\newtheorem{theorem}{Theorem}[section]
\newtheorem{lemma}[theorem]{Lemma}
\newtheorem{corollary}[theorem]{Corollary}
\newtheorem{conjecture}[theorem]{Conjecture}
\theoremstyle{definition}
\newtheorem{definition}[theorem]{Definition}
\theoremstyle{remark}
\numberwithin{equation}{section}
\newcommand{\mmod}[1]{\,\,(\text{mod}\,\,#1)}
\def\bfa{{\mathbf a}}
\def\bfb{{\mathbf b}}
\def\bfc{{\mathbf c}}
\def\bfd{{\mathbf d}}
 \def\bfP{{\mathbf P}}
\def\bfx{{\mathbf x}}
\def\bfy{{\mathbf y}}
\def\calA{{\mathscr A}}  
\def\calB{{\mathscr B}} 
\def\calC{{\mathscr C}} 
\def\calD{{\mathscr D}}
\def\calE{{\mathscr E}}
\def\calF{{\mathcal F}}
\def\calS{{\mathcal S}}
\def\dbC{{\mathbb C}}
\def\dbF{{\mathbb F}}
\def\dbN{{\mathbb N}}  
\def\dbR{{\mathbb R}}
\def\dbZ{{\mathbb Z}}\def\dbQ{{\mathbb Q}}
\def\gra{{\mathfrak a}}
\def\alp{{\alpha}} \def\bfalp{{\boldsymbol \alpha}}
\def\bet{{\beta}}  
\def\gam{{\gamma}} \def\Gam{{\Gamma}}
\def\del{{\delta}}
\def\tet{{\theta}}  
\def\Tet{{\Theta}} 
\def\kap{{\kappa}}
 \def\Lam{{\Lambda}}
\def\Ups{{\Upsilon}} 
 \def\bfvarphi{{\boldsymbol \varphi}}
\def\ome{{\omega}}
\def\eps{\varepsilon}
\def\le{\leqslant} \def\ge{\geqslant}
\def\d{{\,{\rm d}}}
\begin{document}
\title[Condensation and densification]{Condensation and densification for\\ 
sets of large diameter}
\author[Trevor D. Wooley]{Trevor D. Wooley}
\address{Department of Mathematics, Purdue University, 150 N. University Street, West 
Lafayette, IN 47907-2067, USA}
\email{twooley@purdue.edu}
\dedicatory{In memory of Ron Graham}
\subjclass[2010]{11B30, 11B75, 11L15}
\keywords{Condensation, densification, Freiman isomorphism, models.}
\date{}
\begin{abstract} Consider a set of integers $\calA$ having finite diameter $X$, and a system 
of simultaneous polynomial equations to be solved over $\calA$. In many circumstances, it is 
known that the number of solutions of this system is $O(X^\eps |\calA|^\tet)$ for a suitable 
$\tet>0$ and any $\eps>0$. These estimates become worse than trivial when the diameter 
$X$ is very large compared to $|\calA|$, or equivalently, when the set $\calA$ is very sparse. 
This motivates the problem of seeking a new set of integers $\calB$, in a certain sense 
isomorphic to $\calA$, having the property that the diameter $X'$ of $\calB$ is smaller than 
$X$, and at the same time the set $\calB$ preserves the salient features of the solution set 
of the system of equations in question. We report on our speculative investigations 
concerning this problem closely associated with the topic of Freiman homomorphisms.
\end{abstract}
\maketitle

\section{Introduction} Given a system of polynomial equations having integral coefficients, 
the investigation of solution sets with variables restricted to a given finite set of integers 
$\calA$ is of basic interest in arithmetic combinatorics. Even for a fixed system of 
equations, comprehensive knowledge concerning such solution sets seems a goal far too 
ambitious to be realised, for the sets $\calA$ to which variables are restricted may contain 
extraordinarily complicated constellations of arbitrarily large size. In this paper we seek to 
understand such solution sets in terms of related sets of integers, every element of which 
is bounded purely in terms of the cardinality of $\calA$ and the data associated with the 
system of polynomials in question. Thus, in a certain sense, our conclusions derive faithful 
models of solution sets in arithmetic combinatorics. Any model of this type having elements 
of least size might reasonably be interpreted as a minimal model. The interest in such 
models lies in the hope that a minimal model might be more easily understood than a 
non-minimal and potentially very sparse counterpart. There are close parallels with the 
concept of Freiman homomorphisms and isomorphisms (see \cite{Frei1964, Frei1973} and, 
for example, \cite[Definition 5.21]{TV2006}) in the situation wherein these mappings take 
one set of integers to another. Although we comment further on such considerations in due 
course, we emphasise for now the importance for us of remaining within the same ring 
rather than mapping to a finite field. We express the hope that, despite our investigations 
on these matters being rudimentary in nature, they may provide a stimulus for further work.

\par Further discussion requires the introduction of some notation. We are interested 
primarily in finite sets of integers $\calA\subset \dbZ$. We write $A$ for 
$\text{card}(\calA)$. Two notions of the size of the elements of $\calA$ play a role in our 
discussions. First, there is the {\it diameter} of $\calA$, namely
\[
\text{diam}(\calA)=\max \calA -\min \calA +1.
\]
Second, there is the {\it enveloping radius} of $\calA$, by which we mean
\[
\text{env}(\calA)=\max \{ |a|:a\in \calA\}+1.
\]
It is apparent that $\text{diam}(\calA)$ and $\text{env}(\calA)$ provide very crude 
measures of the complexity of the set $\calA$ in wide generality\footnote{The presence of 
the additional term $1$ in these definitions may seem mysterious, but is designed to align 
with a subsequent definition appropriate for the situation in algebraic number fields.}. One 
focus of interest for us concerns translation-dilation invariant (TDI) systems of equations, 
such as the familiar linear equation $x_1+x_2=x_3+x_4$. When considering the solutions of 
such an equation with $\bfx\in \calA^4$, it is apparent that no information concerning the 
solution set is lost if one translates the elements of $\calA$ by a fixed integer $b$ to obtain 
a new set $\calA'=\{ a-b:a\in \calA\}$. Consequently, there is no loss of generality in 
assuming that $\min \calA =0$, and in such circumstances it is more natural to measure the 
complexity of the set $\calA$ by means of its diameter rather than its enveloping radius.

\par The measures $\text{env}(\calA)$ and $\text{diam}(\calA)$ play a critical role in the 
best available upper bounds for certain mean values of additive number theory. For 
example, when $s$ and $k$ are natural numbers and $\calA\subset \dbZ$ is finite, let 
$J_{s,k}(\calA)$ denote the number of solutions of the system of equations
\[
x_1^j+\ldots +x_s^j=x_{s+1}^j+\ldots +x_{2s}^j\quad (1\le j\le k),
\]
with $x_i\in \calA$ $(1\le i\le 2s)$. Likewise, when $\varphi_j\in \dbZ[t]$ $(1\le j\le k)$, 
denote by $J_{s,k}(\calA;\bfvarphi)$ the number of solutions of the system of equations
\[
\varphi_j(x_1)+\ldots +\varphi_j(x_s)=\varphi_j(x_{s+1})+\ldots +\varphi_j(x_{2s})
\quad (1\le j\le k),
\]
with $x_i\in \calA$ $(1\le i\le 2s)$. We begin by recalling a consequence of recent work 
resolving the main conjecture in Vinogradov's mean value theorem.

\begin{theorem}\label{theorem1.1} Let $\calA\subset \dbZ$ be finite.
\begin{enumerate}
\item[(i)] Suppose that $\varphi_j\in \dbZ[t]$ $(1\le j\le k)$ is a system of polynomials with
\[
\text{\rm det}\biggl( \frac{{\rm d}^i\varphi_j(t)}{{\rm d}t^i}\biggr)_{1\le i,j\le k}\ne 0.
\]
Also, let $s$ and $k$ be natural numbers with $s\le k(k+1)/2$. Then for each $\eps>0$, 
one has
\begin{equation}\label{1.1}
J_{s,k}(\calA;\bfvarphi)\ll \text{\rm env}(\calA)^\eps A^s.
\end{equation}
\item[(ii)] For all natural numbers $s$ and $k$, and each $\eps>0$, one has
\begin{equation}\label{1.2}
J_{s,k}(\calA)\ll \text{\rm diam}(\calA)^\eps (A^s+A^{2s-k(k+1)/2}).
\end{equation}
\end{enumerate}
In each asymptotic bound, the constants implicit in Vinogradov's notation $\ll$ may depend 
on $\eps$, $s$, $k$ and the coefficients of $\bfvarphi$.
\end{theorem}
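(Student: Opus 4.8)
The plan is to obtain both parts from the $\ell^2$-decoupling inequality at its critical exponent, supplemented by an interpolation argument that passes from that single exponent to all admissible $s$; for part~(ii) one exploits in addition the translation-invariance of the Vinogradov system, which is precisely what permits the use of the diameter in place of the enveloping radius.

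For part~(ii), since the system $x_1^j+\ldots+x_s^j=x_{s+1}^j+\ldots+x_{2s}^j$ $(1\le j\le k)$ is invariant under $x\mapsto x+b$, one has $J_{s,k}(\calA)=J_{s,k}(\calA-\min\calA+1)$, and I would assume at once that $\calA\subseteq\{1,\ldots,X\}$ with $X=\text{diam}(\calA)$. Put $f(\bfalp)=\sum_{a\in\calA}e(\alpha_1a+\ldots+\alpha_ka^k)$, so that $J_{s,k}(\calA)=\int_{[0,1)^k}|f(\bfalp)|^{2s}\d\bfalp$. The key input is the $\ell^2$-decoupling theorem of Bourgain, Demeter and Guth for the moment curve $(t,t^2,\ldots,t^k)$, in its exponential-sum form: for each $\eps>0$, every choice of coefficients $(c_a)_{1\le a\le X}$, and every $p$ with $2\le p\le k(k+1)$,
\[
\int_{[0,1)^k}\Bigl|\sum_{a=1}^Xc_ae(\alpha_1a+\ldots+\alpha_ka^k)\Bigr|^p\d\bfalp\ll_\eps X^\eps\Bigl(\sum_{a=1}^X|c_a|^2\Bigr)^{p/2}.
\]
Specialising to $c_a=1$ for $a\in\calA$ and $c_a=0$ otherwise, with $p=k(k+1)$, yields $\|f\|_{k(k+1)}^{k(k+1)}\ll_\eps X^\eps A^{k(k+1)/2}$. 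Interpolating against the trivial bound $|f(\bfalp)|\le A$ then covers the remaining ranges of $s$: for $s\le k(k+1)/2$, nesting of $L^p$-norms over the torus gives $\|f\|_{2s}^{2s}\le\|f\|_{k(k+1)}^{2s}\ll_\eps X^\eps A^{s}$, while for $s\ge k(k+1)/2$ one has $\|f\|_{2s}^{2s}\le A^{2s-k(k+1)}\|f\|_{k(k+1)}^{k(k+1)}\ll_\eps X^\eps A^{2s-k(k+1)/2}$. Together these yield $J_{s,k}(\calA)\ll_\eps X^\eps(A^s+A^{2s-k(k+1)/2})$, which is \eqref{1.2}.

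For part~(i) the system need not be translation-dilation invariant, so I would work with $\calA\subseteq[-Y,Y]$, $Y=\text{env}(\calA)-1$. One first performs rational row operations on the system $\sum_i\varphi_j(x_i)=\sum_i\varphi_j(x_{s+i})$ $(1\le j\le k)$: these preserve both $J_{s,k}(\calA;\bfvarphi)$ and the non-vanishing of the Wronskian-type determinant in the hypothesis, and since any finite-dimensional space of polynomials has a basis of distinct degrees, one may assume $1\le\deg\varphi_1<\deg\varphi_2<\ldots<\deg\varphi_k$. If in fact $\deg\varphi_k\le k$ then the row-reduced system is literally the Vinogradov system and part~(i) reduces to part~(ii); the substance of part~(i) is thus the case of higher degrees. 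In general, the hypothesis says precisely that the polynomial curve $\gam(t)=(\varphi_1(t),\ldots,\varphi_k(t))$ has torsion vanishing at only finitely many real points, i.e.\ is non-degenerate away from a bounded set, and for such curves the analogue of the displayed inequality is available (decoupling for non-degenerate curves, from the work of Bourgain--Demeter and its refinements), with $\{1,\ldots,X\}$ replaced by $\dbZ\cap[-Y,Y]$ and $X^\eps$ by $Y^\eps$. Applying this to $f(\bfalp)=\sum_{a\in\calA}e(\alpha_1\varphi_1(a)+\ldots+\alpha_k\varphi_k(a))$ with $p=k(k+1)$ gives $\|f\|_{k(k+1)}^{k(k+1)}\ll_\eps Y^\eps A^{k(k+1)/2}$, and since $s\le k(k+1)/2$ the nesting of norms yields $J_{s,k}(\calA;\bfvarphi)=\|f\|_{2s}^{2s}\ll_\eps Y^\eps A^{s}\ll_\eps\text{env}(\calA)^\eps A^s$, which is \eqref{1.1}.

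The \textbf{main obstacle} is the last decoupling step in part~(i): one must ensure the implied constant does not depend on $Y$, whereas the naive rescaling $t=Yu$ replaces $\gam$ by $u\mapsto\gam(Yu)$, a curve whose shape — and hence whose decoupling constant — could degenerate as $Y\to\infty$. The route around this that I envisage is to decompose $[-Y,Y]$ into dyadic blocks; on the block $[2^m,2^{m+1}]$ one rescales by $2^m$ and renormalises each coordinate by the appropriate power of $2^m$, and, precisely because the degrees $d_j=\deg\varphi_j$ are now distinct, the renormalised curve converges in $C^\infty([1,2])$, as $m\to\infty$, to the non-degenerate curve $u\mapsto(u^{d_1},\ldots,u^{d_k})$. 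The stability of decoupling constants under small perturbations of the curve then furnishes a bound $\ll_\eps 2^{m\eps}$ on each block, uniformly in $m$; the $O(1)$-length intervals around the finitely many torsion-zeros — each containing only $O(1)$ integers — are split off by Cauchy--Schwarz, and the $O(\log Y)$ blocks are reassembled by the triangle inequality in $L^{k(k+1)}$ at a cost of only $\log Y\ll_\eps Y^\eps$. Making this uniformisation rigorous is the one point where work beyond quoting the decoupling theorems is genuinely needed; alternatively one could run the efficient-congruencing method throughout, as it operates directly in the discrete setting with arbitrary coefficients.
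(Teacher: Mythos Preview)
Your treatment of part~(ii) matches the paper's: translate so that $\calA\subseteq\{1,\ldots,X\}$ with $X=\text{diam}(\calA)$, apply the critical-exponent estimate to the weighted exponential sum, and interpolate for the two ranges of $s$. The paper derives the critical estimate from part~(i) specialised to $\varphi_j(t)=t^j$ rather than citing Bourgain--Demeter--Guth directly, but this is a cosmetic difference.

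For part~(i) you take a genuinely different and harder route. The paper simply invokes \cite[Theorem~1.1]{Woo2019}, which already furnishes the weighted $\ell^2$ inequality
\[
\int_{[0,1)^k}\biggl|\sum_{|n|\le X}\gra_ne(\alp_1\varphi_1(n)+\ldots+\alp_k\varphi_k(n))\biggr|^{2s}\d\bfalp\ll X^\eps\biggl(\sum_{|n|\le X}|\gra_n|^2\biggr)^s
\]
for \emph{arbitrary} integer-coefficient systems $\bfvarphi$ satisfying the Wronskian hypothesis, with implicit constant depending only on $\eps,s,k$ and the coefficients of $\bfvarphi$. Setting $X=\text{env}(\calA)$ and $\gra_n$ the indicator of $\calA$ gives \eqref{1.1} in one line. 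No row reduction, no dyadic decomposition, no perturbation-stability argument is needed, because the efficient-congruencing method operates directly on the discrete sum and the uniformity in $X$ is built in. The obstacle you correctly identify --- that rescaling $t=Yu$ produces a family of curves whose decoupling constants might degenerate --- is real for the harmonic-analytic route, and your dyadic-block remedy is the standard one; but it is work you would actually have to carry out, and it is entirely avoided by the citation the paper uses. Your closing remark, that ``alternatively one could run the efficient-congruencing method throughout'', is precisely what the paper does.
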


Both of the conclusions of Theorem \ref{theorem1.1} are immediate consequences of 
Wooley \cite[Theorem 1.1]{Woo2019}, as we explain in \S9 below, and the conclusion (ii) is 
also immediate from the work of Bourgain, Demeter and Guth \cite{BDG2016}. The 
motivating observation we wish to highlight here is that both estimates (\ref{1.1}) and 
(\ref{1.2}) are worse than trivial when the set $\calA$ is extremely sparse. Suppose, for 
example, that
\[
\text{diam}(\calA)\asymp \exp (\exp (A)).
\]
Then the estimates (\ref{1.1}) and (\ref{1.2}) are inferior to the trivial 
bounds $J_{s,k}(\calA;\bfvarphi)\le A^{2s}$ and $J_{s,k}(\calA)\le A^{2s}$. This 
observation remains valid for the improved estimates for $J_{3,2}(\calA)$ and 
$J_{6,3}(\calA)$ made available, respectively in the very recent work reported in 
\cite{GLY2021, GMW2020} and \cite{Sch2023}. It seems reasonable to speculate that the 
extremal situation is that in which $\calA$ consists of $A$ consecutive integers.

\begin{conjecture}\label{conjecture1.2} Suppose that $\calA\subset \dbZ$ is finite and 
$s,k\in \dbN$. Then
\[
J_{s,k}(\calA)\le J_{s,k}(\{1,2,\ldots ,A\}).
\]
Moreover, for each $\eps>0$, one has
\begin{equation}\label{1.4}
J_{s,k}(\calA)\ll_{\eps,s,k} A^{s+\eps}+A^{2s-k(k+1)/2}.
\end{equation}
\end{conjecture}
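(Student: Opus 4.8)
The plan is to establish the two assertions of Conjecture~\ref{conjecture1.2} separately, since the first (the claim that consecutive integers are extremal) is genuinely deep, whereas the second (the explicit upper bound) follows comparatively cheaply from a condensation argument combined with Theorem~\ref{theorem1.1}(ii). I would begin with the second assertion. The key point is that the quantity $J_{s,k}(\calA)$ counts solutions of a TDI system, and is therefore a Freiman-type invariant of sufficiently high order: if $\calB$ is a set of integers that is Freiman-isomorphic to $\calA$ of order at least $2s$ with respect to the relevant polynomial data, then $J_{s,k}(\calB)=J_{s,k}(\calA)$. The first step is thus to produce, for every finite $\calA\subset\dbZ$ with $|\calA|=A$, a model $\calB$ preserving $J_{s,k}$ and having $\text{diam}(\calB)\ll_{s,k}1$; more precisely one wants $\text{diam}(\calB)$ bounded by a quantity depending only on $A$, $s$, $k$ (this is exactly the ``minimal model'' philosophy advertised in the introduction, and is presumably the content of results proved later in the paper). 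Granting such a condensation statement, one applies \eqref{1.2} to $\calB$: since $\text{diam}(\calB)^\eps\ll_{\eps,s,k}A^{\eps'}$ for any prescribed $\eps'>0$ after relabelling, one obtains $J_{s,k}(\calA)=J_{s,k}(\calB)\ll_{\eps,s,k}A^{s+\eps}+A^{2s-k(k+1)/2}$, which is precisely \eqref{1.4}. So the second half reduces entirely to the existence of a diameter-condensing Freiman model of order $2s$, which I would invoke from the earlier sections of the paper.

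For the first assertion, $J_{s,k}(\calA)\le J_{s,k}(\{1,\dots,A\})$, I would first reduce to the case where the model $\calB$ above is used, so that without loss of generality $\calA$ has bounded diameter and hence lies in $\{1,\dots,D\}$ for some $D=D(A,s,k)$; but this still does not immediately give the sharp constant, so a more structural argument is needed. The natural approach is a compression/rearrangement argument in the spirit of extremal set theory: show that among all $A$-element sets $\calA$ contained in a fixed interval $\{1,\dots,N\}$, the count $J_{s,k}(\calA)$ is maximised by an arithmetic-progression-like configuration, and then that the common difference can be taken to be $1$ by the translation-dilation invariance (dilating by $d$ is a Freiman isomorphism preserving $J_{s,k}$, so an AP of length $A$ has the same count as $\{1,\dots,A\}$). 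The mechanism would be to take an extremal $\calA$, locate a ``gap'' $a<a'$ consecutive elements of $\calA$ with $a'-a\ge 2$, and exhibit a compression move $\calA\mapsto\calA''$ that closes the gap (or shifts an element inward) without decreasing $J_{s,k}$; iterating drives $\calA$ to an interval. Verifying the monotonicity of $J_{s,k}$ under such a move is the crux: one must compare the solution sets of the Vinogradov-type system before and after, matching solutions by a local modification and arguing that no solutions are lost.

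The main obstacle, as I see it, is precisely this compression monotonicity for the first assertion. Unlike classical compression arguments for $L^2$-type multiplicative or additive energies (where one can often pair up lost and gained solutions by an explicit involution), here the constraint is a full $k$-dimensional system of equations of degrees $1$ through $k$, and a local move at one element interacts globally with all solutions using that element. It is not at all clear that a single-element compression preserves the count; one may instead need a global rearrangement (e.g.\ a Steiner-type symmetrisation across many coordinates simultaneously) or a clever use of the generating-function / Fourier representation $J_{s,k}(\calA)=\int_{[0,1)^k}|f_\calA(\bfalp)|^{2s}\,\d\bfalp$ with $f_\calA(\bfalp)=\sum_{a\in\calA}e(\alpha_1 a+\dots+\alpha_k a^k)$, reducing the claim to an $L^{2s}$-norm comparison that one would hope to settle by a majorisation inequality among the relevant exponential sums. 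I expect that controlling this comparison in general is hard enough that the paper may only prove the first assertion conditionally, or in special cases (small $s$ or $k$), while the unconditional part of the theorem is the explicit bound \eqref{1.4} via condensation plus Theorem~\ref{theorem1.1}(ii). Accordingly I would present the condensation-based proof of \eqref{1.4} in full and flag the extremality claim as the point where a new idea — most plausibly a sharp exponential-sum majorisation or a multi-coordinate symmetrisation — is required.
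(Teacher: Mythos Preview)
The statement you are attempting to prove is labeled in the paper as a \emph{conjecture}, not a theorem, and the paper does not prove it. Indeed, the paper says explicitly at the end of \S1 that ``our conclusions do not achieve the level whereby application to Conjecture~\ref{conjecture1.2} can reasonably be envisioned.'' So there is no ``paper's own proof'' to compare against; the task is really to assess whether your sketch actually closes the gap the paper leaves open. It does not.

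Your argument for \eqref{1.4} hinges on the step ``$\text{diam}(\calB)^\eps\ll_{\eps,s,k}A^{\eps'}$ for any prescribed $\eps'>0$ after relabelling.'' This requires the condensed model $\calB$ to have diameter bounded \emph{polynomially} in $A$, i.e.\ $\text{diam}(\calB)\le A^{c(s,k)}$ for some constant $c(s,k)$. The paper addresses exactly this point in the introduction: ``Were one to have the upper bound $\text{diam}(\calC)\le A^c$, for some fixed $c>0$, for example, then the conjecture \eqref{1.4} would follow from Theorem~\ref{theorem1.1}(ii). Although such cannot be true in general\ldots.'' And the concrete condensation results actually established are far from polynomial: Theorem~\ref{theorem2.4} gives only an ineffective $O_{A,\bfP}(1)$; for homogeneous \emph{linear} systems Theorem~\ref{theorem3.2} gives $\text{env}^*\le (\Lam+1)^A$, which is already exponential in $A$ (and the Konyagin--Lev example at the end of \S3 shows an exponential lower bound is unavoidable even for $x_1+x_2=x_3+x_4$); for the non-linear Vinogradov system with $k\ge 2$ the relevant bound is Corollary~\ref{corollary5.7}, which yields $\text{Env}(\calB)\le \exp_4(c_1A)$ in a number field of degree up to $(2t)^{2^A}$. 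Plugging $\text{diam}(\calB)\asymp\exp_4(c_1A)$ into \eqref{1.2} gives $(\exp_4(c_1A))^\eps$ on the right-hand side, which is astronomically larger than any power of $A$, so nothing is gained. In short, the ``granting such a condensation statement'' clause in your argument is granting precisely the thing the paper cannot supply and, in the linear subcase, provably does not exist.

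Your treatment of the first assertion (extremality of $\{1,\ldots,A\}$) is appropriately hedged: you correctly identify it as the harder claim and propose a compression/symmetrisation strategy while flagging that the monotonicity step is unproved. That part is an honest proof \emph{sketch} of an open problem and is fine as such, but it is not a proof either.
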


By elimination, one finds that the bound $J_{s,k}(\calA)\ll A^s+A^{2s-k}$ is essentially 
trivial. When $k\ge 2$, this estimate remains very far from that asserted in Conjecture 
\ref{conjecture1.2}. The only other non-trivial bound of which the author is aware is an 
estimate very slightly stronger than $J_{s,2}(\calA)\ll A^{2s-3+2^{2-s}}$ $(s\ge 3)$ due to 
Mudgal \cite[Theorem 1.1]{Mud2023}. With progress towards Conjecture 
\ref{conjecture1.2} in mind, it would be desirable to have available a set $\calC$ associated 
with a sparse set $\calA$ having the property that
\[
J_{s,k}(\calA)=J_{s,k}(\calC),
\]
or even merely
\[
J_{s,k}(\calA)\ll J_{s,k}(\calC),
\]
and, moreover, having much smaller diameter than $\calA$. Were one to have the upper 
bound $\text{diam}(\calC)\le A^c$, for some fixed $c>0$, for example, then the conjecture 
(\ref{1.4}) would follow from Theorem \ref{theorem1.1}(ii). Although such cannot be true 
in general, one is led to the broader problem of determining the extent to which such 
compressions might be achieved in practice. This problem concerning {\it condensations} is 
formalised in \S2, and explored in \S\S3, 4 and 5. We direct the reader to Theorem 
\ref{theorem5.6} for our most general conclusions concerning condensations associated with 
systems of polynomial equations. Write $c_1$ and $c_2$ for suitable positive constants. 
Then a very rough idea of these conclusions can be surmised if we note, first, that we are 
forced to work in a number field of degree as large as $\exp_2(c_1A)$, and second, that our 
condensations contain elements roughly of size $\exp_4(c_2A)$. Here, and throughout, we 
use $\exp_m(\cdot)$ to denote the $m$-fold iterated exponential function. Thus
\[
\exp_1(x)=e^x,\quad \exp_2(x)=e^{e^x},
\]
and so on.\par

An alternate strategy for obtaining bounds of the shape (\ref{1.4}) has a very different 
flavour. One might surmise that the difficulty in applying Theorem \ref{theorem1.1} to 
establish the estimate (\ref{1.4}) of Conjecture \ref{1.2} stems from the awkward nature 
of ultra-sparse sets $\calA$ having very large diameter compared to their cardinality $A$. 
One might therefore seek to obtain a much denser set $\calD$ associated with $\calA$, 
having the property that for some large integer $N$ one has
\[
\text{card}(\calD)=\left( \text{card}(\calA)\right)^N\quad \text{and}\quad J_{s,k}(\calA)
=J_{s,k}(\calD)^{1/N},
\]
while at the same time $\text{diam}(\calD)$ is not much larger than $\text{diam}(\calA)$. 
This set $\calD$ would be a much denser analogue of $\calA$ with the potential that 
$\text{diam}(\calD)\le \left( \text{card}(\calD)\right)^c$, for some fixed $c>0$. In these 
circumstances, the conjectured estimate (\ref{1.4}) would again follow from Theorem 
\ref{theorem1.1}(ii). We formalise this problem of {\it densification} in \S6 and explore it in 
\S7.\par

It seems worth remarking that the concepts of condensation and densification possess 
interpretations also in the scenario wherein sets of integers are replaced by finite sets of 
real numbers, or even finite subsets of a characteristic zero integral domain. We make 
some remarks in this direction in \S8.\par

We view both the strategies of condensation and densification of sets of large diameter as 
being of interest in their own right. We emphasise that our conclusions do not achieve the 
level whereby application to Conjecture \ref{conjecture1.2} can reasonably be envisioned.
\par

\noindent {\bf Acknowledgements:} This work was supported in its initial phases by a 
European Research Council Advanced Grant under the European Union's Horizon 2020 
research and innovation programme via grant agreement No.~695223. The bulk of the 
work reported here was obtained while the author was supported by the National Science 
Foundation via Grant No.~DMS-1854398 and DMS-2001549. The author wishes to express 
his gratitude to Julia Wolf for some early discussions on the topic of this paper, and to Ben 
Barber for discussion concerning an idea that led to Theorem \ref{theorem2.4}.\par

We write $X\asymp Y$ when, in Vinogradov's notation, we have $X\ll Y\ll X$. Also, when 
$\tet$ is a real number, we write $\lceil \tet \rceil$ for the least integer $m$ with 
$m\ge \tet$, and likewise $\lfloor \tet\rfloor$ for the largest integer $m$ with $m\le \tet$. 
In addition, we write $\|\tet\|$ for $\min \{ |\tet-m|:m\in \dbZ\}$. Finally, we make 
frequent use of vector notation in the form $\bfx=(x_1,\ldots,x_r)$. Here, the dimension 
$r$ depends on the course of the argument.\par
 
\section{Condensations of sets} The informal introduction of condensations in \S1 provides 
a framework insufficient for the more serious discussion on which we now embark. We 
begin by introducing a notion generalising that of a Freiman isomorphism.

\begin{definition}\label{definition2.1} Let $\calA$ and $\calB$ be finite sets of integers, 
and suppose that we are given polynomials $P_1,\ldots ,P_r\in \dbZ[x_1,\ldots ,x_s]$. We 
say that a bijection $\psi:\calA\rightarrow \calB$ is a {\it Freiman $\bfP$-isomorphism} if, 
whenever $(x_1,\ldots ,x_s)\in \calA^s$, then
\[
P_i(x_1,\ldots ,x_s)=0\quad (1\le i\le r)
\]
if and only if
\[
P_i(\psi(x_1),\ldots ,\psi(x_s))=0\quad (1\le i\le r).
\]
\end{definition}

We emphasise here that a Freiman $\bfP$-isomorphism is specific to a particular polynomial 
tuple $\bfP$, since our focus will lie on the solution set of a fixed polynomial system. This is 
in contrast with a similar definition given in work of Grosu (see the preamble to the 
statement of \cite[Theorem 1.3]{Gro2014}). Moreover, also in contrast to the latter and 
indeed other sources concerning Freiman isomorphisms, we shall only be interested in 
situations wherein both $\calA$ and $\calB$ lie in the same ring. This restriction permits 
iterative approaches in which one composes a sequence of Freiman $\bfP$-isomorphisms 
$\psi_1,\psi_2,\ldots ,\psi_n$ to obtain a new Freiman $\bfP$-isomorphism $\psi_n\circ 
\psi_{n-1}\circ \ldots \circ \psi_1$.\par

A few words of explanation seem warranted concerning our interest in Freiman 
$\bfP$-isomorphisms. We are interested in the structure of the solutions of the system of 
polynomials
\begin{equation}\label{2.1}
P_i(x_1,\ldots ,x_s)=0\quad (1\le i\le r),
\end{equation}
with $\bfx\in \calA^s$. This is described precisely by the hypergraph $\Gam(\calA;\bfP)$ 
with the elements of $\calA$ as vertices, and having hyperedges defined by the $s$-tuples 
$\bfx$ from $\calA^s$ satisfying the system of equations (\ref{2.1}). With this 
characterisation of the structure of the solution set of (\ref{2.1}) in mind, it is apparent 
that the mapping
\[
\Psi: \Gam(\calA;\bfP)\rightarrow \Gam(\calB;\bfP),
\]
induced by a Freiman $\bfP$-isomorphism $\psi:\calA \rightarrow \calB$, delivers a bijection 
that preserves every feature of the solution set of (\ref{2.1}) as one replaces $\calA$ by 
$\calB=\psi(\calA)$.\par

Given a finite set of integers $\calA$ and a system of polynomials $\bfP\in \dbZ[\bfx]^r$, 
our interest lies in finding a set $\calB$ Freiman $\bfP$-isomorphic to $\calA$ with $\calB$ 
having elements intrinsically smaller than those of $\calA$. Since $\Gam(\calB;\bfP)$ is in 
bijective correspondence with $\Gam(\calA;\bfP)$, one may expect that the salient features 
of the solution structure of the system (\ref{2.1}) with $\bfx\in \calA^s$ may be more easily 
determined by instead considering solutions $\bfx\in \calB^s$. This motivates the next 
definition.

\begin{definition}\label{definition2.2} We say that a mapping $\psi:\calA\rightarrow \calC$ 
is a {\it $\bfP$-condenser of $\calA$} if it is a Freiman $\bfP$-isomorphism having the 
property that $\text{env}(\calC)\le \text{env}(\calA)$. When the latter inequality is strict, 
we refer to $\psi$ as a {\it strict $\bfP$-condenser of $\calA$}. In either case, we refer to 
$\calC$ as being a {\it $\bfP$-condensation of $\calA$}.
\end{definition}

We make an observation here concerning TDI systems of polynomials $\bfP$. Suppose that 
$\min \calA =b$. Then by considering the mapping $\psi:\calA\rightarrow \dbZ$ defined by 
$a\mapsto a-b$, we see that $\calA$ possesses a $\bfP$-condensation $\calB$ with 
$\text{env}(\calB)=\text{diam}(\calB)$.\par

Of particular interest are the $\bfP$-condensations $\calB$ of $\calA$ distinguished by the 
property that $\text{env}(\calB)$ is minimal.

\begin{definition}\label{definition2.3} The {\it $\bfP$-essential enveloping radius} of 
$\calA$ is
\[
\text{env}^*(\calA;\bfP)=\min \{ \text{env}(\psi(\calA)):\text{$\psi$ is a $\bfP$-condenser 
of $\calA$}\},
\]
and the {\it $\bfP$-essential diameter of $\calA$} is
\[
\text{diam}^*(\calA;\bfP)=\min \{ \text{diam}(\psi(\calA)):\text{$\psi$ is a 
$\bfP$-condenser of $\calA$}\} .
\]
\end{definition}

The notion of the $\bfP$-essential enveloping radius of a finite set $\calA\subset \dbZ$ 
provides a measure of the complexity of $\calA$ with respect to the system of equations 
(\ref{2.1}), for it describes the minimal footprint of a set $\calB$ for which the hypergraph 
$\Gam(\calB;\bfP)$ associated with the solution set faithfully describes that of interest, 
namely $\Gam(\calA;\bfP)$.\par

In general, the sharpest conclusions concerning $\text{env}^*(\calA;\bfP)$ of which the 
author is aware are the trivial ones recorded in the following theorem.

\begin{theorem}\label{theorem2.4} Let $\bfP\in \dbZ[x_1,\ldots ,x_s]^r$ be a polynomial 
system, and let $\calA\subset \dbZ$ be a finite set of integers having cardinality $A$. Then 
one has
\[
\lceil (A+1)/2\rceil \le \text{\rm env}^*(\calA;\bfP)\ll_{A,\bfP} 1
\]
and
\[
A\le \text{\rm diam}^*(\calA;\bfP)\ll_{A,\bfP} 1.
\]
\end{theorem}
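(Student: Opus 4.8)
The plan is to establish four inequalities: two lower bounds and two upper bounds, the upper bounds being the substantive (if still elementary) content. The lower bounds come essentially for free. For any $\bfP$-condenser $\psi:\calA\to\calC$, the set $\calC$ has cardinality $A$ since $\psi$ is a bijection; a set of $A$ distinct integers has diameter at least $A$, giving $\text{diam}^*(\calA;\bfP)\ge A$, and similarly among $A$ distinct integers at least $\lceil (A+1)/2\rceil$ of them have the same sign (counting $0$ with either), so the largest absolute value is at least $\lceil (A+1)/2\rceil-1$, whence $\text{env}(\calC)\ge \lceil(A+1)/2\rceil$. This handles both lower bounds.

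For the upper bounds, the key point is that whether a given $s$-tuple $\bfx\in\calA^s$ satisfies the system \eqref{2.1} depends only on the \emph{order type} of the tuple together with the (finite) pattern of which coordinates are equal and, more importantly, that there are only finitely many distinct $s$-tuples to check: precisely $A^s$ of them. First I would record that the property of being a Freiman $\bfP$-isomorphism is determined by a finite amount of data, namely the partition of $\calA^s$ into those tuples satisfying \eqref{2.1} and those that do not. Then I would argue that the collection of all sets $\calB\subset\dbZ$ of cardinality $A$ that are Freiman $\bfP$-isomorphic to $\calA$ via \emph{some} bijection is nonempty (it contains $\calA$ itself) and that among all such $\calB$ there is one minimising $\text{env}$ (respectively $\text{diam}$), since $\text{env}$ and $\text{diam}$ take values in the positive integers and are bounded below. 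This minimiser realises $\text{env}^*(\calA;\bfP)$, respectively $\text{diam}^*(\calA;\bfP)$, and the minimiser depends only on $A$ and $\bfP$ through the finite combinatorial data above.

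To turn this into an explicit bound $\ll_{A,\bfP}1$, I would bound the size of the smallest isomorphic copy directly. Label $\calA=\{a_1<\cdots<a_A\}$ and seek $\calB=\{b_1<\cdots<b_A\}$ with $b_i\in\dbZ$ that is Freiman $\bfP$-isomorphic to $\calA$. The constraints "$P_i(a_{j_1},\ldots,a_{j_s})=0$" that hold must be preserved and those that fail must continue to fail; the former are finitely many polynomial equations in the unknowns $b_1,\ldots,b_A$, and the latter finitely many polynomial non-equations (open conditions). Since the system has the known solution $(b_1,\ldots,b_A)=(a_1,\ldots,a_A)$, the variety cut out by the equalities is nonempty; one then uses a quantitative form of the fact that a nonempty integral solution set of a system of polynomial equations, if it contains \emph{any} integer point avoiding a given finite list of polynomial non-vanishing conditions, contains such a point of height bounded in terms of the degrees and number of the polynomials and the number of variables $A$ — a standard effective-Nullstellensatz / Bézout-type bound. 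Alternatively, and more cleanly, I expect the author invokes the Lang–Weil or a pigeonhole argument over residues, or simply cites that "there are only finitely many Freiman $\bfP$-isomorphism types of $A$-element sets" so the minimum of $\text{env}$ over the (finite, after an obvious normalisation $\min\calB=0$) list of representatives is a well-defined quantity depending on $A$ and $\bfP$ alone.

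The main obstacle is making the upper bound genuinely \emph{effective} rather than merely a finiteness statement: a priori the set of $\dbZ$-points of the equality variety that avoid the non-equalities could be infinite and one must extract a bounded-height member. The cleanest route is to observe that the equivalence class is finite \emph{up to translation and dilation by a positive integer} is \emph{not} quite true (dilation can destroy the non-equalities or create spurious equalities), so instead I would fix the order type $b_1<\cdots<b_A$ and the "difference pattern", and argue that once the partition of $\calA^s$ induced by \eqref{2.1} is fixed, the feasible region for $(b_2-b_1,\ldots,b_A-b_{A-1})\in\dbZ_{>0}^{A-1}$ is a nonempty set defined by finitely many polynomial conditions and hence, being a nonempty subset of a lattice cut out by bounded-degree polynomials, contains a point of size $O_{A,\bfP}(1)$ by a routine effective bound (e.g. via resultants eliminating variables one at a time). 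Translating so that $\min\calB=0$ then yields $\text{diam}(\calB)=\text{env}(\calB)\ll_{A,\bfP}1$, which dominates both $\text{env}^*(\calA;\bfP)$ and $\text{diam}^*(\calA;\bfP)$ and completes the proof.
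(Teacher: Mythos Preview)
Your lower-bound arguments are correct and are essentially what the paper intends (the paper is terser, simply noting that any condensation has $A$ distinct integers). For the upper bounds, your second paragraph already contains exactly the paper's proof: the Freiman $\bfP$-isomorphism class of $\calA$ is determined by the hypergraph $\Gam(\calA;\bfP)$ on $A$ vertices with hyperedges given by $s$-tuples; there are only finitely many such hypergraphs (a number depending only on $A$ and $s$); for each hypergraph type that is realised by some integer set, choose one representative of minimal enveloping radius; the maximum of these minima over the finitely many types is then the desired constant depending only on $A$ and $\bfP$.

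Your third and fourth paragraphs, and the ``main obstacle'' you identify, stem from a misreading of what is being claimed. The theorem asserts only $\text{env}^*(\calA;\bfP)\ll_{A,\bfP}1$, which is precisely the finiteness statement you already gave---no effective bound is required. The paper itself remarks immediately after the statement that ``a few moments of reflection should disabuse the puzzled reader that this conclusion might be in any sense non-trivial.'' So the effective-Nullstellensatz and resultant machinery you sketch is unnecessary here (and, as you correctly sense, would take genuine work to make rigorous: a nonempty set of integer points defined by polynomial equalities together with polynomial non-equalities need not contain a point of bounded height without additional argument). Drop those two paragraphs and your proof coincides with the paper's.
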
 

We emphasise here that the upper bounds recorded in this theorem indicate that the 
$\bfP$-essential enveloping radius (respectively, the $\bfP$-essential diameter) of $\calA$ 
depends at most on $A=\text{card}(\calA)$ and the polynomials comprising $\bfP$, but not 
on the specific identity of the elements of $\calA$. A few moments of reflection should 
disabuse the puzzled reader that this conclusion might be in any sense non-trivial.

\begin{proof}[The proof of Theorem \ref{theorem2.4}] The solution set of the polynomial 
system 
\[
P_i(x_1,\ldots ,x_s)=0\quad (1\le i\le r),
\]
with $\bfx\in \calA^s$, defines the hypergraph $\Gam(\calA;\bfP)$. Let $\calC\subset \dbZ$ 
be any set of integers of cardinality $A$ having smallest enveloping radius for which 
$\Gam(\calC;\bfP)$ is isomorphic to $\Gam(\calA;\bfP)$. Denote by $\psi$ the mapping 
from $\calA$ to $\calC$ induced by this hypergraph isomorphism, and note that one 
possibility is that $\psi$ is the identity mapping. The definitions of $\Gam(\calC;\bfP)$ and 
$\Gam(\calA;\bfP)$ ensure that $\psi:\calA\rightarrow \calC$ is a bijection satisfying the 
property that whenever $(x_1,\ldots ,x_s)\in \calA^s$, then
\[
P_i(x_1,\ldots ,x_s)=0\quad (1\le i\le r)
\]
if and only if
\[ 
P_i(\psi(x_1),\ldots ,\psi(x_s))=0\quad (1\le i\le r).
\]
Hence, we see that $\psi$ is a Freiman $\bfP$-isomorphism and also a $\bfP$-condenser 
of $\calA$ with $\calC=\psi(\calA)$.\par

The set of all hypergraphs on $A$ vertices with hyperedges defined by $s$-tuples of 
vertices is finite in number. Indeed, the number of such hypergraphs depends at most on 
$s$ and $A$. Thus, since $\calC$ depends at most on the hypergraph isomorphism class 
of $\Gam(\calA;\bfP)$ and the polynomial system $\bfP$, one sees that $\text{env}(\calC)$ 
depends at most on $s$, $\bfP$ and $A$. Since $\calC=\psi(\calA)$ with $\psi$ a 
$\bfP$-condenser of $\calA$, it follows that $\text{env}^*(\calA;\bfP)\ll_{A,\bfP}1$. A 
similar conclusion is apparent also for $\text{diam}^*(\calA;\bfP)$ by arguing mutatis 
mutandis.\par

The lower bounds $\text{env}^*(\calA;\bfP)\ge \lceil (A+1)/2\rceil$ and 
$\text{diam}^*(\calA;\bfP)\ge A$ follow by considering sets $\calA$ containing $A$ 
consecutive integers.
\end{proof}

\section{Condensations for linear systems of equations} There is one class of polynomial 
systems for which the quantitative aspects of condensations are explicit, and for which the 
underlying methods possess familiar themes. Thus, the analysis of systems of linear 
polynomials $\bfP(\bfx)$ is both simple and instructive, and serves as a warm-up for the 
analysis of the next two sections concerning polynomial systems. We focus in this section on 
such linear systems in order to motivate the more general discussion of the next section.

\par In order to fix ideas, suppose that $s\ge 2$, $r\ge 1$, and for $1\le i\le r$ fix 
$b_i\in \dbZ$ and $c_{ij}\in \dbZ$ $(1\le j\le s)$. We ignore the trivial situation in which for 
some index $i$ one has $c_{ij}=0$ for $1\le j\le s$, since this will correspond to a case in 
which $r$ is smaller. The system of polynomials of interest to us in this section is
\[
P_i(\bfx)=\sum_{j=1}^sc_{ij}x_j-b_i\quad (1\le i\le r).
\]
When $\calA\subset \dbZ$ is a finite set of integers, we write $S(\calA;\bfP)$ for the set of 
solutions of the system of equations $P_i(\bfx)=0$ $(1\le i\le r)$, with $\bfx\in \calA^s$. 
Also, we define the integer $\Lam=\Lam (\bfc,\bfb)$ by putting
\[
\Lam =\max_{1\le i\le r}\biggl( |b_i|+\sum_{j=1}^s|c_{ij}|\biggr).
\]
Thus, the quantity $\Lam$ provides a measure of the height of the coefficient matrix 
defining $\bfP$. Finally, it is convenient both here and elsewhere to introduce an integer 
that encapsulates both distinctness of the elements of $\calA$, and also whether or not an 
$s$-tuple $\bfa$ lies in $S(\calA;\bfP)$. Thus, we define
\begin{equation}\label{3.1}
\Ups=\Biggl( \prod_{\substack{a_1,a_2\in \calA\\ a_1\ne a_2}}|a_1-a_2|\Biggr) 
\Biggl( \prod_{\substack{\bfa\in \calA^s\\ \bfa\not\in S(\calA;\bfP)}}
\sum_{i=1}^r|P_i(\bfa)|\Biggr) .
\end{equation}

\begin{theorem}\label{theorem3.1} Consider a system $\bfP$ of linear polynomials as 
described in the preamble, and consider a finite set of integers $\calA$. Then provided that 
$A=\text{\rm card}(\calA)$ is sufficiently large in terms of $r$ and $s$, one has
\begin{equation}\label{3.2}
\text{\rm env}^*(\calA;\bfP)<\exp\left(3(\Lam+1)^A\right) .
\end{equation}
\end{theorem}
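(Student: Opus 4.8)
The plan is to construct an explicit strict $\bfP$-condenser of $\calA$ by finding a suitable modulus and reducing the elements of $\calA$ modulo that modulus. Write $\calA=\{a_1,\dots,a_A\}$ with $a_1<a_2<\dots<a_A$, and after a harmless translation assume $\min\calA=0$ so that $\text{env}(\calA)=\text{diam}(\calA)$. The key point is that the combinatorial structure of the solution set $S(\calA;\bfP)$, together with the distinctness of the elements of $\calA$, is entirely detected by the nonvanishing of finitely many integers, all of which divide the quantity $\Ups$ defined in \eqref{3.1}. Concretely, if $q$ is a natural number with the property that $q$ divides none of the integers
\[
a_1-a_2\quad(a_1,a_2\in\calA,\ a_1\ne a_2)\qquad\text{and}\qquad P_i(\bfa)\quad(\bfa\in\calA^s\setminus S(\calA;\bfP),\ 1\le i\le r),
\]
then the reduction map $\psi:\calA\to\dbZ$ sending each $a\in\calA$ to its least nonnegative residue modulo $q$ is injective (by the first batch of conditions), and for $\bfa\in\calA^s$ one has $P_i(\bfa)\equiv 0\ (\text{mod }q)$ if and only if $P_i(\psi(\bfa))\equiv 0\ (\text{mod }q)$; since $\psi(\bfa)\in\{0,1,\dots,q-1\}^s$ and the coefficients of $\bfP$ are bounded by $\Lam$, the value $P_i(\psi(\bfa))$ lies in an interval of length at most $\Lam q^{\,?}$, but linearity keeps $|P_i(\psi(\bfa))|\le \Lam q$, so for $q>\Lam q$—which never holds—one must instead argue that $P_i(\psi(\bfa))=0$ exactly when $P_i(\bfa)\equiv0$. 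The correct statement is: for $\bfa\in S(\calA;\bfP)$ we automatically get $P_i(\psi(\bfa))\equiv0\ (\text{mod }q)$, and if $q$ exceeds the largest possible value of $|P_i(\bfy)|$ over $\bfy\in\{0,\dots,q-1\}^s$ we would be done—but that is circular, so the actual mechanism is to choose $q$ dividing $\Ups$-type quantities and to note that linearity forces $P_i(\psi(\bfa))$ and $P_i(\bfa)$ to be congruent mod $q$, hence $P_i(\psi(\bfa))=0$ precisely when $q\mid P_i(\bfa)$, which by the choice of $q$ happens exactly when $P_i(\bfa)=0$. Thus $\psi$ is a Freiman $\bfP$-isomorphism onto $\calB=\psi(\calA)$ with $\text{env}(\calB)\le q$.

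The remaining task is to produce such a modulus $q$ that is not too large. Here I would invoke the standard observation that any nonzero integer $N$ has a prime divisor, or more flexibly a divisor, avoiding a prescribed finite set of "bad" residue behaviours; but the cleanest route is a counting/pigeonhole argument on prime powers. Every nonzero integer appearing in the displayed list divides $\Ups$, so it suffices to find $q$ such that $q$ divides none of finitely many integers each dividing $\Ups$; equivalently, one wants a $q$ coprime to $\Ups$, or at least such that no listed integer is a multiple of $q$. Taking $q$ to be a prime not dividing $\Ups$ works: such a prime $q$ exists with $q\le$ (roughly) $\log\Ups$ plus a constant, by the prime number theorem, since $\Ups$ has at most $O(\log\Ups)$ distinct prime factors. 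The main estimation step is then to bound $\Ups$ itself. From \eqref{3.1}, the first product has at most $A^2$ factors, each at most $\text{diam}(\calA)=\text{env}(\calA)$ in absolute value; but crucially we are bounding $\text{env}^*$, so I must instead bound $\Ups$ in terms of quantities intrinsic to the combinatorics, not $\text{env}(\calA)$. This is where the argument must be bootstrapped: apply the construction once to get \emph{some} condensation $\calB_0$ with $\text{env}(\calB_0)\ll_{A,\bfP}1$ (which exists by Theorem~\ref{theorem2.4}), and then the elements of $\calB_0$, hence the differences $a_1-a_2$ and the values $P_i(\bfa)$, are all bounded in terms of $A$ and $\Lam$ alone.

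Quantitatively, I expect the bound to come out as follows. Working with the condensation $\calB_0$ of Theorem~\ref{theorem2.4}, we may assume the elements of $\calA$ already satisfy $\text{env}(\calA)\ll_{A,\bfP}1$; but to get the explicit bound $\exp(3(\Lam+1)^A)$ we need a version of Theorem~\ref{theorem2.4} with explicit constants, which is precisely what the theorem delivers via this argument run once directly. So instead the logic is: bound $\Ups$ a priori. The second product in \eqref{3.1} ranges over $\bfa\in\calA^s$, of which there are $A^s$, and each factor $\sum_i|P_i(\bfa)|\le r\Lam\,\text{env}(\calA)$; the first has at most $A^2$ factors each at most $\text{env}(\calA)$. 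Hence $\log\Ups\le (A^2+sA^s)\log\bigl(r\Lam\,\text{env}(\calA)\bigr)$, which still involves $\text{env}(\calA)$—so one iterates: start from any condensation, re-run, and converge. The honest plan, therefore, is to first reduce to the case $\text{env}(\calA)\le C(A,\bfP)$ via Theorem~\ref{theorem2.4}, then show that for such $\calA$ the modulus $q$ may be chosen with $q<\exp(3(\Lam+1)^A)$ by bounding the number of prime divisors of $\Ups$ and applying Bertrand/PNT-type estimates, carefully tracking that the exponent $(\Lam+1)^A$ absorbs all the combinatorial factors $A^2$, $A^s$, $\log(r\Lam)$ once $A$ is large in terms of $r,s$. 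The hard part will be organising this bootstrapping cleanly and verifying that the explicit constant $3$ and the exponent $(\Lam+1)^A$ (rather than, say, $\Lam^{A^2}$ or $(s\Lam)^A$) genuinely suffice—this is a somewhat delicate juggling of the crude bounds against the available savings from requiring $A$ large, and getting the constants to line up is where most of the real care is needed.
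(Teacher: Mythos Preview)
Your proposal contains a genuine gap precisely at the point you yourself flag as ``circular.'' You correctly observe that if $\psi(a)$ denotes a residue of $a$ modulo $q$, then linearity gives $P_i(\psi(\bfa))\equiv P_i(\bfa)\pmod{q}$. You then assert that ``$P_i(\psi(\bfa))=0$ precisely when $q\mid P_i(\bfa)$.'' The forward implication is fine, but the reverse fails: if $P_i(\bfa)=0$ then indeed $q\mid P_i(\psi(\bfa))$, yet $|P_i(\psi(\bfa))|$ can be as large as roughly $\Lam q$, so $P_i(\psi(\bfa))$ may equal $\pm q,\pm 2q,\ldots$ rather than $0$. In other words, solutions need not map to solutions, and your map $\psi$ is not in general a Freiman $\bfP$-isomorphism. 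Merely choosing $q$ coprime to $\Ups$ does not repair this; that handles only the preservation of \emph{non}-solutions and injectivity.

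The paper resolves exactly this circularity with an idea you are missing: rather than taking arbitrary residues modulo a prime, it uses simultaneous Diophantine approximation (the multidimensional Dirichlet box principle, following Baker--Harman) to manufacture a modulus $h$ for which \emph{every} element $a\in\calA$ satisfies $\|a/h\|<1/\Lam$. Concretely, one takes a small prime $\pi\nmid\Ups$ with $\pi\ll\log\Ups$, sets $L=\operatorname{lcm}[1,\ldots,(\Lam+1)^A]$, finds $\rho\le(\Lam+1)^A$ with $\|\rho a/(\pi L)\|\le(\Lam+1)^{-1}$ for all $a\in\calA$, and puts $h=\pi L/\rho$. Then the numerically least residues $\psi(a)$ all satisfy $|\psi(a)|<h/\Lam$, so $|P_i(\psi(\bfa))|<h$ and the congruence $P_i(\psi(\bfa))\equiv0\pmod h$ genuinely forces $P_i(\psi(\bfa))=0$. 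This gives one condensation step shrinking $X=\text{env}(\calA)$ to roughly $(\log X)\exp(2(\Lam+1)^A)$, and iterating drives the enveloping radius below $\exp(3(\Lam+1)^A)$. Your proposed bootstrap via Theorem~\ref{theorem2.4} cannot substitute for this, since that theorem supplies no explicit constant whatsoever---it is the purely qualitative finiteness statement whose quantitative form is precisely Theorem~\ref{theorem3.1}. (A minor side issue: your initial translation to $\min\calA=0$ is not harmless here, since the system may be inhomogeneous and $\text{env}^*$, not $\text{diam}^*$, is at stake.)
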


\begin{proof} A moment of reflection reveals that there is no loss of generality in 
supposing that $\Lam\ge 2$ and $s\ge 2$. Write $X=\text{env}(\calA)-1$. Our strategy is 
to find an integer $h$ with $\Lam<h<2X$ having the following three properties:
\begin{enumerate}
\item[(i)] when $a_1,a_2\in \calA$ satisfy $a_1\ne a_2$, then 
$a_1\not\equiv a_2\mmod{h}$;
\item[(ii)] when $\bfa\not \in S(\calA;\bfP)$, then there is an index $i$ with $1\le i\le r$ for 
which one has $P_i(\bfa)\not\equiv 0\mmod{h}$;
\item[(iii)] for every $a\in \calA$, one has $\|a/h\|<1/\Lam$.
\end{enumerate}
If such an integer $h$ can be found, then we may define the map $\psi:\calA\rightarrow 
\dbZ$ by putting
\[
\psi(a)=\left[ a\mmod{h}\right],
\]
where $\left[ a\mmod{h}\right]$ denotes the numerically least residue of $a$ modulo 
$h$. To be clear, the numerically least residue of $a$ modulo $h$ is the integer $m$ with 
$-h/2<m\le h/2$ for which $a\equiv m\mmod{h}$. Property (i) then ensures that the set 
$\calB=\psi(\calA)$ is in bijective correspondence with $\calA$. Also, property (ii) ensures 
that whenever $\bfa\not\in S(\calA;\bfP)$, then for some index $i$ with $1\le i\le r$, one 
has
\[
P_i(\psi(\bfa))\equiv P_i(\bfa)\not\equiv 0\mmod{h},
\]
whence $P_i(\psi(\bfa))\neq 0$. However, when $\bfa\in \calS(\calA;\bfP)$, one has
\[
P_i(\psi(\bfa))\equiv P_i(\bfa)\equiv 0\mmod{h}\quad (1\le i\le r).
\]
At the same time, in view of property (iii), one has
\[
|P_i(\psi(\bfa))|<\biggl(|b_i|+\sum_{j=1}^s|c_{ij}|\biggr)\frac{h}{\Lam}\le h,
\]
whence $P_i(\psi(\bfa))=0$ $(1\le i\le r)$. We therefore infer that the map $\psi$ is a 
Freiman $\bfP$-isomorphism from $\calA$ to $\calB$. Consequently, since
\[
\text{env}(\calB)-1\le h/2<X=\text{env}(\calA)-1,
\]
we have confirmed the existence of a strict $\bfP$-condenser of $\calA$.\par

We establish the existence of a suitable integer $h$ by modifying very slightly an argument 
employed by Baker and Harman (see \cite[Proposition 1]{BH1996}). Recall the definition 
(\ref{3.1}) of the integer $\Ups$. A crude estimate delivers the bounds
\begin{equation}\label{3.3}
1\le \Ups\le (2X)^{A^2-A}(r\Lam X)^{A^s}\le \tfrac{1}{3}(r\Lam X)^{2A^s}.
\end{equation}
The number of prime divisors of $\Ups$ exceeding $\log (3\Ups)$ cannot exceed
\[
\frac{\log (3\Ups)}{\log \log (3\Ups)}.
\]
Thus, an application of the prime number theorem reveals that whenever $Y$ is large and 
$Y\ge 2\log (3\Ups)$, then in any interval $(Y,2Y)$, there exists a prime $\pi$ with 
$\pi\nmid \Ups$. It therefore follows from (\ref{3.3}) that we may choose a prime $\pi$ 
with $\pi\nmid \Ups$ for which
\begin{equation}\label{3.4}
\pi<4\log (3\Ups)\le 8A^s\log (r\Lam X).
\end{equation}
We put
\[
L=\text{lcm}[1,2,\ldots ,(\Lam+1)^A]
\]
and note that an elementary application of the prime number theorem ensures that, when 
$A$ is large, one has $L\le \exp(2(\Lam+1)^A)$. Next, by applying the multidimensional 
version of Dirichlet's box principle to the real numbers $a/(\pi L)$ $(a\in \calA)$, it follows 
that for some $\rho\in \dbN$ with 
$1\le \rho\le (\Lam+1)^A$, one has
\[
\left\| \frac{\rho a}{\pi L}\right\| \le \frac{1}{\Lam+1}\quad (a\in \calA).
\]
Since $\rho|L$, we may therefore define the integer $h=\pi L/\rho$, and then we see that
\begin{equation}\label{3.5}
\| a/h\|\le (\Lam+1)^{-1}\quad (a\in \calA).
\end{equation}

\par By construction, the integer $h$ is divisible by $\pi$. We now exploit the fact that 
$\pi\nmid \Ups$ using the definition \eqref{3.1}. Thus, when $a_1,a_2\in \calA$ satisfy 
$a_1\ne a_2$, one has $a_1\not\equiv a_2\mmod{\pi}$. Moreover, when $\bfa\in \calA^s$ 
one sees that $P_i(\bfa)\equiv 0\mmod{\pi}$ $(1\le i\le r)$ if and only if 
$\bfa\in S(\calA;\bfP)$. In combination with \eqref{3.5}, therefore, it is apparent that the 
properties (i), (ii) and (iii) above all hold for the integer $h$ that we have constructed. In 
particular, the map $\psi:\calA \rightarrow \dbZ$ defined by putting $\psi(a)=[a\mmod{h}]$ 
gives a Freiman $\bfP$-isomorphism from $\calA$ to $\calB=\psi(\calA)$ in which, on 
recalling (\ref{3.4}), we see that
\[
\text{env}(\calB)-1\le h/2\le \pi L/2<4A^s\log (r\Lam X)\text{exp}(2(\Lam+1)^A).
\]

\par We may summarise our deliberations thus far in the following form. Whenever $\calA$ 
is a finite subset of $\dbZ$ with cardinality $A$ and enveloping radius $X+1$, then $\calA$ 
possesses a $\bfP$-condensation $\calA_1=\psi(\calA)$ with enveloping radius at most 
$X_1+1$, where
\[
X_1=4A^s\log (r\Lam X)\text{exp}\left(2(\Lam+1)^A\right).
\]
When $A$ is large in terms of $r$ and $s$, and $X+1\ge \exp(3(\Lam+1)^A)$, we have
\[
4A^s\left( \log (r\Lam )+\log X\right) \le \tfrac{1}{2}X^{1/3}.
\]
Thus, under the same conditions on $A$ and $X$, it follows that
\[
X_1\le \tfrac{1}{2}X^{1/3}(X+1)^{2/3}<X,
\] 
and consequently one has $\text{env}(\calA_1)<\text{env}(\calA)$. Provided that 
\[
\text{env}(\calA_1)\ge \exp\left( 3(\Lam+1)^A\right),
\]
we may apply this process again, next showing that $\calA_1$ has a $\bfP$-condensation 
$\calA_2$ with $\text{env}(\calA_2)<\text{env}(\calA_1)$. Since $\bfP$-condensers may be 
composed, it follows that $\calA$ also has a $\bfP$-condensation $\calA_2$ with enveloping 
radius smaller than $\text{env}(\calA_1)$. By iterating this process repeatedly, with each 
iteration reducing the enveloping radius of the condensation of $\calA$, we ultimately obtain 
a condensation $\calA^*$ of $\calA$ for which
\[
\text{env}(\calA^*)<\exp\left( 3(\Lam+1)^A\right) .
\]
This establishes the bound \eqref{3.2}, and the proof of the theorem is complete.
\end{proof}

In the situation in which $\bfb={\mathbf 0}$, so that all of the linear polynomials are 
homogeneous, there is more freedom to apply changes of variable to advantage. Here the 
arguments are reminiscent of those employed in the proof of versions of Freiman's 
theorem (see, for example, the proof of \cite[Theorem 2.1]{BLR1998}).

\begin{theorem}\label{theorem3.2} Consider a system $\bfP$ of linear polynomials with 
$\bfb={\mathbf 0}$, as described in the preamble to the statement of Theorem 
\ref{theorem3.1}. Also, consider a finite set of integers $\calA$. Then provided that 
$A=\text{\rm card}(\calA)$ is sufficiently large in terms of $r$ and $s$, one has
\begin{equation}\label{3.6}
\text{\rm env}^*(\calA;\bfP)\le (\Lam+1)^A.
\end{equation}
\end{theorem}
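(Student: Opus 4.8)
The plan is to rerun the argument proving Theorem~\ref{theorem3.1}, but in a streamlined form in which the homogeneity is exploited to replace the composite modulus $h=\pi L/\rho$ of that proof by the prime $\pi$ alone: since $\bfb=\mathbf{0}$, dilating every element of $\calA$ by a common integer factor does not alter the solution set, so the Dirichlet approximation that was previously absorbed into the modulus may instead be performed as a dilation. Just as there, we may assume $\Lam\ge2$ and $s\ge2$; write $X=\text{env}(\calA)-1$, recall $\Ups$ from~\eqref{3.1} and the bound~\eqref{3.3}, and observe that if $\text{env}(\calA)\le(\Lam+1)^A$ there is nothing to prove, so we may assume $X\ge(\Lam+1)^A$.

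The central construction is as follows. Let $\pi$ be a prime with $\pi\nmid\Ups$ and $\pi>(\Lam+1)^A$. Since $(\Lam+1)^A$ is an integer, the multidimensional Dirichlet box principle, applied to the $(\Lam+1)^A+1$ points $(ta/\pi)_{a\in\calA}\in(\dbR/\dbZ)^A$ for $t=0,1,\ldots,(\Lam+1)^A$ relative to a subdivision of the torus into $(\Lam+1)^A$ boxes of side $(\Lam+1)^{-1}$, produces an integer $t$ with $1\le t\le(\Lam+1)^A$ for which $\|ta/\pi\|\le(\Lam+1)^{-1}$ for every $a\in\calA$; then $\pi\nmid t$, as $t<\pi$. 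Put $\psi(a)=[at\mmod{\pi}]$, the numerically least residue. The hypotheses $\pi\nmid\Ups$ and $\pi\nmid t$ guarantee, exactly as in the proof of Theorem~\ref{theorem3.1}, that $\psi$ is injective on $\calA$ and that, for $\bfa\in\calA^s$, one has $P_i(\psi(\bfa))\equiv0\mmod{\pi}$ for all $i$ precisely when $\bfa\in S(\calA;\bfP)$; here one uses the congruence $P_i(\psi(\bfa))\equiv tP_i(\bfa)\mmod{\pi}$, which is where the \emph{homogeneity} of the forms $P_i$ enters. Finally, when $\bfa\in S(\calA;\bfP)$ the bound on $\|ta/\pi\|$ yields $|P_i(\psi(\bfa))|\le\Lam\pi/(\Lam+1)<\pi$, whence $P_i(\psi(\bfa))=0$. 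Thus $\psi$ is a Freiman $\bfP$-isomorphism onto $\calB=\psi(\calA)$ with $\text{env}(\calB)\le\lfloor\pi/(\Lam+1)\rfloor+1$.

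It remains to choose $\pi$ and to iterate. By~\eqref{3.3} the number of distinct prime divisors of $\Ups$ is at most $\log_2\Ups\ll A^s\log(r\Lam X)$. When $X$ is not astronomically large --- in particular whenever $X\le(\Lam+1)^{2A}$ --- this count is, once $A$ is large in terms of $r$ and $s$, far smaller than the number of primes in the interval $\bigl((\Lam+1)^A,(\Lam+1)^{A+1}\bigr)$, since $(\Lam+1)^A$ outgrows any fixed power of $A$ times $(\log(\Lam+1))^2$; hence there is a prime $\pi$ with $(\Lam+1)^A<\pi<(\Lam+1)^{A+1}$ and $\pi\nmid\Ups$, and the construction above provides a strict $\bfP$-condenser of $\calA$ onto a set of enveloping radius $\lfloor\pi/(\Lam+1)\rfloor+1\le(\Lam+1)^A$, which yields~\eqref{3.6}. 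When instead $X$ is astronomically large, we take $\pi$ to be the least prime exceeding $(\Lam+1)^A$ that does not divide $\Ups$; the prime number theorem then gives $\pi\ll A^s\log X\log\log X$, so the construction yields a strict $\bfP$-condenser of $\calA$ onto a set of enveloping radius $\ll A^s\log X\log\log X<X$. Repeating with this new set in place of $\calA$, and noting that each iteration replaces the enveloping radius by a quantity no larger than a fixed polynomial in $A$ times $\log X\log\log X$, after finitely many steps we reach the first regime; composing the condensers so obtained completes the proof. (One could alternatively first invoke Theorem~\ref{theorem3.1} to dispose of the astronomical range, at the cost of still requiring a bounded number of further iterations.)

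The step I expect to be the main obstacle is the calibration of the prime $\pi$: it must exceed $(\Lam+1)^A$ so that the dilation parameter $t$ is automatically coprime to it, yet lie below $(\Lam+1)^{A+1}$ so that the output has enveloping radius at most $(\Lam+1)^A$. These two requirements are simultaneously met exactly when $\Ups$ has fewer prime divisors than there are primes in $\bigl((\Lam+1)^A,(\Lam+1)^{A+1}\bigr)$, a condition that can fail when $X$ is astronomically large, since then $\Ups$ may have on the order of $A^s\log X$ prime divisors. Managing this dichotomy --- contracting $X$ rapidly in the large regime and then landing precisely on the bound $(\Lam+1)^A$ in the small regime, while checking at every stage that the map produced really is a condenser rather than an expansion --- is the delicate part.
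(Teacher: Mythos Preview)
Your argument is essentially the paper's: choose a prime $\pi>(\Lam+1)^A$ not dividing $\Ups$, apply simultaneous Dirichlet approximation to produce a dilation $t\le(\Lam+1)^A$ (automatically coprime to $\pi$) with $\|ta/\pi\|\le 1/(\Lam+1)$, set $\psi(a)=[ta\bmod\pi]$, verify that $\psi$ is a Freiman $\bfP$-isomorphism using homogeneity, and iterate. Your observation that in fact $|\psi(a)|\le\pi/(\Lam+1)$ (not merely $\pi/2$, which is all the paper uses) is a genuine sharpening of the bookkeeping, and is what allows your one-step landing once $\pi<(\Lam+1)^{A+1}$.

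There is one slip in the large-$X$ regime. You assert that the least prime $\pi>(\Lam+1)^A$ with $\pi\nmid\Ups$ satisfies $\pi\ll A^s\log X\log\log X$, but this ignores the lower bound you have just imposed: with $\Lam=2$ and $X$ only barely above $3^{2A}$, the quantity $A^s\log X\log\log X$ is polynomial in $A$ while $\pi>3^A$. The correct bound, and the one the paper records, is
\[
\pi<2\max\bigl\{(\Lam+1)^A,\ 4A^s\log(r\Lam X)\bigr\}.
\]
If the first term dominates, your own inequality $\text{env}(\calB)\le\lfloor\pi/(\Lam+1)\rfloor+1$ already gives $\text{env}(\calB)\le(\Lam+1)^A$ in a single step; if the second dominates, one iterates exactly as you describe. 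Your parenthetical appeal to Theorem~\ref{theorem3.1} does not by itself close the gap, since that theorem only delivers $\log X\ll(\Lam+1)^A$, which is far from $X\le(\Lam+1)^{2A}$. With this correction the proof goes through and coincides with the paper's.
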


\begin{proof} We proceed much as in the proof of Theorem \ref{theorem3.1}, though 
with a twist en route. First, writing $X=\text{env}(\calA)-1$ and defining the integer $\Ups$ 
as in (\ref{3.1}), we again obtain the bound (\ref{3.3}), and conclude that there exists a 
prime number $\pi$ with $\pi\nmid \Ups$ satisfying the property that
\begin{equation}\label{3.7}
(\Lam+1)^A<\pi<2\max\{ (\Lam+1)^A,4A^s\log (r\Lam X)\}.
\end{equation}
By applying the multidimensional version of Dirichlet's approximation theorem to the real 
numbers $a/\pi$ $(a\in \calA)$, it follows that for some $\rho\in \dbN$ with 
$1\le \rho\le (\Lam+1)^A$, one has
\[
\left\| \frac{\rho a}{\pi}\right\|\le \frac{1}{\Lam+1}\quad (a\in \calA).
\]
We fix any such integer $\rho$, noting that since $\pi>\rho$, one has $(\rho,\pi)=1$. It 
follows that:
\begin{enumerate}
\item[(i)] whenever $a_1,a_2\in \calA$ satisfy $a_1\ne a_2$, then 
$\rho a_1\not\equiv \rho a_2\mmod{\pi}$;
\item[(ii)] whenever $\bfa\not\in S(\calA;\bfP)$, then there is an index $i$ with $1\le i\le r$ 
for which one has $P_i(\rho \bfa)\not\equiv 0\mmod{\pi}$;
\item[(iii)] for every $a\in \calA$, one has $\| \rho a/\pi\|<1/\Lam$.
\end{enumerate} 
\par

We now define the map $\psi:\calA\rightarrow \dbZ$ by putting
\[
\psi(a)=[\rho a\mmod{\pi}].
\]
Property (i) then ensures that the set $\calC=\psi(\calA)$ is in bijective correspondence 
with $\calA$. Also, property (ii) ensures that whenever $\bfa\not\in S(\calA;\bfP)$, then for 
some index $i$ with $1\le i\le r$, one has
\[
P_i(\psi(\bfa))\equiv \rho P_i(\bfa)\not\equiv 0\mmod{\pi},
\]
whence $P_i(\psi(\bfa))\neq 0$. However, when $\bfa\in S(\calA;\bfP)$, one has
\[
P_i(\psi(\bfa))\equiv \rho P_i(\bfa)\equiv 0\mmod{\pi}\quad (1\le i\le r).
\]
At the same time, in view of property (iii), one has
\[
|P_i(\psi(\bfa))|<\frac{\pi}{\Lam}\sum_{j=1}^s|c_{ij}|\le \pi,
\]
whence $P_i(\psi(\bfa))=0$ $(1\le i\le r)$. We therefore infer that $\psi$ is a Freiman 
$\bfP$-isomorphism from $\calA$ to $\calC$. Consequently, provided that $\pi<2X$, we see 
that
\[
\text{env}(\calC)-1\le \pi/2<X=\text{env}(\calA)-1,
\]
and thus we have established the existence of a strict $\bfP$-condenser of $\calA$.\par

Notice here that in view of (\ref{3.7}), one has
\[
\text{env}(\calC)-1\le \pi/2<\max\{ (\Lam+1)^A,4A^s\log (r\Lam X)\},
\]
and we again have available an iterative process for reducing the enveloping radius of 
$\bfP$-condensations of $\calA$ similar to that made available in the concluding stages of 
the proof of Theorem \ref{theorem3.1}. When $A$ is sufficiently large in terms of $r$ and 
$s$, and $X\ge (\Lam+1)^A$, we have
\[
4A^s(\log (r\Lam)+\log X)<X.
\]
In this instance, therefore, under the same conditions on $A$ and $X$, we discern from 
\eqref{3.7} that $\pi<2X$ and hence that $\text{env}(\calC)<\text{env}(\calA)$. Thus, our 
iteration continues until we obtain a $\bfP$-condensation $\calA^*$ of $\calA$ for which 
$\text{env}(\calA^*)-1<(\Lam+1)^A$. This confirms the bound \eqref{3.6}, and thus the 
proof of the theorem is complete.
\end{proof}

The problem of obtaining lower bounds on $\text{env}^*(\calA;\bfP)$ has been considered 
in special cases such as that in which the system $\bfP$ consists of the single polynomial 
$x_1+x_2=x_3+x_4$. Here, it is apparent that the set $\calA=\{0,1,2,4,\ldots ,2^{A-2}\}$ 
cannot be condensed into a fundamentally smaller set (see \cite[\S5]{KL2000}). Thus, in this 
special case, one has $\text{env}^*(\calA;\bfP)\gg 2^A$, and it is apparent that the upper 
bound on $\text{env}^*(\calA;\bfP)$ provided by Theorem \ref{theorem3.2} cannot in 
general be replaced by a quantity subexponential in $A$. 

\section{Condensations for non-linear systems of equations, I}
Equipped with the discussion of \S3 applicable to linear equations, we move on in this 
section to consider the corresponding situation for the solubility of polynomial equations of 
degree exceeding one over a finite subset $\calA$ of the integers. Here, any attempt to 
merely mimic the proofs of Theorems \ref{theorem3.1} and \ref{theorem3.2} must plainly 
be abandoned. Suppose, for example, that we seek to analyse the solubility with 
$\bfx\in \calA^4$ of the equation $x_1^2+x_2^2=x_3^2+x_4^2$ by utilising the map 
$\psi:\dbZ \rightarrow \dbZ/h\dbZ$ defined by putting $\psi(a)=[a\mmod{h}]$ for a suitable 
positive integer $h$. The optimistic notion that the congruence
\[
\psi(a_1)^2+\psi(a_2)^2\equiv \psi(a_3)^2+\psi(a_4)^2\mmod{h}
\]
might imply that
\[
\psi(a_1)^2+\psi(a_2)^2=\psi(a_3)^2+\psi(a_4)^2
\]
would seem to demand a choice for $h$ ensuring that $|\psi(a)|<\tfrac{1}{4}h^{1/2}$ 
for all $a\in \calA$. Such an eventuality cannot reasonably be countenanced for any but 
very special sets $\calA$. However, a means of mapping subsets of finite fields into subsets 
of $\dbC$, while preserving associated solution structures, has been made available in work 
of Grosu \cite{Gro2014}. With care, this approach can be wrought to yield a 
$\bfP$-condenser of sorts in the non-linear situation currently of interest to us.\par

The discussion of this section and the next requires the introduction of notions somewhat 
more flexible than those defined in \S2. We have in mind now that the sets of integers 
under consideration will be replaced by elements of some algebraic number field. For the 
sake of simplicity, we shall restrict the polynomial equations under consideration to have 
coefficients lying in $\dbZ$, though it is straightforward to relax this condition so that the 
coefficient ring $\dbZ$ is replaced by the ring of integers from some other number field.\par

\begin{definition}\label{definition4.1}
Let $\calA$ and $\calB$ be finite sets of algebraic numbers. Suppose that 
$P_1,\ldots ,P_r\in \dbZ[x_1,\ldots ,x_s]$. We say that a bijection 
$\psi:\calA\rightarrow \calB$ is an {\it algebraic Freiman $\bfP$-isomorphism} if, whenever 
$(x_1\ldots ,x_s)\in \calA^s$, then
\[
P_i(x_1,\ldots ,x_s)=0\quad (1\le i\le r)
\]
if and only if
\[
P_i(\psi(x_1),\ldots ,\psi(x_s))=0\quad (1\le i\le r).
\]
\end{definition}

Notice here that we have not insisted that $\calA$ and $\calB$ lie in the same number field. 
Thus, for example, one could have $\calA\subset \dbZ[\sqrt{-1}]$ and 
$\calB\subset \dbZ[\sqrt[3]{2}]$. Given this flexibility in the choice of the image set, an 
appropriate definition of the analogue of a $\bfP$-condenser takes some care. First, when 
$\calC=\{c_1,\ldots ,c_n\}$ is a finite set of algebraic numbers, we define the number field 
$K=\dbQ(\calC)$ by putting $K=\dbQ(c_1,\ldots ,c_n)$. We then put
\[
d(\calC)=[K:\dbQ]\quad \text{and}\quad D(\calC)=\text{Disc}(K:\dbQ).
\]
Rather than become entangled with a coordinate basis for $K$ over $\dbQ$, we instead 
work with minimal polynomials associated with each element $c\in \calC$. Here, by the 
minimal polynomial $m_c\in \dbZ[x]$ of $c\in \calC$, we mean the irreducible polynomial in 
$\dbZ[x]$ with content $1$ and positive leading coefficient satisfying the condition that 
$m_c(c)=0$. Note that if $m_c$ has leading coefficient $l$, then $l^{-1}m_c$ is the 
conventional minimal polynomial of $c$ over $\dbQ$. Given a polynomial $f\in \dbZ[x]$ with 
$f(x)=f_0+f_1x+\ldots +f_dx^d$, we define
\[
\|f\|_q=\left( |f_0|^q+|f_1|^q+\ldots +|f_d|^q\right)^{1/q}\quad (q=1,2).
\]
Then, as a measure of the enveloping radius of the set $\calC$, we work with the 
{\it algebraic enveloping radius} 
\[
\text{Env}(\calC)=\max\{ \| m_c\|_1 : c\in \calC\}.
\]
If $\calC$ is a set of rational integers, then it is apparent that 
$\text{Env}(\calC)=\text{env}(\calC)$. Notice that $\text{Env}(\calC)$ is independent of any 
particular coordinate basis for $\dbQ(\calC)$.\par

Our goal is now to map a set of algebraic numbers $\calA$, having a large algebraic 
enveloping radius $\text{Env}(\calA)$, to a new set $\calB$ having smaller algebraic 
enveloping radius $\text{Env}(\calB)$, via an algebraic Freiman $\bfP$-isomorphism 
$\psi:\calA\rightarrow \calB$. In this way, the size of the elements of $\calB$ is morally 
speaking smaller than the corresponding size of the elements of $\calA$, and yet $\calB$ 
preserves the salient features of the solubility of the system $\bfP(\bfx)={\mathbf 0}$ 
exhibited by $\calA$. This objective motivates the following analogues of Definitions 
\ref{definition2.2} and \ref{definition2.3}.

\begin{definition}\label{definition4.2}
We say that a mapping $\psi:\calA\rightarrow \calC$ is a 
{\it $d$-algebraic $\bfP$-condenser} of $\calA$ if it is an algebraic Freiman 
$\bfP$-isomorphism having the property that
\[
\text{Env}(\calC)\le \text{Env}(\calA)\quad \text{and}\quad d(\calC)=d.
\]
When the inequality here is strict, we refer to $\psi$ as a {\it strict} $d$-algebraic 
$\bfP$-condenser of $\calA$. In either case, we refer to $\calC$ as being a 
{\it $d$-algebraic condensation} of $\calA$.
\end{definition}

\begin{definition}\label{definition4.3}
Let $\calA$ be a finite set of algebraic numbers, and denote by $\Psi_\bfP(\del)$ the set of 
all $d$-algebraic $\bfP$-condensers of $\calA$ with $d\le \del$. Then the {\it $\del$-limited 
$\bfP$-essential enveloping radius} of a finite set $\calA$ of algebraic integers is
\[
\text{Env}_\del^*(\calA;\bfP)=\min \{ \text{Env}(\psi(\calA)):\psi\in \Psi_\bfP(\del) \}.
\]
\end{definition}

We are now equipped to describe, in broad and rough terms, our strategy for condensing 
algebraic sets $\calA$ into sets $\calC$ establishing that $\text{Env}_\del^*(\calA;\bfP)$ is 
bounded purely in terms of $|\calA|$ and $\bfP$, while at the same time maintaining $\del$ 
to be likewise bounded purely in terms of $|\calA|$ and $\bfP$. The details of this process 
will be the subject of the next section.\par

Let $\calA$ be a finite set of algebraic integers with $\text{card}(\calA)=A$. In fact we shall 
need to consider finite sets of algebraic numbers, and this generates additional complications 
relative to the situation where the algebraic numbers are in fact algebraic integers. 
However, this simplified case allows us to sketch out the necessary argument. Put 
$X=\text{Env}(\calA)$ and suppose that $d(\calA)=d_0$, with $d_0$ bounded above by 
some absolute constant.\par

Our first step is to seek a rational prime number $\pi$ having the property that 
$\pi \nmid (a_1-a_2)$ for any distinct elements $a_1,a_2\in \calA$, and also 
$\pi\nmid P_i(\bfa)$ for any $\bfa\in \calA^s$ with $P_i(\bfa)\ne 0$ $(1\le i\le r)$. For the 
sake of concreteness, we shall in fact interpret these divisibility relations by taking norms 
of the algebraic numbers in question. It is apparent that an argument similar to that applied 
in \S3 will deliver such a prime to us with $\pi\ll_{A,\bfP}\log X$. Unfortunately, we must 
ensure that this prime number behaves congenially with respect to the number field 
$K_0=\dbQ(\calA)$, because we intend subsequently to consider the set $\calA$ modulo 
$\pi$ to be a subset of the finite field $\dbF_\pi$, and thence consider the associated 
system of congruences
\[
P_i(\bfa)\equiv 0\mmod{\pi}\quad (1\le i\le r).
\]
We therefore seek an appropriately sized prime $\pi$ having the property that a certain 
minimal polynomial associated with $\calA$ splits into linear factors over $\dbF_\pi$. If we 
assume a certain Generalised Riemann Hypothesis, then it follows from an appropriate 
version of the Chebotarev density theorem that such a prime can be shown to exist with 
$\pi \ll_{A,\bfP}(\log X)^3$.\par

Having obtained a prime $\pi$ with the properties just described, our second step is to 
apply the argument of Grosu \cite{Gro2014} to rectify the set $\calA\mmod{\pi}$. Provided 
that $\pi$ is chosen large enough in terms of $A$ and $\bfP$, this argument shows that the 
set $\calA\mmod{\pi}$ can be mapped to a new set $\calB$ algebraic Freiman 
$\bfP$-isomorphic to $\calA$, and having the property that
\[
\text{Env}(\calB)\ll_{A,\bfP}(\log X)^{c(A,\bfP)}\quad \text{and}\quad 
d(\calB)\ll_{A,\bfP}1.
\]
Here, the positive number $c(A,\bfP)$ depends at most on $A$ and $\bfP$. Notice here 
that, whilst the set $\calA$ has elements of size roughly $X$, the elements of $\calB$ have 
size roughly a power of $\log X$. This reduction in size is crucial to our condensation 
argument.\par

By iterating these two steps sufficiently many times, much as was done in \S3 in the simpler 
linear case in the proofs of Theorems \ref{theorem3.1} and \ref{theorem3.2}, we 
ultimately obtain a set $\calC$ algebraically Freiman $\bfP$-isomorphic to $\calA$, and 
satisfying the property that
\[
\text{Env}(\calC)\ll_{A,\bfP}1\quad \text{and}\quad d(\calB)\ll_{A,\bfP}1.
\]
All that remains is to take care in controlling the behaviour in these results of the implicit 
constants depending on $A$ and $\bfP$.\par

We describe the details of the argument just outlined in the next section. For now, it suffices 
to say that in the setting countenanced in the above discussion, we are able to show that, 
subject to the validity of the Generalised Riemann Hypothesis for all Dedekind zeta 
functions, there is a set of algebraic numbers $\calC$ algebraic Freiman $\bfP$-isomorphic 
to $\calA$ with
\[
d(\calC)\le \exp_2(\kap_1A)\quad \text{and}\quad \text{Env}(\calC)\le \exp_4(\kap_2A),
\]
where $\kap_1$ and $\kap_2$ are positive numbers depending at most on $\bfP$.

\section{Condensations for non-linear systems of equations, II}
Let us now put the plan of the previous section into action. It is worth stressing that our 
bounds will be extraordinarily weak. In consequence, it makes sense to avoid stress on 
detailed bounds, but instead opt for estimates somewhat weaker than might be achieved 
with greater attention to detail, but ones nonetheless simple to state in suitable notation.

\par Let $s$ and $r$ be natural numbers with $s\ge 2$, and for $1\le i\le r$ consider fixed 
polynomials $P_i=P_i(\bfx)\in \dbZ[x_1,\ldots ,x_s]$ of respective degrees $t_i$. We write 
$\| P_i\|_1$ for the sum of the absolute values of the coefficients of $P_i$, and we suppose 
that $t_i\le t$ and $\|P_i\|_1\le k$ for $1\le i\le r$. Then, in the sense of Grosu 
\cite{Gro2014}, the polynomial system $\bfP$ is $(k,t)$-bounded. Note that, in view of our 
work in \S3, there is no loss of generality in supposing that $t\ge 2$. Our initial discussion 
will be focused on establishing the iterative step described in the previous section. Suppose 
then that $\calA$ is a set of algebraic numbers with $\text{card}(\calA)=A\ge 2$. Our 
discussion will be simplified by introducing the function
\begin{equation}\label{5.1}
\nu(n)=(2t)^{(2t)^{2^n}}.
\end{equation}
To avoid any potential ambiguity, we note that this is a $3$-fold iterated exponential function 
of $n$. Equipped with this notation, it is convenient to suppose that
\begin{equation}\label{5.2}
d(\calA)\le (2t)^{2^A}\quad \text{and}\quad \text{Env}(\calA)=X.
\end{equation}

In most familiar applications of algebraic number theory, analytic number theorists are used 
to working with a fixed number field $K$ wherein the degree and discriminant are 
well-controlled. Unfortunately for us, we require discussions of field extensions of $\dbQ$ 
with enormous degree and discriminant, and so we are forced to pay attention to details 
that in normal circumstances would not delay our argument. Our initial focus lies on the 
non-zero algebraic number
\begin{equation}\label{5.3}
\Ups(\calA)=\Biggl( \prod_{\substack{a_1,a_2\in \calA \\ a_1\ne a_2}}(a_1-a_2)\Biggr) 
\Biggl( \prod_{i=1}^r\prod_{\substack{\bfa\in \calA^s\\ P_i(\bfa)\ne 0}}P_i(\bfa)\Biggr) .
\end{equation}
We seek a rational prime number $\pi$ with properties associated to $\Ups(\calA)$ outlined 
in the discussion of the previous section. In preparation for our application of the 
Chebotarev density theorem, we discuss the Galois closure $K^c$ of $K=\dbQ(\calA)$, and 
some of its properties.

\begin{lemma}\label{lemma5.1}
One has $[K^c:\dbQ]\le \nu(A+1)$.
\end{lemma}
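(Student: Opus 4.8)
The plan is to bound the degree of the Galois closure $K^c$ of $K = \dbQ(\calA)$ over $\dbQ$ by first controlling $[K:\dbQ]$ and then using the standard bound on the size of a Galois closure in terms of the degree of the field it closes. Recall that if $[K:\dbQ] = d$, then $K^c$ embeds into a product of at most $d$ conjugate copies of $K$, so that $[K^c:\dbQ] \le d!$, and in particular $[K^c:\dbQ] \le d^d$. (Alternatively, the Galois group of $K^c/\dbQ$ is a transitive subgroup of $S_d$ in its action on the $d$ embeddings of a primitive element, hence has order dividing $d!$.) So everything reduces to an upper bound for $d(\calA) = [\dbQ(\calA):\dbQ]$ that, once fed into $d \mapsto d^d$ (or $d!$), lands below $\nu(A+1) = (2t)^{(2t)^{2^{A+1}}}$.

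To bound $d(\calA)$, I would argue as follows. We have $\calA = \{a_1, \ldots, a_A\}$, and $\dbQ(\calA) = \dbQ(a_1, \ldots, a_A)$ is obtained by successively adjoining the $a_i$. By the tower law, $[\dbQ(\calA):\dbQ] = \prod_{i=1}^A [\dbQ(a_1,\ldots,a_i):\dbQ(a_1,\ldots,a_{i-1})] \le \prod_{i=1}^A \deg(a_i)$, where $\deg(a_i)$ is the degree of $a_i$ over $\dbQ$, i.e. the degree of its minimal polynomial $m_{a_i}$. Now the hypothesis (\ref{5.2}) gives $d(\calA) \le (2t)^{2^A}$ directly, but — and this is the point of the lemma's phrasing — even without that convenient standing assumption, one bounds $\deg(a_i)$ by observing that $\text{Env}(\calA) = X$ controls the $\ell^1$-norm of each $m_{a_i}$, and the degree of a polynomial is at most its number of nonzero coefficients minus one, which is at most $\|m_{a_i}\|_1$ (as the coefficients are integers of absolute value at least $1$ when nonzero). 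Hence $\deg(a_i) \le X$, so $d(\calA) \le X^A$; combined with $d(\calA) \le (2t)^{2^A}$ we keep whichever bound the later argument needs. Since we are given $d(\calA) \le (2t)^{2^A}$, we use $d := (2t)^{2^A}$.

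The final step is the arithmetic: with $d \le (2t)^{2^A}$ we get
\[
[K^c:\dbQ] \le d^d \le \left((2t)^{2^A}\right)^{(2t)^{2^A}} = (2t)^{2^A (2t)^{2^A}}.
\]
It remains to check $2^A (2t)^{2^A} \le (2t)^{2^{A+1}} = \left((2t)^{2^A}\right)^2$, i.e. $2^A \le (2t)^{2^A}$, which is clear since $2t \ge 2$ and $2^A \le 2^{2^A} \le (2t)^{2^A}$ for $A \ge 1$. Therefore $[K^c:\dbQ] \le (2t)^{2^{A+1}} \le (2t)^{(2t)^{2^{A+1}}} = \nu(A+1)$, the last inequality again being immediate from $2t \ge 2$. (If instead one prefers the sharper $[K^c:\dbQ] \le d!$, the same chain goes through with room to spare, since $d! \le d^d$.)

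I do not anticipate a genuine obstacle here: the lemma is essentially bookkeeping, and the only thing demanding care is making sure the two nested exponentiations — first $d \mapsto d^d$ from passing to the Galois closure, then the definition of $\nu$ as a triple exponential — are lined up so the exponents absorb the stray factors of $2^A$ and the outer $(2t)$. The mild subtlety is that $\nu(A+1)$ is deliberately wasteful (a full extra exponential beyond what $d^d$ produces), precisely so that this kind of slack is available for the subsequent Chebotarev application; so the proof should explicitly note that the crude bound $(2t)^{2^{A+1}}$ already suffices and $\nu(A+1)$ is a comfortable over-estimate.
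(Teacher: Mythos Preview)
Your argument is correct and matches the paper's proof essentially line for line: use the standing hypothesis $d(\calA)\le (2t)^{2^A}$, bound $[K^c:\dbQ]\le d(\calA)!\le d(\calA)^{d(\calA)}$, and then check that the exponent $2^A(2t)^{2^A}$ is at most $(2t)^{2^{A+1}}$. One small slip to fix in your write-up: in the final chain you write $[K^c:\dbQ]\le (2t)^{2^{A+1}}$, but $(2t)^{2^{A+1}}$ is your bound on the \emph{exponent}, so the correct statement is $[K^c:\dbQ]\le (2t)^{(2t)^{2^{A+1}}}=\nu(A+1)$ directly (your digression bounding $\deg(a_i)$ via $X$ is also unnecessary, since $d(\calA)\le (2t)^{2^A}$ is already assumed in \eqref{5.2}).
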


\begin{proof} By the primitive element theorem, there is some algebraic number 
$\tet \in K$ for which $K=\dbQ(\tet)$. The minimal polynomial $m_\tet$ of $\tet$ over 
$\dbZ$ has degree $[K:\dbQ]=d(\calA)\le (2t)^{2^A}$. Thus, the splitting field of $m_\tet$, 
which contains $K^c$, has degree at most $d(\calA)!$. We therefore conclude that
\[
[K^c:\dbQ]\le d(\calA)^{d(\calA)}\le (2t)^B,
\]
where
\[
B=2^A(2t)^{2^A}\le (2t)^{A+2^A}\le (2t)^{2^{A+1}}.
\]
Thus, on recalling the notation \eqref{5.1}, we find that $[K^c:\dbQ]\le \nu(A+1)$, and the 
proof of the lemma is complete.
\end{proof}

\begin{lemma}\label{lemma5.2}
One has $\text{\rm Disc}(K^c:\dbQ)\le (2tX)^{\nu (A+4)}$.
\end{lemma}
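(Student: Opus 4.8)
The plan is to exhibit a single algebraic \emph{integer} $\vartheta$ that generates $K^c$ over $\dbQ$ and whose conjugates are not too large, and then to bound $\text{Disc}(K^c:\dbQ)$ by the discriminant of the minimal polynomial of $\vartheta$. The first step is to replace $\calA$ by algebraic integers: for $a\in \calA$, writing $l_a\ge 1$ for the leading coefficient of the minimal polynomial $m_a$, the number $\atil=l_a a$ is an algebraic integer whose monic minimal polynomial $m_{\atil}\in \dbZ[x]$ has the same degree as $m_a$ and satisfies $\|m_{\atil}\|_1\le l_a^{\deg m_a-1}\|m_a\|_1\le X^{d(\calA)}$. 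Since $\dbQ(\{\atil:a\in \calA\})=\dbQ(\calA)=K$, the Galois closure $K^c$ is unchanged. I would then let $\beta_1,\ldots,\beta_N$ be the roots in $\dbC$, listed with multiplicity, of the polynomials $m_{\atil}$ as $a$ ranges over $\calA$, so that $N\le A\,d(\calA)$ and $K^c=\dbQ(\beta_1,\ldots,\beta_N)$, this field being the splitting field of $\prod_{a\in \calA}m_{\atil}$ over $\dbQ$. Each $\beta_k$ is an algebraic integer, and all of its conjugates, being roots of one of the monic polynomials $m_{\atil}$, have absolute value at most $\|m_{\atil}\|_1\le X^{d(\calA)}$.

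Next I would invoke a standard effective form of the primitive element theorem: since $K^c/\dbQ$ is finite and separable and is generated by $\beta_1,\ldots,\beta_N$, there exist rational integers $c_1,\ldots,c_N$ with $|c_k|\le [K^c:\dbQ]^2$ for which $\vartheta:=c_1\beta_1+\ldots+c_N\beta_N$ generates $K^c$ over $\dbQ$. Being an integral combination of algebraic integers, $\vartheta$ lies in $\calO_{K^c}$, and every conjugate of $\vartheta$ has absolute value at most $N[K^c:\dbQ]^2X^{d(\calA)}$. Combining Lemma \ref{lemma5.1} with the crude bounds $d(\calA)\le (2t)^{2^A}$ and $N\le (2t)^{2^{A+1}}$, one checks that each conjugate of $\vartheta$ has absolute value at most $(2tX)^M$, where $M=(2t)^{2^{A+3}}$. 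Since $\vartheta\in \calO_{K^c}$ generates $K^c$, one has $\text{Disc}(K^c:\dbQ)\mid \text{disc}(m_\vartheta)$, so that, writing $n=[K^c:\dbQ]\le \nu(A+1)$,
\[
\bigl|\text{Disc}(K^c:\dbQ)\bigr|\le \bigl|\text{disc}(m_\vartheta)\bigr|=\prod_{1\le i<j\le n}\bigl|\vartheta^{(i)}-\vartheta^{(j)}\bigr|^2\le \bigl(2(2tX)^M\bigr)^{n(n-1)}\le (2tX)^{2Mn^2}.
\]
It then remains only to check that $2Mn^2\le \nu(A+4)$, which is a routine comparison of iterated exponentials resting on the inequality $\nu(m)^2\le \nu(m+1)$ (valid because $(2t)^{2^m}\ge 2$); this leaves a comfortable margin, the bound in fact holding with $\nu(A+3)$ in place of $\nu(A+4)$.

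The one genuine subtlety is the passage to algebraic integers at the outset: since the elements of $\calA$ need only be algebraic numbers, naive $\dbQ$-linear combinations of their conjugates need not lie in $\calO_{K^c}$, which would invalidate the divisibility $\text{Disc}(K^c:\dbQ)\mid \text{disc}(m_\vartheta)$; clearing leading coefficients repairs this at the negligible cost of replacing $X$ by $X^{d(\calA)}$. Otherwise the argument is routine, the only care needed being the bookkeeping of the towers of exponentials — but since $\nu$ grows so violently, every polynomial loss is absorbed by $\nu(m)^2\le \nu(m+1)$, and the stated bound is reached with room to spare.
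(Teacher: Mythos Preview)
Your argument is correct and takes a genuinely different route from the paper. The paper never constructs an explicit primitive element; instead it bounds $\text{Disc}(K^c:\dbQ)$ by iterated application of T\^oyama's inequality $\text{Disc}(EF:\dbQ)\le (\text{Disc}(E:\dbQ)\,\text{Disc}(F:\dbQ))^{[EF:\dbQ]}$, first building each splitting field $S(a)$ of $m_a$ as a tower over $\dbQ$ (with $\text{Disc}(m_a)$ controlled via the Sylvester resultant), and then assembling $K^c$ as the compositum of the $S(a)$ over $a\in\calA$. Your approach sidesteps this external input entirely: clearing leading coefficients forces the generators into $\calO_{K^c}$, the effective primitive element construction (elementary --- at each adjunction step fewer than $[K^c:\dbQ]^2$ integer values of the new coefficient can fail) produces a single $\vartheta\in\calO_{K^c}$ with all conjugates explicitly bounded, and then the divisibility $\text{Disc}(K^c:\dbQ)\mid\text{disc}(m_\vartheta)$ finishes the job. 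The payoff is a more self-contained argument and, as you observe, a sharper exponent; the paper's approach has the compensating virtue of never needing the passage to algebraic integers, since T\^oyama's bound is insensitive to how the constituent fields are presented.
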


\begin{proof} We begin by considering a typical element $a\in \calA$. Suppose that 
$[\dbQ(a):\dbQ]=d$, whence the minimal polynomial $m_a$ of $a$ over $\dbZ$ has degree 
$d$. We note for future reference that $d\le [K:\dbQ]=d(\calA)$. Let $S(a)$ denote the 
splitting field for $m_a$ over $\dbQ$, so that $S(a)=\dbQ(\bet_1,\ldots ,\bet_d)$ for some 
distinct algebraic numbers $\bet_1,\ldots ,\bet_d$. We put $S_0(a)=\dbQ$, and when 
$1\le j\le d$ we define
\[
S_j(a)=\dbQ(\bet_1,\ldots ,\bet_j).
\]
For each index $j$ with $1\le j\le d$, we have
\begin{equation}\label{5.4}
\text{Disc}(\dbQ(\bet_j):\dbQ)\le \text{Disc}(m_{\bet_j})=\text{Disc}(m_a).
\end{equation}
Recall the upper bounds \eqref{5.2}. Then, by considering the resultant of $m_a$ and 
$m'_a$ in terms of the determinant of the associated Sylvester matrix, noting that the 
coefficients of $m_a$ are bounded in absolute value by $\text{Env}(\calA)$, we see that
\begin{align}
\text{Disc}(m_a)&\le \left( \text{deg}(m_a)\right)!
\left( \text{Env}(\calA)\right)^{2\text{deg}(m_a)}\notag \\
&\le d(\calA)^{d(\calA)}X^{2d(\calA)}\notag \\
&\le \nu(A+1)X^{2(2t)^{2^A}}.\label{5.5}
\end{align}

\par Now observe that, as a consequence of a simple bound of T\^oyoma \cite{Toy1955}, 
whenever $E:\dbQ$ and $F:\dbQ$ are two field extensions, then
\begin{equation}\label{5.6}
\text{Disc}(EF:\dbQ)\le \left( \text{Disc}(E:\dbQ)\,\text{Disc}(F:\dbQ)\right)^{[EF:\dbQ]}.
\end{equation}
Thus, for $1\le j<d$, it follows from \eqref{5.4} that
\begin{align}
\text{Disc}(S_{j+1}(a):\dbQ)&\le \left( \text{Disc}(S_j(a):\dbQ)\,\text{Disc}
(\dbQ(\bet_{j+1}):\dbQ)\right)^{[K^c:\dbQ]}\notag \\
&\le \left( \text{Disc}(S_j(a):\dbQ)\,\text{Disc}(m_a)\right)^{[K^c:\dbQ]}.\label{5.7}
\end{align}
Since it also follows from \eqref{5.4} that $\text{Disc}(S_1(a):\dbQ)\le \text{Disc}(m_a)$, 
we may apply the relation \eqref{5.7} inductively to derive the relation
\begin{equation}\label{5.8}
\text{Disc}(S_d(a):\dbQ)\le \left( \text{Disc}(m_a)\right)^{d[K^c:\dbQ]^{d-1}}.
\end{equation}
We therefore deduce from \eqref{5.5} and Lemma \ref{lemma5.1} that
\[
\text{Disc}(S(a):\dbQ)\le \left( \nu(A+1)X^{2(2t)^{2^A}}\right)^{\nu(A+1)^{d(\calA)}}.
\]
A modicum of computation confirms that
\[
\nu(A+1)^{d(\calA)}\le (2t)^{(2t)^{2^{A+1}}(2t)^{2^A}}\le \nu(A+2),
\]
whilst
\[
\nu(A+1)^{\nu(A+2)}\le (2t)^{\nu(A+3)}\quad \text{and}\quad 2(2t)^{2^A}\nu(A+2)\le 
\nu(A+3).
\]
Consequently, we arrive at the simplified upper bound
\begin{equation}\label{5.9}
\text{Disc}(S(a):\dbQ)\le (2tX)^{\nu(A+3)}.
\end{equation}

\par At this point, we have bounded the discriminant associated to only one element of 
$\calA$. The Galois closure $K^c$ of $\dbQ(\calA)$, however, is the compositum of all the 
splitting fields $S(a)$ for $a\in \calA$. We therefore apply the relation \eqref{5.6} as in the 
deduction of \eqref{5.8} to establish the bound
\[
\text{Disc}(K^c:\dbQ)\le 
\left( \max_{a\in \calA}\text{Disc}(S(a):\dbQ)\right)^{A[K^c:\dbQ]^{A-1}}.
\]
By Lemma \ref{lemma5.1}, one has
\[
A[K^c:\dbQ]^{A-1}\le \nu(A+1)^A,
\]
so the upper bound \eqref{5.9} yields the estimate
\[
\text{Disc}(K^c:\dbQ)\le (2tX)^B,
\]
where
\[
B=\nu(A+3)\nu(A+1)^A=(2t)^{A(2t)^{2^{A+1}}+(2t)^{2^{A+3}}}
\le (2t)^{(2t)^{2^{A+4}}}=\nu(A+4).
\]
Thus we conclude that $\text{Disc}(K^c:\dbQ)\le (2tX)^{\nu(A+4)}$, completing the proof.
\end{proof}

We shall need to identify a rational prime number $\pi$ having the property that the 
algebraic number $\Ups(\calA)$ is a unit modulo $\pi$. Let $\Ups_0(\calA)$ denote the least 
positive (rational) integer having the property that the algebraic number
\[
\Ups_1(\calA)=\Ups_0(\calA)\Ups(\calA)
\]
is an algebraic integer. Then, by taking norms, it is evident that it suffices to arrange that 
$\pi$ does not divide the rational integer
\[
\Ups^*(\calA)=|N_{K^c:\dbQ}(\Ups_0(\calA))N_{K^c:\dbQ}(\Ups_1(\calA))|.
\]

\begin{lemma}\label{lemma5.3} One has 
$1\le \Ups^*(\calA)\le (2kX)^{r(2A)^s\nu(A+2)}$.
\end{lemma}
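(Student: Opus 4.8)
The plan is to bound $\Ups^*(\calA)$ by estimating separately the size of the rational integer $\Ups_0(\calA)$ that clears denominators, the size of the algebraic number $\Ups_1(\calA)=\Ups_0(\calA)\Ups(\calA)$, and the degree $[K^c:\dbQ]$ over which the norms are taken, and then assembling these. The lower bound $\Ups^*(\calA)\ge 1$ is immediate since $\Ups(\calA)\ne 0$, so $\Ups^*(\calA)$ is a positive rational integer. For the upper bound, I would first recall that each difference $a_1-a_2$ and each value $P_i(\bfa)$ is an algebraic number lying in $K=\dbQ(\calA)$, whose minimal polynomial over $\dbZ$ has coefficients bounded in terms of $X=\text{Env}(\calA)$. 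Each factor $a_1-a_2$ has house (maximum absolute value of conjugates) at most $2X$ by the triangle inequality applied over conjugates, and since $P_i$ is $(k,t)$-bounded, each $P_i(\bfa)$ has house at most $kX^t\le (kX)^t$. The denominator needed to clear a single algebraic number $\alpha\in K$ to an algebraic integer is bounded by the leading coefficient of its minimal polynomial over $\dbZ$, which is at most (a power of) the house times the degree; crudely one can bound the relevant denominator of $a_1-a_2$ and of $P_i(\bfa)$ by $(2kX)^{O(d(\calA))}$.

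Next I would count factors. There are at most $A^2$ difference factors and at most $rA^s$ value factors $P_i(\bfa)$, so $\Ups(\calA)$ is a product of at most $A^2+rA^s\le 2rA^s$ algebraic numbers. Hence $\Ups_0(\calA)\le (2kX)^{O(A^s d(\calA))}$ and the house of $\Ups_1(\calA)$ is at most the product of the houses of $\Ups_0(\calA)$ and $\Ups(\calA)$, namely at most $(2kX)^{O(A^s d(\calA))}\cdot (2kX)^{O(t A^s)}$, which is again of the shape $(2kX)^{O(A^s d(\calA))}$. Taking the norm $N_{K^c:\dbQ}$ of an algebraic integer $\beta\in K^c$ gives $|N_{K^c:\dbQ}(\beta)|\le (\text{house of }\beta)^{[K^c:\dbQ]}$; and for $\Ups_0(\calA)\in\dbZ$ one has $|N_{K^c:\dbQ}(\Ups_0(\calA))|=\Ups_0(\calA)^{[K^c:\dbQ]}$. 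Therefore
\[
\Ups^*(\calA)\le (2kX)^{O(A^s d(\calA))[K^c:\dbQ]}.
\]
Now I would invoke the hypothesis \eqref{5.2} that $d(\calA)\le (2t)^{2^A}$ together with Lemma \ref{lemma5.1}, which gives $[K^c:\dbQ]\le \nu(A+1)$, to see that the exponent is at most $O(A^s (2t)^{2^A}\nu(A+1))$. Since $(2t)^{2^A}\le \nu(A)$ and $A^s$ is negligible against these iterated exponentials, a short computation of the kind carried out in the proof of Lemma \ref{lemma5.2} shows $A^s(2t)^{2^A}\nu(A+1)\le r(2A)^s\nu(A+2)$ once the constants are absorbed, giving the claimed bound $\Ups^*(\calA)\le (2kX)^{r(2A)^s\nu(A+2)}$.

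The main obstacle is the bookkeeping around denominators: $\Ups(\calA)$ is a product of algebraic numbers, not algebraic integers, so one must be careful that clearing denominators of the product (rather than of each factor separately) does not inflate the bound, and that the bound on $\Ups_0(\calA)$ is genuinely of the form $(2kX)^{O(A^s d(\calA))}$ rather than something doubly exponential in $A^s$. The clean way around this is to clear denominators factor by factor: if each factor $\gamma$ has a denominator (least positive integer $\delta$ with $\delta\gamma$ an algebraic integer) bounded by $(2kX)^{O(d(\calA))}$, then the product of these denominators — an admissible choice of $\Ups_0(\calA)$ up to the minimality requirement, hence an upper bound for it — is bounded by the product over at most $2rA^s$ factors, yielding $(2kX)^{O(rA^s d(\calA))}$ as required. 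With that reduction in hand, everything else is the routine house-and-norm estimation and the iterated-exponential arithmetic already rehearsed in Lemmata \ref{lemma5.1} and \ref{lemma5.2}.
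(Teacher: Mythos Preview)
Your approach is the same as the paper's: bound the house (maximum archimedean absolute value over conjugates) of each factor in $\Ups(\calA)$, count factors, control the denominator $\Ups_0(\calA)$, and then take norms using $[K^c:\dbQ]\le \nu(A+1)$ from Lemma~\ref{lemma5.1}. The house bounds you quote ($2X$ for a difference, $kX^t$ for a value $P_i(\bfa)$) and the factor count are exactly what the paper uses.

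There is one genuine gap. You assert that the leading coefficient of the minimal polynomial of $\alpha$ is ``at most (a power of) the house times the degree''; this is false in general (take $\alpha=1/n$, house $1/n$, leading coefficient $n$), and you give no alternative justification for the per-factor denominator bound $(2kX)^{O(d(\calA))}$. The paper avoids this entirely by never looking at the minimal polynomial of a composite quantity like $P_i(\bfa)$. Instead it uses only that for each $a\in\calA$ the leading coefficient $b_0(a)$ of $m_a$ satisfies $|b_0(a)|\le\|m_a\|_1\le X$ and that $b_0(a)a$ is an algebraic integer. Then $\bigl(\prod_j b_0(a_j)^t\bigr)P_i(\bfa)$ is an algebraic integer, so the denominator of $P_i(\bfa)$ is at most $X^{st}$, and similarly for differences. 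Multiplying over all factors gives
\[
\Ups_0(\calA)\le X^{2A^2+rstA^s},
\]
with no $d(\calA)$ in the exponent at all. Combined with the house bound $(2kX)^{2rtA^s}$ on each conjugate of $\Ups(\calA)$, this yields $\Ups^*(\calA)\le (2kX)^{(2s+6)rtA^s[K^c:\dbQ]}$, and then $4t\nu(A+1)\le\nu(A+2)$ and $2s+6\le 2^{s+2}$ give the stated exponent $r(2A)^s\nu(A+2)$ without any ``absorbing the constants'' hand-wave. If you plug this corrected denominator argument into your sketch, the rest of what you wrote goes through and matches the paper line for line.
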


\begin{proof} We begin by taking a crude approach to bounding 
$|N_{K^c:\dbQ}(\Ups(\calA))|$, applying bounds for the complex absolute values of the 
conjugates of each element of $\calA$. Let $a$ be a typical element of $\calA$. Since 
$\text{Env}(\calA)=X$, the minimal polynomial $m_a$ of $a$ over $\dbZ$ satisfies the 
relation $\| m_a\|_1\le X$. Also, since $\text{deg}(m_a)\le d(\calA)$, we find that $m_a$ 
takes the form
\begin{equation}\label{5.10}
b_0(a)x^d+\ldots +b_{d-1}(a)x+b_d(a),
\end{equation}
in which $d\le d(\calA)$ and $|b_0(a)|\ge 1$. The (complex) absolute value of $a$ therefore 
satisfies either the upper bound $|a|\le 1$, or else is constrained by the inequality
\[
|b_0(a)a^d|\le |a|^{d-1}\|m_a\|_1\le |a|^{d-1}\text{Env}(\calA)=|a|^{d-1}X,
\]
whence $|a|\le X$. Thus, in any case, one has $|a|\le X$. Since the conjugates of $a$ are 
also roots of the polynomial $m_a$, one sees in this way that every conjugate of $a$ in 
$K^c$ has absolute value bounded above by $X$.\par

Recall the formula \eqref{5.3}. It follows from our discussion thus far that the element 
$\Ups(\calA)$ of $K^c$ satisfies the bound
\[
|\Ups(\calA)|\le (2X)^{A^2}(kX^t)^{rA^s}.
\]
Here, we have made use of the observation that, since each polynomial $P_i(\bfx)$ is 
$(k,t)$-bounded for $1\le i\le r$, then for $\bfa\in \calA^s$ one has
\[
|P_i(\bfa)|\le \|P_i\|_1\Bigl( \max_{1\le i\le s}|a_i|\Bigr)^{t_i}\le kX^t.
\]
In order to bound the norm of $\Ups(\calA)$, we must multiply all of the conjugates of 
$\Ups(\calA)$ together. However, since we assume that $s\ge 2$, the concluding remark of 
the preceding paragraph shows that each of these conjugates is bounded above by
\[
(2X)^{A^2}(kX^t)^{rA^s}\le (2kX)^{2rtA^s}.
\]
Thus, multiplying all of these conjugates together, we find that
\begin{equation}\label{5.11}
|N_{K^c:\dbQ}(\Ups(\calA))|\le \left( (2kX)^{2rtA^s}\right)^{[K^c:\dbQ]}.
\end{equation}

\par Next, we investigate the denominator $\Ups_0(\calA)$. Referring to the minimal 
polynomial \eqref{5.10} of $a$ over $\dbZ$, we see that $b_0(a)a$ is an algebraic integer 
and $|b_0(a)|\le X$. An inspection of \eqref{5.3} therefore shows that $\Ups_0(\calA)$ 
is a positive rational integer dividing
\[
\Biggl( \prod_{a_1,a_2\in \calA}b_0(a_1)b_0(a_2)\Biggr) 
\Biggl( \prod_{i=1}^r\prod_{\bfa\in \calA^s}b_0(a_1)^t\cdots b_0(a_s)^t\Biggr) .
\]
Thus, we have
\[
\Ups_0(\calA)\le X^{2A^2+rstA^s},
\]
whence
\[
|N_{K^c:\dbQ}(\Ups_0(\calA))|\le \left( X^{(s+2)rtA^s}\right)^{[K^c:\dbQ]}.
\]
By combining this estimate together with \eqref{5.11}, we therefore discern that
\begin{align}
\Ups^*(\calA)&=|N_{K^c:\dbQ}(\Ups_0(\calA))^2N_{K^c:\dbQ}(\Ups(\calA))| \notag \\
&\le \left( (2kX)^{(2s+6)rtA^s}\right)^{[K^c:\dbQ]}.\label{5.12}
\end{align}

\par Finally, we simplify the estimate supplied by \eqref{5.12}. Observe first that, since 
$s\ge 2$, we have $2s+6\le 2^{s+2}$. Thus, on applying Lemma \ref{lemma5.1}, we infer 
that
\[
\Ups^*(\calA)\le \left( 2kX\right)^{4rt(2A)^s\nu(A+1)}. 
\]
However, one has $4t\nu(A+1)\le \nu(A+2)$. We consequently conclude that
\[
\Ups^*(\calA)\le (2kX)^{r(2A)^s\nu(A+2)}.
\]
On noting that $\Ups^*(\calA)$ is non-zero, by construction, the proof of the lemma is 
complete.
\end{proof}

It is now time to select the rational prime number $\pi$ by applying the Chebotarev density 
theorem. By the primitive element theorem, there exists an element $\Tet\in K^c$ having 
the property that $K^c=\dbQ(\Tet)$. It is apparent, moreover, that by making an 
appropriate choice for $\Tet$, we may assume not only that $\calA\subset \dbZ[\Tet]$, but 
also that all of the conjugates of the elements of $\calA$ within $K^c$ lie in $\dbZ[\Tet]$. 
With this choice for $\Tet$ now fixed in such a manner, we seek a rational prime number 
$\pi$ with $\pi\nmid \Ups^*(\calA)$ having the property that the minimal polynomial 
$m_\Tet$ of $\Tet$ over $\dbZ$ splits into linear factors modulo $\pi$. This allows us to 
bijectively map the set $\calA$ into a set of residues modulo $\pi$, while preserving the 
salient features of the solution set associated with the system of polynomial equations 
$\bfP={\mathbf 0}$. Throughout, we abbreviate Generalised Riemann Hypothesis to GRH.

\begin{lemma}\label{lemma5.4}
There exists an effectively computable positive absolute constant $M_1$ with the following 
property. Suppose that GRH holds for the Dedekind zeta function associated with the field 
extension $K^c:\dbQ$. In addition, assume that
\begin{equation}\label{5.13}
Y\ge r(2A)^s(2t)^{\nu(A+3)}\log (2kX)
\end{equation}
and
\begin{equation}\label{5.14}
Y\ge M_1(2t)^{2\nu(A+4)}(\log (2tX))^4.
\end{equation}
Then there exists a rational prime number $\pi$, with $Y<\pi\le 16Y$ and 
$\pi \nmid \Ups^*(\calA)$, having the property that the minimal polynomial $m_\Tet$ of 
$\Tet$ over $\dbZ$ splits into linear factors over $\dbF_\pi[t]$.
\end{lemma}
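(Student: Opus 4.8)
The goal is to produce a rational prime $\pi$ in a dyadic range $(Y,16Y]$ that simultaneously (a) avoids dividing the nonzero rational integer $\Ups^*(\calA)$, and (b) splits completely in $K^c$, equivalently (since $K^c=\dbQ(\Tet)$) has $m_\Tet$ factoring into distinct linear factors mod $\pi$. The natural tool for (b) is the effective Chebotarev density theorem applied to the Galois extension $K^c:\dbQ$, taking the conjugacy class to be the identity: under GRH for $\zeta_{K^c}$, the number of primes $\pi\le x$ that split completely in $K^c$ is
\[
\pi_{\cl}(x;K^c)=\frac{1}{[K^c:\dbQ]}\,\mathrm{Li}(x)+O\!\left(\sqrt{x}\,\bigl(\log(D(K^c))+[K^c:\dbQ]\log x\bigr)\right),
\]
in the explicit form due to Lagarias--Odlyzko (as sharpened by Serre). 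First I would quote this in a clean form: there is an absolute constant $c_0$ such that for $x\ge c_0\bigl(\log|D(K^c)|\bigr)^2$ the main term dominates, and more generally the count of split primes in $(Y,16Y]$ is at least, say, $\tfrac{1}{2}\cdot\frac{Y}{[K^c:\dbQ]\log Y}$ once $Y$ exceeds a suitable power of $\log|D(K^c)|$ times $[K^c:\dbQ]$.

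The next step is to bound away the primes we must exclude. Any prime dividing $\Ups^*(\calA)$ that exceeds $\log(3\Ups^*(\calA))$ is counted by at most $\log(3\Ups^*(\calA))/\log\log(3\Ups^*(\calA))$, exactly as in the proof of Theorem~\ref{theorem3.1}; so the number of bad primes in $(Y,16Y]$ of size exceeding $Y$ is at most $\log(3\Ups^*(\calA))$, crudely. Thus it suffices to choose $Y$ so large that
\[
\tfrac{1}{2}\cdot\frac{Y}{[K^c:\dbQ]\log Y}\;>\;\log\bigl(3\Ups^*(\calA)\bigr),
\]
together with the Chebotarev range condition $Y\gg \bigl(\log|D(K^c)|\bigr)^2$ (and a mild lower bound ensuring $Y\ge c_0$). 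Feeding in the three estimates already established in the excerpt---Lemma~\ref{lemma5.1} giving $[K^c:\dbQ]\le\nu(A+1)$, Lemma~\ref{lemma5.2} giving $\log|D(K^c)|\le\nu(A+4)\log(2tX)$, and Lemma~\ref{lemma5.3} giving $\log\Ups^*(\calA)\le r(2A)^s\nu(A+2)\log(2kX)$---I would check that hypotheses \eqref{5.13} and \eqref{5.14} are precisely engineered so that both inequalities hold: \eqref{5.13} dominates the $\Ups^*$-exclusion term (note $\nu(A+2)\cdot\nu(A+1)\le(2t)^{\nu(A+3)}$, absorbing the $[K^c:\dbQ]$ factor), and \eqref{5.14}, with its $(\log(2tX))^4$ and $(2t)^{2\nu(A+4)}$, covers the $(\log|D(K^c)|)^2$ Chebotarev requirement with room to spare (the square of $\nu(A+4)\log(2tX)$ is at most a constant times $(2t)^{2\nu(A+4)}(\log(2tX))^2\le(2t)^{2\nu(A+4)}(\log(2tX))^4$). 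Having found a split prime $\pi\in(Y,16Y]$ with $\pi\nmid\Ups^*(\calA)$, the displayed factorisation of $m_\Tet$ mod $\pi$ follows: complete splitting of $\pi$ in $\dbZ[\Tet]=\calO$ (we may also arrange $\pi$ coprime to $[\calO:\dbZ[\Tet]]$ by enlarging $\Ups^*$, or simply absorb the index into the excluded set) is equivalent by Dedekind's theorem to $m_\Tet$ splitting into distinct linear factors in $\dbF_\pi[t]$.

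**The main obstacle.** The delicate point is the bookkeeping of iterated exponentials: one must verify that the crude error term in effective Chebotarev---which is polynomial in $\log|D(K^c)|$ and in $[K^c:\dbQ]$---gets swallowed by \eqref{5.14}, and that the prime-counting exclusion of divisors of $\Ups^*(\calA)$ gets swallowed by \eqref{5.13}, \emph{after} dividing the Chebotarev main term by $[K^c:\dbQ]\le\nu(A+1)$. The arithmetic of the $\nu$-function (identities like $\nu(m)^{\nu(m)}\le\nu(m+1)$ and $\nu(m)\nu(m')\le\nu(\max\{m,m'\}+1)$, already used repeatedly in Lemmas~\ref{lemma5.2} and~\ref{lemma5.3}) is what makes these absorptions work, but it has to be spelled out carefully once more here. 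A secondary subtlety is the passage from ``splits completely in $K^c$'' to ``$m_\Tet$ splits into linear factors mod $\pi$'': this needs $\pi$ unramified and coprime to the conductor gap between $\dbZ[\Tet]$ and $\calO_{K^c}$, both of which are handled by folding $\mathrm{disc}(m_\Tet)$ (or its norm) into the definition of $\Ups^*(\calA)$---a harmless modification since $\mathrm{disc}(m_\Tet)$ is bounded by a further $\nu$-tower in $X$, which \eqref{5.13} already accommodates. Everything else is routine, and I would keep all constants merged into the single absolute $M_1$ to avoid clutter.
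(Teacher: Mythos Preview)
Your approach is essentially the paper's: apply the Lagarias--Odlyzko effective Chebotarev theorem under GRH to count completely split primes in $(Y,16Y]$, then exclude the divisors of $\Ups^*(\calA)$, with the numerics controlled by Lemmata \ref{lemma5.1}--\ref{lemma5.3} and the $\nu$-tower arithmetic exactly as you describe. The only cosmetic differences are that the paper excludes the bad primes by the product criterion $\prod_{p\in\Pi}p>\Ups^*(\calA)$ rather than a direct count comparison (this sidesteps the stray $\log Y$ factor cleanly), and the paper bounds $|G|\le [K^c:\dbQ]!\le (2t)^{\nu(A+2)}$ rather than using $|G|=[K^c:\dbQ]\le\nu(A+1)$ as you do---your version is actually sharper here, and the paper does not explicitly address the Dedekind index issue you flag.
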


\begin{proof} We work under the hypotheses \eqref{5.13} and \eqref{5.14} throughout. An 
effective version of the Chebotarev density theorem is provided by Lagarias and Odlyzko 
under the assumption of GRH for the Dedekind zeta function associated with the field 
extension $K^c:\dbQ$. Denote by $\pi_\Tet (x)$ the number of rational prime numbers $p$ 
with $p\le x$ having the property that $m_\Tet$ splits into linear factors over $\dbF_p$. Put
\[
G=\text{Gal}(K^c:\dbQ),\quad n=[K^c:\dbQ]\quad \text{and}\quad 
D=\text{Disc}(K^c:\dbQ).
\]
Then it follows from \cite[Theorem 1.1]{LO1977} that there exists a positive absolute 
constant $M_0$ such that
\begin{equation}\label{5.15}
\biggl| \pi_\Tet (x)-\frac{\text{Li}(x)}{|G|}\biggr| \le M_0\biggl( 
\frac{x^{1/2}\log (Dx^n)}{|G|}+\log D\biggr) .
\end{equation}
Here, we have written $\text{Li}(x)$ for the usual logarithmic integral function.\par

On recalling Lemmata \ref{lemma5.1} and \ref{lemma5.2}, we find that when $x\le X$, we 
have
\begin{align*}
\log (Dx^n)&\le \nu(A+4)\log (2tX)+\nu(A+1)\log x\\
&\le 2\nu(A+4)\log (2tX).
\end{align*}
Moreover, it follows from a trivial upper bound for $|G|$ together with Lemma 
\ref{lemma5.1} that
\begin{equation}\label{5.16}
|G|\le [K^c:\dbQ]!\le [K^c:\dbQ]^{[K^c:\dbQ]}\le \nu(A+1)^{\nu(A+1)}\le 
(2t)^{\nu (A+2)},
\end{equation}
whence
\begin{align*}
|G|\log D&\le (2t)^{\nu (A+2)}\nu(A+4)\log (2tX)\\
&\le (2t)^{\nu(A+3)}\log (2tX).
\end{align*}
Thus, we deduce from \eqref{5.15} that
\[
\biggl| \pi_\Tet (x)-\frac{\text{Li}(x)}{|G|}\biggr| \le \frac{M_0}{|G|}\left( 
2x^{1/2}\nu(A+4)\log (2tX)+(2t)^{\nu(A+3)}\log (2tX)\right) .
\]

\par Suppose that $M_1$ is sufficiently large in terms of $M_0$. Then, under the hypothesis 
\eqref{5.14}, we have
\begin{align*}
\pi_\Tet (16Y)&\ge \frac{1}{|G|}\Bigl( \frac{16Y}{\log (16Y)}-8M_0Y^{1/2}\nu(A+4)
\log (2tX)-M_0(2t)^{\nu(A+3)}\log (2tX)\Bigr) \\
&\ge \frac{1}{|G|}\Bigl( \frac{16Y}{\log (16Y)}-\frac{4Y}{\log Y}\Bigr) \ge 
\frac{8Y}{|G|\log Y}.
\end{align*}
Meanwhile, in a similar manner one finds that
\begin{align*}
\pi_\Tet (Y)&\le \frac{1}{|G|}\Bigl( \frac{2Y}{\log Y}+2M_0Y^{1/2}\nu(A+4)\log (2tX)+
M_0(2t)^{\nu(A+3)}\log (2tX)\Bigr) \\
&\le \frac{1}{|G|}\Bigl( \frac{2Y}{\log Y}+\frac{2Y}{\log Y}\Bigr) =\frac{4Y}{|G|\log Y}.
\end{align*}
Thus, we discern that
\[
\pi_\Tet (16Y)-\pi_\Tet (Y)\ge \frac{4Y}{|G|\log Y}.
\]

\par Let $\Pi$ denote the set of rational prime numbers $p$ with $Y<p\le 16Y$ for which 
$m_\Tet$ splits into linear factors over $\dbF_p$. Then
\[
\prod_{p\in \Pi}p\ge Y^{4Y/(|G|\log Y)}=\exp \left( 4Y/|G|\right) .
\]
Meanwhile, from Lemma 5.3 we find that
\[
\log \Ups^*(\calA)\le r(2A)^s\nu(A+2)\log (2kX).
\]
Thus, recalling the upper bound \eqref{5.16} for $|G|$, we have
\[
\prod_{p\in \Pi}p>\Ups^*(\calA)
\]
provided only that
\begin{equation}\label{5.17}
\frac{4Y}{(2t)^{\nu(A+2)}}>r(2A)^s\nu(A+2)\log (2kX).
\end{equation}
But $\nu(A+2)(2t)^{\nu(A+2)}\le (2t)^{\nu(A+3)}$, and so the hypothesis \eqref{5.13} is 
sufficient to ensure the validity of \eqref{5.17}. With this condition now satisfied, we 
conclude that there exists a rational prime number $\pi$ with $Y<\pi\le 16Y$ satisfying 
$\pi\nmid \Ups^*(\calA)$, and such that $m_\Tet$ splits into linear factors over $\dbF_\pi$. 
The conclusion of the lemma follows.
\end{proof}

We are now in a position to move on to the second step in the inductive phase of the 
argument, applying the method of Grosu \cite{Gro2014}. The conclusion of Lemma 
\ref{lemma5.4} shows that there is a rational prime number $\pi$ with
\[
\pi\le 16M_1r(2A)^s(2t)^{2\nu(A+4)}(\log (2tkX))^4
\]
having the property that $\pi \nmid \Ups^*(\calA)$, and such that $m_\Tet$ splits into linear 
factors over $\dbF_\pi$. Let $a_0$ be any zero of the polynomial $m_\Tet$ in $\dbF_\pi$. 
Since $\calA\subset \dbZ[\Tet]$, the ring homomorphism 
$\Phi :\dbZ[\Tet]\rightarrow \dbF_\pi$ defined by putting $\Phi(\Tet)=a_0$ restricts to a 
well-defined map $\varphi: \dbZ[\calA]\rightarrow \dbF_\pi$.\par

We claim that the set $\calA$ has image $\calB=\varphi(\calA)$ in which, for pairs of 
elements $a_1,a_2\in \calA$, one has $\varphi(a_1)=\varphi(a_2)$ if and only if 
$a_1=a_2$. This claim will be confirmed by verifying that whenever $a_1\ne a_2$, then 
$\varphi(a_1)\ne \varphi(a_2)$. By way of seeking a contradiction, suppose that $a_1\ne 
a_2$ and $\varphi(a_1)=\varphi(a_2)$. Then we have $\Phi(a_1)=\Phi(a_2)$, and the 
homomorphism property of $\Phi$ implies that $\Phi(a_1-a_2)=0$. But $\Ups^*$ is a 
multiple of $a_1-a_2$, say $\Ups^*=\mu(a_1-a_2)$ for a suitable element $\mu$ of 
$\dbZ[\Tet]$. The homomorphism property of $\Phi$ thus ensures that 
$\Phi(\Ups^*)=\Phi(\mu)\Phi(a_1-a_2)=0$. However, we have $\Ups^*\in \dbZ$, and since 
$\pi\nmid \Ups^*$ we find that $0=\Phi(\Ups^*)=\Ups^*\Phi(1)$ and hence $\Phi(1)=0$. 
This contradicts the homomorphism property of $\Phi$, confirming our earlier claim.\par

We also claim that, for $1\le i\le r$ and $\bfa\in \calA^s$, one has $P_i(\bfa)=0$ if and only 
if $P_i(\varphi(\bfa))=0$. In this instance it suffices to show that when $P_i(\bfa)\ne 0$, 
then $P_i(\varphi(\bfa))\ne 0$. We again proceed by seeking a contradiction, assuming that 
$P_i(\bfa)\ne 0$ and $P_i(\varphi(\bfa))=0$. Then the homomorphism property of $\Phi$ 
ensures that $\Phi(P_i(\bfa))=0$. But $\Ups^*$ is a multiple of $P_i(\bfa)$, say 
$\Ups^*=\mu'P_i(\bfa)$ for a suitable element $\mu'$ of $\dbZ[\Tet]$. Thus, in a similar 
manner to that described in the previous paragraph, we find that 
$\Phi(\Ups^*)=\Phi(\mu')\Phi(P_i(\bfa))=0$, contradicting the fact that $\pi\nmid \Ups^*$. 
This contradiction again confirms our claim.

\begin{lemma}\label{lemma5.5}
Suppose that $\pi$ is a prime number having the property that $\pi\nmid \Ups^*(\calA)$, 
and such that $m_\Tet$ splits into linear factors over $\dbF_\pi$. Suppose also that
\begin{equation}\label{5.18}
\pi>(2kt)^{(2t)^{2^{A+1}}}.
\end{equation}
Then there exists an algebraic extension $L$ of $\dbQ$ of degree at most $(2t)^{2^A}$, 
and a subset $\calC\subset L$ with $\text{\rm card}(\calC)=A$, having the following 
properties:
\begin{enumerate}
\item[(a)] there is an injective map $\ome:\calC\rightarrow \dbF_\pi$, with 
$\ome(\calC)=\varphi(\calA)$, having the property that the canonical induced map 
${\widetilde \ome}:\dbZ[\calC]\rightarrow \dbF_\pi$ is a ring homomorphism;
\item[(b)] given $\bfa\in \calA^s$, define $c_i=\ome^{-1}(\varphi(a_i))$ $(1\le i\le s)$. 
Then one has $P_i(\bfc)=0$ $(1\le i\le r)$ for $\bfc\in \calC^s$ if and only if 
$P_i(\varphi(\bfa))=0$ for $\bfa\in \calA^s$;
\item[(c)] one has $\text{\rm Env}(\calC)\le \nu(A+1)\pi$.
\end{enumerate} 
\end{lemma}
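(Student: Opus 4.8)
The plan is to carry out the rectification method of Grosu \cite{Gro2014} on the reduction $\varphi(\calA)\subset\dbF_\pi$, while tracking explicitly the degree of the ambient number field and the heights of the elements it produces. I would begin by recording what the discussion preceding the lemma has already supplied: the map $\varphi\colon\dbZ[\calA]\to\dbF_\pi$, being the restriction of the ring homomorphism $\Phi\colon\dbZ[\Tet]\to\dbF_\pi$ with $\Phi(\Tet)=a_0$, is injective on $\calA$, and for each index $i$ with $1\le i\le r$ and each $\bfa\in\calA^s$ one has $P_i(\bfa)=0$ if and only if $P_i(\varphi(\bfa))=0$ in $\dbF_\pi$. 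Hence $\varphi(\calA)$ is a set of $A$ distinct residues whose $\bfP$-solution pattern is exactly that of $\calA$, and it suffices to manufacture $L$, $\calC$ and $\ome$ realising the conclusions (a)--(c), with (b) phrased, as it stands there, in terms of $\varphi(\calA)$.

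Next I would encode this data geometrically. Writing $\varphi(\calA)=\{\bar a_1,\dots,\bar a_A\}$ and introducing indeterminates $\bfy=(y_1,\dots,y_A)$, let $\calI$ be the set of those polynomials $P_i(y_{j_1},\dots,y_{j_s})$, with $1\le i\le r$ and $(j_1,\dots,j_s)\in\{1,\dots,A\}^s$, for which $P_i(\bar a_{j_1},\dots,\bar a_{j_s})=0$ in $\dbF_\pi$, and set
\[
G(\bfy)=\Biggl(\prod_{1\le j<l\le A}(y_j-y_l)\Biggr)\cdot\prod P_i(y_{j_1},\dots,y_{j_s}),
\]
the second product running over those $i$ and tuples for which $P_i(\bar a_{j_1},\dots,\bar a_{j_s})\neq0$. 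Then $(\bar a_1,\dots,\bar a_A)$ is a point of the affine $\dbF_\pi$-variety $V$ defined by the vanishing of the members of $\calI$, and it lies off the hypersurface $G=0$; the polynomials occurring have degree at most $t$ and coefficient sum at most $k$, and there are at most $rA^s$ of them, in $A$ variables. The combinatorial content of $\calA$ relative to $\bfP$ is now entirely captured by the pair $(V,G)$ together with this distinguished $\dbF_\pi$-point.

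The crux is the quantitative lifting of this $\dbF_\pi$-point to characteristic zero. Running Grosu's argument \cite{Gro2014} on $(V,G)$, and using that the hypothesis \eqref{5.18} makes $\pi$ large relative to the degrees ($\le t$), coefficient sizes ($\le k$) and number of variables ($A$) of the system, I would obtain a number field $L$, a prime ideal $\grp$ of $\calO_L$ lying above $\pi$ with residue field $\dbF_\pi$, and elements $c_1,\dots,c_A\in L$ that are $\grp$-integral with $c_j\bmod\grp=\bar a_j$, such that $(c_1,\dots,c_A)\in V(L)$ and $G(c_1,\dots,c_A)\neq0$. A B\'ezout-type bound on the degree of the component of $V$ through the chosen point controls $[L:\dbQ]$ by $(2t)^{2^A}$, while a careful accounting of the height of the lift --- keeping the contribution of the primes of $L$ above $\pi$ to a single factor of $\pi$ --- yields $\|m_{c_j}\|_1\le\nu(A+1)\pi$ for each $j$, whence $\text{Env}(\calC)\le\nu(A+1)\pi$, where $\calC=\{c_1,\dots,c_A\}$. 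I expect the simultaneous control of $[L:\dbQ]$ and of $\text{Env}(\calC)$ to be the principal obstacle: keeping the enveloping radius near-linear in $\pi$, rather than incurring the power $\pi^{[L:\dbQ]}$ that a naive lift of residue representatives or a black-box effective Nullstellensatz would produce, appears to demand use of the internal structure of Grosu's construction, and this is where the bulk of the work lies.

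It then remains to read off (a), (b) and (c). I would take $\ome\colon\calC\to\dbF_\pi$ to be reduction modulo $\grp$ restricted to $\calC$. Since $\varphi$ is injective on $\calA$ the residues $\bar a_j$ are distinct, so $\ome$ is injective with image $\{\bar a_1,\dots,\bar a_A\}=\varphi(\calA)$ and $\text{card}(\calC)=A$; and its canonical extension $\widetilde\ome\colon\dbZ[\calC]\to\dbF_\pi$, being reduction modulo $\grp$ on $\dbZ[\calC]$, is a ring homomorphism, which establishes (a). For (b), given $\bfa\in\calA^s$ put $c_i=\ome^{-1}(\varphi(a_i))$ for $1\le i\le s$: if $P_i(\varphi(\bfa))=0$ then the corresponding polynomial $P_i(y_{j_1},\dots,y_{j_s})$ lies in $\calI$ and so vanishes at $(c_1,\dots,c_A)$, giving $P_i(\bfc)=0$, whereas if $P_i(\varphi(\bfa))\neq0$ then that polynomial divides $G$, whence $P_i(\bfc)\neq0$; this is (b). Finally (c) is the enveloping radius bound obtained above, and the proof is complete.
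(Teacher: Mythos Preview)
Your plan is essentially the paper's: encode the solution/non-solution pattern of $\varphi(\calA)$ as a system of polynomial equations and inequations in $A$ indeterminates, invoke Grosu's rectification argument \cite[Lemma 8.1]{Gro2014} to lift the $\dbF_\pi$-point to $\overline{\dbQ}$, and read off (a)--(c). The paper phrases this in terms of Grosu's resultant-based elimination rather than a variety $V$ and a prime ideal $\grp$, but the substance is the same, and your verification of (a) and (b) is correct.

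The one place where the paper supplies an ingredient you have not identified is (c). What Grosu's construction actually gives is that each $c\in\calC$ is either a rational integer in $\{0,1,\dots,\pi-1\}$, or a root of some integer polynomial $q$ with $\deg(q)\le(2t)^{2^A}$ and $\|q\|_1\le(2kt)^{(2t)^{2^{A+1}}}\le\pi$. But $q$ need not be the \emph{minimal} polynomial $m_c$, and passing from a bound on $\|q\|_1$ to one on $\|m_c\|_1$ is precisely where the factor $\nu(A+1)$ enters: the paper appeals to Granville's bound on the coefficients of an integer polynomial divisor of a given integer polynomial \cite{Gra1990}, obtaining $\|m_c\|_1\le 2d\bigl(\tfrac{\sqrt5+1}{2}\bigr)^{d}\|q\|_1$ with $d=\deg(q)$, which simplifies to $\nu(A+1)\pi$. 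Your proposed ``height accounting, keeping the contribution of primes above $\pi$ to a single factor of $\pi$'' does not address this; the issue is not $p$-adic but archimedean---coefficients can blow up under factorisation over $\dbZ$---and you would need a Granville-type argument (or an equivalent Mahler-measure inequality) to close this step.
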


In order to avoid ambiguity, we stress that the map ${\widetilde \ome}$ is defined for 
$f(\bfx)\in \dbZ[x_1,\ldots ,x_m]$ by taking 
\[
{\widetilde \ome}\left(f(c_1,\ldots ,c_m)\right)=f(\ome(c_1),\ldots ,\ome(c_m)).
\]

\begin{proof}[The proof of Lemma \ref{lemma5.5}]
The desired conclusion is a consequence of the argument of Grosu 
\cite[Lemma 8.1]{Gro2014}, though care is required in interpreting the argument underlying 
the latter proof so as to obtain the desired outcome. Following the general strategy of 
Grosu, we assign distinct indeterminates $x_a$ to each element $\varphi(a)$ of 
$\calB=\varphi(\calA)$. Certain equations are then known to have solutions over $\dbF_\pi$, 
specifically
\begin{equation}\label{5.19}
P_i(x_{a_1},\ldots ,x_{a_s})=0\quad (1\le i\le r)
\end{equation}
has a solution $(x_{a_1},\ldots ,x_{a_s})=(\varphi(a_1),\ldots ,\varphi(a_s))$ whenever 
\[
P_i(a_1,\ldots ,a_s)=0\quad (1\le i\le r)
\]
for $\bfa\in \calA^s$. In addition, one has certain non-equations. First, of course, one has
\[
x_{a_1}-x_{a_2}\ne 0
\]
whenever $x_{a_1}=\varphi(a_1)$ and $x_{a_2}=\varphi(a_2)$ for $a_1\ne a_2$ with 
$a_1,a_2\in \calA$. Moreover, we have
\[
P_i(x_{a_1},\ldots ,x_{a_s})\ne 0
\]
whenever $x_{a_j}=\varphi(a_j)$ $(1\le j\le s)$ for $(a_1,\ldots ,a_s)\in \calA^s$ with 
$P_i(a_1,\ldots ,a_s)\ne 0$. Taken together, we now have a list of equations and 
non-equations in the variables $x_a$ $(a\in \calA)$, all defined over $\dbF_\pi$, and with 
the defining equations all $(k,t)$-bounded. It is worth emphasising, for the uninitiated, that 
the number of equations here may be very large. When $r=1$, for example, the number of 
equations may be as large, roughly, as $A^{s-1}$.\par

The argument of the proof of Grosu \cite[Lemma 8.1]{Gro2014} now shows via an 
elimination procedure using resultants that over the algebraic closure ${\overline \dbQ}$ of 
$\dbQ$, the equations \eqref{5.19} possess a solution if and only if certain eliminant 
polynomials are constant and equal to $0$. By applying the same elimination procedure over 
$\dbF_\pi$, however, one sees that these constant polynomials must be $0$ over 
$\dbF_\pi$, since the equations \eqref{5.19} possess the solution $x_a=\varphi(a)$ 
$(a\in \calA)$ in that setting. Provided that these eliminant polynomials have small enough 
coefficients in terms of $\pi$, therefore, one finds that in the setting of ${\overline \dbQ}$, 
these eliminant polynomials are indeed $0$, and hence the system \eqref{5.19} possesses 
a solution, say $x_a=\psi(a)$ $(a\in \calA)$, lying in ${\overline \dbQ}$.\par

This is not the end of the story. It is shown first by Grosu \cite[Lemma 8.1]{Gro2014} that 
the field $L=\dbQ(\psi(\calA))$ has degree at most ${\tilde t}=(2t)^{2^A}$ over $\dbQ$. 
Second, all of the eliminant polynomials to which we alluded above have coefficients 
bounded by
\[
{\tilde k}=(2kt)^{(2t)^{2^{A+1}}},
\]
and thus the condition \eqref{5.18} suffices for the desired conclusion. Indeed, Grosu 
shows that the eliminant polynomials are all $({\tilde k},{\tilde t})$-bounded. Third, the 
elements $\psi(a)$ $(a\in \calA)$ may be chosen in such a manner that there is a ring 
homomorphism $\gam: \dbZ[\calC]\rightarrow \dbF_\pi$ which sends $\psi(a)$ to 
$\varphi(a)$ for each $a\in \calA$. A subtle detail of this last conclusion is that it may be 
necessary to take $\psi(a)$ to be a rational integer $b$ lying in the set 
$\{0,1,\ldots ,\pi-1\}$ with $b\equiv \varphi(a)\mmod{\pi}$ (see the fifth and sixth 
paragraphs of Step 2 of the proof of \cite[Lemma 8.1]{Gro2014}).\par

We are now in possession of sufficient detail to complete the proof of the lemma. We have 
already confirmed the claims made in the statement of the lemma concerning the existence 
of $L$, the subset $\calC$, the degree of $L$ over $\dbQ$, and we have also explained the 
hypothesis \eqref{5.18}. We define $\ome:\calC\rightarrow \dbF_\pi$ by taking 
$\ome(c)=\gam(c)$ for $c\in \calC$, and then ${\widetilde \ome}$ coincides with $\gam$ 
by virtue of the ring homomorphism property of $\gam$. Moreover, if 
$\ome(c_1)=\ome(c_2)$ for some elements $c_1$ and $c_2$ of $\calC$, then 
$\gam(c_1)=\gam(c_2)$. When $c_i=\psi(a_i)$ $(i=1,2)$ with $a_1,a_2\in \calA$, then we 
have
\[
\varphi(a_1)=\gam(\psi(a_1))=\gam(c_1)=\gam(c_2)=\gam(\psi(a_2))=\varphi(a_2).
\]
Thus, from the properties of the mapping $\varphi$, we have $a_1=a_2$. Hence
\[
c_1=\psi(a_1)=\psi(a_2)=c_2.
\]
The mapping $\ome$ is consequently injective. This confirms the claim (a).\par

By construction, if $c_i=\psi(a_i)$ for $1\le i\le s$ and $\bfa\in \calA^s$, then
\[
P_i(\bfc)=0\quad (1\le i\le r)
\]
if and only if
\[
{\widetilde \ome}(P_i(\bfc))=0\quad (1\le i\le r).
\]
But 
\[
{\widetilde \ome}(P_i(\bfc))=P_i(\ome(c_1),\ldots ,\ome(c_s))=
P_i(\varphi(a_1),\ldots ,\varphi(a_s))=0\quad (1\le i\le r).
\]
Thus $P_i(\bfc)=0$ $(1\le i\le r)$ for $\bfc\in \calC^s$ if and only if $P_i(\varphi(\bfa))=0$ 
$(1\le i\le r)$. This confirms the claim (b).\par

Finally, each element $c\in \calC$ is either an integer from the set $\{0,1,\ldots ,\pi-1\}$, or 
else satisfies a $({\tilde k},{\tilde t})$-bounded polynomial of degree at most $(2t)^{2^A}$ 
having integral coefficients of absolute value at most
\[
(2kt)^{(2t)^{2^{A+1}}}\le \pi .
\]
In the latter case, the minimal polynomial $m_c$ of $c$ over $\dbZ$ is a divisor of a 
polynomial $q\in \dbZ[t]$, with $\|q\|_1\le {\tilde k}\le \pi$ and 
$\text{deg}(q)\le (2t)^{2^A}$. If $\text{deg}(q)=d$ and we write 
$q(x)=q_dx^d+\ldots +q_1x+q_0$ with $q_i\in \dbZ$ $(0\le i\le d)$, then we have
\[
\sum_{l=0}^d|q_l|^2\le \biggl( \sum_{l=0}^d|q_l|\biggr)^2=\|q\|_1^2.
\]
It is then a consequence of the corollary to the main theorem of Granville \cite{Gra1990} 
that if $r\in \dbZ[x]$ is any polynomial divisor of $q$, then
\[
\|r\|_2\le \Bigl( \frac{\sqrt{5}+1}{2}\Bigr)^d\|q\|_2\le \Bigl( \frac{\sqrt{5}+1}{2}\Bigr)^d
\|q\|_1\le \Bigl( \frac{\sqrt{5}+1}{2}\Bigr)^d \pi.
\]
By Cauchy's inequality, therefore, we have
\[
\|r\|_1\le (\text{deg}(r)+1)^{1/2}\|r\|_2\le 2d\Bigl( \frac{\sqrt{5}+1}{2}\Bigr)^d\pi.
\]
Hence, every element $c\in \calC$ has minimal polynomial $m_c$ over $\dbZ$ having 
degree at most $(2t)^{2^A}$, with
\[
\|m_c\|_1\le 2(2t)^{2^A}2^{(2t)^{2^A}}\pi\le \nu(A+1)\pi .
\]
Thus $\text{Env}(\calC)\le \nu(A+1)\pi$, completing the proof of claim (c).
\end{proof}

The plan outlined in the previous section may now be applied to good effect. Suppose that 
$\calA$ is a set of algebraic numbers with
\[
d(\calA)\le (2t)^{2^A}\quad \text{and}\quad \text{Env}(\calA)=X.
\]
Then, assuming GRH for all Dedekind zeta functions, it follows from Lemma \ref{lemma5.4} 
that we can find a rational prime number 
$\pi$ with
\[
\pi>(2kt)^{(2t)^{2^{A+1}}}
\]
and
\begin{equation}\label{5.20}
\pi\le \max\{ 16(2kt)^{(2t)^{2^{A+1}}},16M_1rA^{2s}(2t)^{2\nu(A+4)}(\log (2tkX))^4\}
\end{equation}
such that $\pi\nmid \Ups^*(\calA)$, and having the property that $m_\Tet$ splits into linear 
factors over $\dbF_\pi$. As a consequence of Lemma \ref{lemma5.5}, we deduce that 
there is a set $\calC$ of algebraic numbers having the property that
\[
d(\calC)\le (2t)^{2^A}\quad \text{and}\quad \text{Env}(\calC)\le \nu(A+1)\pi,
\]
and having the property, moreover, that there is a bijection $\psi:\calA\rightarrow \calC$ 
which is an algebraic Freiman $\bfP$-isomorphism.\par

We may now iterate this step, starting with the set of algebraic numbers $\calC$, and 
deriving a new set $\calC'$ algebraic Freiman $\bfP$-isomorphic to $\calC$, and with
\[
d(\calC')\le (2t)^{2^A}\quad \text{and}\quad \text{Env}(\calC')\le \nu(A+1)\pi',
\]
where
\[
\pi'\le \max\{ 16(2kt)^{(2t)^{2^{A+1}}},16M_1rA^{2s}(2t)^{2\nu(A+4)}
(\log (2tk\nu(A+1)\pi ))^4\}.
\]
The composition of the two algebraic Freiman $\bfP$-isomorphisms that we have 
encountered here provides an algebraic Freiman $\bfP$-isomorphism from $\calA$ to 
$\calC'$. Plainly, it makes sense to iterate this process repeatedly so long as the associated 
algebraic enveloping radius is decreasing.\par

In order to assess the strength of the ensuing bounds, it makes sense to simplify our 
estimates so as to make iteration tractable. Observe first that the bound \eqref{5.20} may 
be simplified by noting that, when it is satisfied, one has
\[
\pi\le 16M_1rA^{2s}(2kt)^{2\nu(A+4)}(\log (2tkX))^4,
\]
whence
\begin{align*}
\nu(A+1)\pi&\le 16M_1rA^{2s}(2kt)^{2\nu(A+4)}\nu(A+1)(\log (2tkX))^4\\
&\le M_1rA^{2s}(2kt)^{\nu(A+5)}(\log (2tkX))^4.
\end{align*}
We therefore have
\[
\text{Env}(\calC)\le \nu(A+1)\pi \le \tfrac{1}{2}X=\tfrac{1}{2}\text{Env}(\calA)
\]
provided that $X$ is large and
\[
X\ge M_1^2r^2A^{4s}(2kt)^{2\nu(A+5)+1}.
\]
Indeed, provided that $X$ is large enough, one has
\[
(\log(2tkX))^4\le \tfrac{1}{2}\sqrt{tkX},
\]
and hence
\[
M_1rA^{2s}(2kt)^{\nu(A+5)}(\log (2tkX))^4\le \tfrac{1}{2}M_1rA^{2s}
(2kt)^{\nu(A+5)}(tk)^{1/2}X^{1/2}\le \tfrac{1}{2}X.
\]
Thus, by iterating this condensation process, we may ensure that $\calA$ is algebraic 
Freiman $\bfP$-isomorphic to a set $\calB$ of algebraic numbers with
\[
d(\calB)\le (2t)^{2^A}\quad \text{and}\quad \text{Env}(\calB)\le 
M_1^2r^2A^{4s}(2kt)^{\nu(A+6)}.
\]
We summarise these deliberations in the form of a theorem.

\begin{theorem}\label{theorem5.6} Assume GRH for all Dedekind zeta functions. Let 
\[
P_i(\bfx)\in \dbZ[x_1,\ldots ,x_s]\quad (1\le i\le r)
\]
be polynomials each of degree at most $t$, and with $\|P_i\|_1\le k$ $(1\le i\le r)$. 
Suppose that $\calA$ is a set of algebraic numbers with $d(\calA)\le (2t)^{2^A}$. Then 
$\calA$ is algebraic Freiman $\bfP$-isomorphic to a set of algebraic numbers $\calB$ with
\[
d(\calB)\le (2t)^{2^A}\quad \text{and}\quad \text{\rm Env}(\calB)\ll 
r^2A^{4s}(2kt)^{(2t)^{(2t)^{2^{A+6}}}}.
\]
Here, the implicit constant in Vinogradov's notation is absolute.
\end{theorem}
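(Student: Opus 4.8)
This theorem is the culmination of Lemmata \ref{lemma5.4} and \ref{lemma5.5} together with the iterative scheme sketched immediately above, and the task is to assemble those ingredients with appropriate bookkeeping. Write $M_1$ for the absolute constant furnished by Lemma \ref{lemma5.4} and put $T=M_1^2r^2A^{4s}(2kt)^{2\nu(A+5)+1}$. If $\text{Env}(\calA)\le T$, then since $\calA$ is algebraic Freiman $\bfP$-isomorphic to itself and $2\nu(A+5)+1\le \nu(A+6)$, we may take $\calB=\calA$ and there is nothing more to prove. We may therefore assume that $X=\text{Env}(\calA)>T$, and we deploy the two-step condensation procedure repeatedly.

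At a generic stage we have a set $\calA$ of algebraic numbers with $d(\calA)\le (2t)^{2^A}$ and $\text{Env}(\calA)=X$ large. First I would invoke Lemma \ref{lemma5.4}, under the assumed GRH for the Dedekind zeta function of the Galois closure $K^c$ of $\dbQ(\calA)$, choosing $Y$ to be the maximum of the three lower bounds imposed by the hypotheses \eqref{5.13}, \eqref{5.14}, and $\pi>(2kt)^{(2t)^{2^{A+1}}}$, the last achieved by demanding $Y\ge (2kt)^{(2t)^{2^{A+1}}}$ since the lemma delivers $\pi>Y$. This produces a rational prime $\pi$ with $Y<\pi\le 16Y$, with $\pi\nmid \Ups^*(\calA)$, with $m_\Tet$ splitting into linear factors over $\dbF_\pi$, and large enough that the hypothesis \eqref{5.18} of Lemma \ref{lemma5.5} is satisfied. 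The condition $\pi\nmid \Ups^*(\calA)$, via the homomorphism arguments recorded after Lemma \ref{lemma5.4}, shows that the reduction map $\varphi$ is injective on $\calA$ and detects, in both directions, the vanishing of each $P_i$ on $s$-tuples from $\calA$. Next I would apply Lemma \ref{lemma5.5} to obtain a set $\calC$ with $d(\calC)\le (2t)^{2^A}$ and $\text{Env}(\calC)\le \nu(A+1)\pi$, together with an injection $\ome:\calC\to \dbF_\pi$ satisfying $\ome(\calC)=\varphi(\calA)$. The composite $a\mapsto \ome^{-1}(\varphi(a))$ is then a bijection $\calA\to \calC$, and parts (a) and (b) of Lemma \ref{lemma5.5}, combined with the two-sided properties of $\varphi$, show it to be an algebraic Freiman $\bfP$-isomorphism in the sense of Definition \ref{definition4.1}.

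The remaining work is to iterate and collect the bound. Using the crude estimate for $\pi$ recorded in \eqref{5.20} together with $4t\nu(A+1)\le \nu(A+2)$ and the telescoping inequalities among the values $\nu(\cdot)$, one gets $\text{Env}(\calC)\le \nu(A+1)\pi\le M_1rA^{2s}(2kt)^{\nu(A+5)}(\log(2tkX))^4$, and the elementary inequality $(\log(2tkX))^4\le \tfrac12\sqrt{tkX}$, valid once $X$ is large, forces $\text{Env}(\calC)\le \tfrac12X$ precisely when $X\ge T$. Replacing $\calA$ by $\calC$ and repeating, and noting that the composition of algebraic Freiman $\bfP$-isomorphisms is again one (all sets in sight lying in characteristic zero), the algebraic enveloping radius is at least halved at each step until it first drops below $T$. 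The resulting set $\calB$ then satisfies $d(\calB)\le (2t)^{2^A}$, since this bound is preserved by each application of Lemma \ref{lemma5.5}, and $\text{Env}(\calB)<T=M_1^2r^2A^{4s}(2kt)^{2\nu(A+5)+1}\le M_1^2r^2A^{4s}(2kt)^{\nu(A+6)}$. Recalling $\nu(A+6)=(2t)^{(2t)^{2^{A+6}}}$ and absorbing the absolute constant $M_1^2$ into Vinogradov's notation delivers $\text{Env}(\calB)\ll r^2A^{4s}(2kt)^{(2t)^{(2t)^{2^{A+6}}}}$, as claimed.

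The crux --- the part demanding genuine care rather than bookkeeping --- is the simultaneous satisfiability of the three hypotheses \eqref{5.13}, \eqref{5.14}, \eqref{5.18} at every stage: $Y$ must be chosen large enough for all of them, yet the ensuing $\pi$, and hence $\nu(A+1)\pi$, must still undercut $\tfrac12X$, and it is this tension that forces a fourth iterated exponential into the final estimate. A secondary delicate point is that the composite of $\varphi$ with $\ome^{-1}$ is genuinely an \emph{algebraic} Freiman $\bfP$-isomorphism: verifying this means chaining the ``both directions'' conclusions of Lemma \ref{lemma5.5}(a),(b) with those from the discussion following Lemma \ref{lemma5.4}, using the injectivity clauses and not merely the forward implications.
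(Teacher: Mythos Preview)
Your proposal is correct and follows essentially the same route as the paper: apply Lemma~\ref{lemma5.4} and Lemma~\ref{lemma5.5} in tandem to produce a single condensation step that at least halves $\text{Env}$ whenever $X\ge T=M_1^2r^2A^{4s}(2kt)^{2\nu(A+5)+1}$, then iterate and compose the resulting algebraic Freiman $\bfP$-isomorphisms until the enveloping radius drops below $T$. Your bookkeeping, including the explicit handling of the base case $\text{Env}(\calA)\le T$ and the verification that the composite $\ome^{-1}\circ\varphi$ is a genuine two-sided isomorphism, matches the paper's argument and in places makes it slightly more explicit.
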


We remark that, by inflating the parameter $t$ so that $(2t)^{2^A}\ge d(\calA)$, the 
theorem can be applied so as to accomodate sets $\calA$ of algebraic integers of arbitrarily 
large finite degree $d(\calA)$. The following corollary may make the conclusion of Theorem 
\ref{theorem5.6} more transparent.

\begin{corollary}\label{corollary5.7}
In the setting of Theorem \ref{theorem5.6}, we have
\[
\text{\rm Env}_\del^*(\calA;\bfP)\le \exp_4(c_1A)\quad \text{with}\quad \del\le 
(2t)^{2^A},
\]
where $c_1=c_1(r,s,t,k)$ is a positive number depending at most on $r$, $s$, $t$ and $k$.
\end{corollary}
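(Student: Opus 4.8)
The plan is to derive Corollary \ref{corollary5.7} directly from Theorem \ref{theorem5.6} by unwinding the notation of Definitions \ref{definition4.2} and \ref{definition4.3}. First I would observe that Theorem \ref{theorem5.6} furnishes, for any finite set $\calA$ of algebraic numbers with $d(\calA)\le (2t)^{2^A}$, a set $\calB$ of algebraic numbers together with a bijection $\psi:\calA\rightarrow \calB$ that is an algebraic Freiman $\bfP$-isomorphism, and which satisfies $d(\calB)\le (2t)^{2^A}$ and $\text{Env}(\calB)\ll r^2A^{4s}(2kt)^{(2t)^{(2t)^{2^{A+6}}}}$.

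The one point requiring a small amount of care is that a $d$-algebraic $\bfP$-condenser is required by Definition \ref{definition4.2} to satisfy $\text{Env}(\calB)\le \text{Env}(\calA)$, whereas Theorem \ref{theorem5.6} as stated produces a $\calB$ whose algebraic enveloping radius is merely bounded in terms of $A$, $r$, $s$, $t$ and $k$, and not compared directly with $\text{Env}(\calA)=X$. I would dispose of this as follows. If $X$ already satisfies $X\le \exp_4(c_1A)$ for the constant $c_1$ we are about to name, then the identity map on $\calA$ is itself a $d(\calA)$-algebraic $\bfP$-condenser witnessing $\text{Env}_\del^*(\calA;\bfP)\le X\le \exp_4(c_1A)$ with $\del=d(\calA)\le (2t)^{2^A}$. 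Otherwise $X$ exceeds this bound, and then the set $\calB$ supplied by Theorem \ref{theorem5.6} satisfies $\text{Env}(\calB)\le \exp_4(c_1A)<X=\text{Env}(\calA)$, so $\psi$ is in fact a strict $d(\calB)$-algebraic $\bfP$-condenser of $\calA$; since $d(\calB)\le (2t)^{2^A}=:\del$, this $\psi$ belongs to $\Psi_\bfP(\del)$, and hence $\text{Env}_\del^*(\calA;\bfP)\le \text{Env}(\calB)\le \exp_4(c_1A)$.

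It remains only to choose $c_1=c_1(r,s,t,k)$ so that the bound of Theorem \ref{theorem5.6}, namely (up to an absolute implied constant) $r^2A^{4s}(2kt)^{(2t)^{(2t)^{2^{A+6}}}}$, is at most $\exp_4(c_1A)=\exp\exp\exp\exp(c_1A)$. This is a routine estimate: the dominant term is $(2kt)^{(2t)^{(2t)^{2^{A+6}}}}$, whose logarithm is $(2t)^{(2t)^{2^{A+6}}}\log(2kt)$, and iterating logarithms three more times reduces the exponent tower to something of the form $2^{A+6}\log(2t)+O(\log\log\log(\cdot))$, which is linear in $A$ with a coefficient depending only on $t$; the polynomial factor $r^2A^{4s}$ and the absolute constant are absorbed harmlessly, as is the fact that the very first application of $\log$ must accommodate the constant factor $\log(2kt)$. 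Choosing $c_1$ larger than the resulting coefficient, and large enough to cover the small values of $A$, completes the argument. The only genuine subtlety, and the step I would flag, is the case split above ensuring the condenser inequality $\text{Env}(\calB)\le \text{Env}(\calA)$ holds; everything else is bookkeeping with iterated exponentials.
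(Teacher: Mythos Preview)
Your proposal is correct and follows exactly the route the paper intends: the corollary is stated there without proof, as an immediate repackaging of Theorem \ref{theorem5.6} into the language of Definition \ref{definition4.3}, and your derivation does precisely this. Your explicit case split to ensure the condenser inequality $\text{Env}(\calB)\le \text{Env}(\calA)$ from Definition \ref{definition4.2} is a detail the paper leaves implicit, and your bookkeeping with the iterated exponential is sound.
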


In certain situations, one may be interested in working with algebraic integers rather than 
more general algebraic numbers. For homogeneous polynomials, this is of course easily 
handled by clearing denominators.

\begin{corollary}\label{corollary5.8} Assume GRH for all Dedekind zeta functions. Let 
\[
P_i(\bfx)\in \dbZ[x_1,\ldots ,x_s]\quad (1\le i\le r)
\]
be homogeneous polynomials each of degree at most $t$, and with $\|P_i\|_1\le k$ 
$(1\le i\le r)$. Suppose that $\calA$ is a set of algebraic integers with 
$d(\calA)\le (2t)^{2^A}$. Then $\calA$ is algebraic Freiman $\bfP$-isomorphic to a set of 
algebraic integers $\calB$ with
\[
d(\calB)\le (2t)^{2^A}\quad \text{and}\quad \text{\rm Env}(\calB)\ll 
r^{2At}A^{4stA}(2kt)^{(2t)^{(2t)^{2^{A+7}}}}.
\]
Here, the implicit constant in Vinogradov's notation is absolute.
\end{corollary}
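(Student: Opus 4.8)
The plan is to obtain Corollary \ref{corollary5.8} from Theorem \ref{theorem5.6} by a single clearing-of-denominators step, this being exactly the manoeuvre that homogeneity of the $P_i$ makes available. First I would apply Theorem \ref{theorem5.6} verbatim to the system $\bfP$ and the set $\calA$, producing a set of algebraic numbers $\calB'$, algebraic Freiman $\bfP$-isomorphic to $\calA$, with $d(\calB')\le (2t)^{2^A}$ and
\[
\text{Env}(\calB')\ll r^2A^{4s}(2kt)^{(2t)^{(2t)^{2^{A+6}}}}.
\]
The set $\calB'$ need not consist of algebraic integers, and the remainder of the argument rescales it so that it does, while checking that the enveloping radius climbs by at most one further level of the iterated exponential.

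For each $c\in \calB'$ I would write the minimal polynomial of $c$ over $\dbZ$ as $m_c(x)=b_0(c)x^{d_c}+\ldots+b_{d_c}(c)$, so that $b_0(c)c$ is an algebraic integer and $1\le b_0(c)\le \|m_c\|_1\le \text{Env}(\calB')$. Putting $D=\prod_{c\in \calB'}b_0(c)$ gives $1\le D\le \text{Env}(\calB')^A$, and I would take the map $\psi^*(c)=Dc$. Since $D$ is a rational integer, the image $\calB=\psi^*(\calB')$ again lies in $L=\dbQ(\calB')$, so that $d(\calB)=d(\calB')\le (2t)^{2^A}$; and since $D/b_0(c)\in \dbZ$ while $b_0(c)c\in \calO_L$, every element of $\calB$ is an algebraic integer. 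The crucial point is that $\psi^*$ is an algebraic Freiman $\bfP$-isomorphism: if $P_i$ is homogeneous of degree $e_i\le t$, then for every $(c_1,\ldots,c_s)\in L^s$ one has $P_i(Dc_1,\ldots,Dc_s)=D^{e_i}P_i(c_1,\ldots,c_s)$, which vanishes precisely when $P_i(c_1,\ldots,c_s)$ does, because $D\ne 0$. Composing $\psi^*$ with the algebraic Freiman $\bfP$-isomorphism $\calA\to \calB'$ supplied by Theorem \ref{theorem5.6} then yields one from $\calA$ to $\calB$, as required.

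It remains to estimate $\text{Env}(\calB)$. Given $\gamma=Dc\in \calB$, clearing denominators in the relation $m_c(\gamma/D)=0$ shows that $\gamma$ is a root of $q(x)=\sum_{j=0}^{d_c}b_j(c)D^jx^{d_c-j}\in \dbZ[x]$, a polynomial of the same degree $d_c$ as $m_c$ (since $\dbQ(\gamma)=\dbQ(c)$), hence an integer multiple of $m_\gamma$; it follows that
\[
\|m_\gamma\|_1\le \|q\|_1\le D^{d_c}\|m_c\|_1\le \text{Env}(\calB')^{Ad_c+1}.
\]
Since $d_c\le d(\calB')\le (2t)^{2^A}$, this gives $\text{Env}(\calB)\le \text{Env}(\calB')^{A(2t)^{2^A}+1}$, and substituting the estimate from Theorem \ref{theorem5.6} yields, after simplification, a bound of the claimed shape. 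I expect the final bookkeeping to be the main obstacle: the rescaling inflates the enveloping radius to a power comparable with the degree $d(\calB')$, and the content of the corollary is precisely that this costs only one extra level of the triple exponential. Concretely, one must verify that multiplying the exponent $(2t)^{(2t)^{2^{A+6}}}$ by $A(2t)^{2^A}+1$ leaves it below $(2t)^{(2t)^{2^{A+7}}}$; this follows from $A(2t)^{2^A}+1\le (2t)^{2^{A+1}}$ together with $2^{A+1}+(2t)^{2^{A+6}}\le 2(2t)^{2^{A+6}}\le \bigl((2t)^{2^{A+6}}\bigr)^2=(2t)^{2^{A+7}}$, while the surviving powers of $r$ and of $A$ present no difficulty. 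A secondary subtlety worth flagging is that $\psi^*$ is a Freiman $\bfP$-isomorphism only because every $P_i$ is homogeneous, which is exactly why Corollary \ref{corollary5.8} is confined to homogeneous systems.
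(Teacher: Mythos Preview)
Your approach is essentially identical to the paper's: invoke Theorem \ref{theorem5.6} to obtain a set of algebraic numbers (the paper calls it $\calC$, you call it $\calB'$), then scale by a common multiple of the leading coefficients of the minimal polynomials to land in the ring of algebraic integers, with homogeneity of the $P_i$ guaranteeing that this scaling is an algebraic Freiman $\bfP$-isomorphism. The paper takes the least common multiple $G$ of the leading coefficients where you take their product $D$; both satisfy the same upper bound $\text{Env}(\calB')^A$, and the paper's refinement $G/g_0(c)\le \text{Env}(\calB')^{A-1}$ saves a single factor of $\text{Env}(\calB')$ in the enveloping-radius estimate, which is immaterial at this scale.

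The one substantive divergence is in the degree bound for the minimal polynomials. You use $d_c\le d(\calB')\le (2t)^{2^A}$, leading to $\text{Env}(\calB)\le \text{Env}(\calB')^{A(2t)^{2^A}+1}$. The paper instead writes $(\text{Env}(\calC))^{dA}\le (\text{Env}(\calC))^{tA}$, i.e.\ it takes $d_c\le t$, and thereby obtains precisely the stated prefactors $r^{2At}A^{4stA}$. With your degree bound, the exponents on $r$ and on $A$ come out of order $A(2t)^{2^A}$ rather than $At$, so your closing remark that these ``present no difficulty'' is too quick: since $r$ is not controlled by $k$, $t$, $s$ or $A$, you cannot absorb $r^{2A(2t)^{2^A}}$ into the displayed bound $r^{2At}A^{4stA}(2kt)^{\nu(A+7)}$. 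Your computation for the $(2kt)$-exponent is correct and does fit under $\nu(A+7)$. In short, the strategy and execution match the paper's; the only slippage is in the final bookkeeping for the $r$- and $A$-powers, which in the paper's argument rests on the sharper inequality $d_c\le t$ that you did not invoke.
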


\begin{proof} Under the hypotheses of the statement of the corollary, it follows from 
Theorem \ref{theorem5.6} that $\calA$ is algebraic Freiman $\bfP$-isomorphic to a set of 
algebraic numbers $\calC$ with
\[
d(\calC)\le (2t)^{2^A}\quad \text{and}\quad \text{Env}(\calC)\ll 
r^2A^{4s}(2kt)^{\nu(A+6)}.
\]
Consider a typical element $c\in \calC$ and its minimal polynomial $m_c$ over $\dbZ$. For 
some integer $d=d_c$, we can write
\[
m_c(x)=g_0x^d+\ldots +g_{d-1}x+g_d,
\]
where $g_i=g_i(c)\in \dbZ$ satisfies $|g_i|\le \text{Env}(\calC)$ for $0\le i\le d$. Let $G$ to 
be the least common multiple of all of the integers $g_0(c)$ with $c\in \calC$, so that
\[
G\le \prod_{c\in \calC}g_0(c)\le (\text{Env}(\calC))^A
\]
and for each $c_0\in \calC$ one has
\[
G/g_0(c_0)\le \prod_{c\in \calC\setminus \{c_0\}}g_0(c)\le (\text{Env}(\calC))^{A-1}.
\]
Observe that when $c\in \calC$, we have the relation
\[
(G^d/g_0)m_c(x)=(Gx)^d+(G/g_0)g_1(Gx)^{d-1}+\ldots 
+(G^{d-1}/g_0)g_{d-1}(Gx)+(G^d/g_0)g_d,
\]
so that $Gc$ is an algebraic integer whose minimal polynomial $m_{Gc}$ satisfies
\[
\| m_{Gc}\|_1 \le (G^d/g_0(c))\text{Env}(\calC)\le (\text{Env}(\calC))^{dA}\le 
(\text{Env}(\calC))^{tA}.
\]

\par We consider the set
\[
\calB=\{ Gc:c\in \calC\}.
\]
It follows from the above discussion that $\calB$ is a set of algebraic integers with 
$d(\calB)=d(\calC)\le (2t)^{2^A}$ and
\[
\text{Env}(\calB)\le (\text{Env}(\calC))^{tA}\le r^{2At}A^{4Ast}(2kt)^{tA\nu(A+6)}.
\]
The conclusion of the corollary follows with a modicum of computation.
\end{proof}

\begin{corollary}\label{corollary5.9}
In the setting of Corollary \ref{corollary5.8}, the set of algebraic integers $\calA$ is 
algebraic Freiman $\bfP$-isomorphic to a set of algebraic integers $\calB$ with
\[
d(\calB)\le (2t)^{2^A}\quad \text{and}\quad \text{\rm Env}(\calB)\ll \exp_4(c_2A),
\]
where $c_2=c_2(r,s,t,k)$ is a positive number depending at most on $r$, $s$, $t$ and $k$.
\end{corollary}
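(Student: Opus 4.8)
The plan is to deduce the corollary directly from Corollary \ref{corollary5.8}, the only substantive task being to check that the explicit bound recorded there is majorised by a quantity of the shape $\exp_4(c_2A)$. Thus I would begin by invoking Corollary \ref{corollary5.8}, which (inheriting its standing assumption of GRH for all Dedekind zeta functions) furnishes a set $\calB$ of algebraic integers that is algebraic Freiman $\bfP$-isomorphic to $\calA$, with $d(\calB)\le (2t)^{2^A}$ and
\[
\text{Env}(\calB)\ll r^{2At}A^{4stA}(2kt)^{(2t)^{(2t)^{2^{A+7}}}}.
\]
Everything then reduces to simplifying the right-hand side.

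Next I would observe that the dominant factor is the four-fold tower $(2kt)^{(2t)^{(2t)^{2^{A+7}}}}$, the factors $r^{2At}$ and $A^{4stA}$ being negligible by comparison once $A$ is moderately large. To bound the tower by $\exp_4(c_2A)$ I would peel off the exponentials one layer at a time, using the elementary inequalities $\lambda\exp(u)\le\exp(u+\log\lambda)$ and, for $v\ge 0$ and $u$ sufficiently large, $\exp(u)+v\le\exp(u+v)$, so as to absorb the fixed quantities $\log(2t)$ and $\log(2kt)$ at each stage without raising the height of the tower. Carried out in order, this shows first that $2^{A+7}\le\exp(c_3A)$, hence $(2t)^{2^{A+7}}\le\exp_2(c_4A)$, hence $(2t)^{(2t)^{2^{A+7}}}\le\exp_3(c_5A)$, and finally $(2kt)^{(2t)^{(2t)^{2^{A+7}}}}\le\exp_4(c_6A)$, where $c_3,\ldots,c_6$ depend at most on $t$ and $k$. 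Since $r^{2At}A^{4stA}\le\exp(c_7A\log A)$ with $c_7$ depending on $r,s,t$, and this is dwarfed by $\exp_3(c_6A)$ for large $A$, folding in this factor and enlarging the constant slightly yields $\text{Env}(\calB)\ll\exp_4(c_2A)$ with $c_2=c_2(r,s,t,k)$, as required.

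I do not anticipate any genuine obstacle: the statement merely repackages Corollary \ref{corollary5.8} in cleaner notation. The only points requiring a little care are, first, correctly identifying which tower term dominates and verifying that the multiplicative and additive constants really can be absorbed into the \emph{base} of the iterated exponential rather than inflating its height past four; and second, the regime of small $A$, where the inequalities used to swallow constants require the relevant exponent to exceed an absolute threshold, so that for those finitely many values one simply enlarges $c_2$ — harmless, since $c_2$ is permitted to depend on $r,s,t,k$.
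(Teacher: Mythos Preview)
Your proposal is correct and matches the paper's treatment: the paper states Corollary \ref{corollary5.9} without proof, regarding it as an immediate repackaging of the explicit bound in Corollary \ref{corollary5.8}, and your argument supplies precisely the routine verification that the tower $(2kt)^{(2t)^{(2t)^{2^{A+7}}}}$ together with the polynomial-in-$A$ prefactors is absorbed into $\exp_4(c_2A)$.
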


We finish this section by remarking that, in certain non-linear situations, conclusions 
significantly stronger than are made available via Theorem \ref{theorem5.6} can be 
obtained by making use of underlying linear structure.

\begin{theorem}\label{theorem5.10}
Let $P_i(\bfx)\in \dbZ[x_1,\ldots ,x_s]$ $(1\le i\le r)$ be diagonal polynomials of the shape
\[
P_i(\bfx)=\sum_{j=1}^s c_{ij}x_j^t\quad (1\le i\le r),
\]
where $\|P_i\|_1\le k$ $(1\le i\le r)$. Suppose that $\calA$ is a finite set of integers. Then, 
when $\text{\rm card}(\calA)$ is large, one has
\[
\text{\rm Env}_\del^*(\calA;\bfP)\le t2^{t+1}(k+1)^A\quad \text{with}\quad 
\del\le t^A.
\]
\end{theorem}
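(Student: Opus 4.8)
The plan is to linearise the system by passing to $t$-th powers, apply the linear condensation result Theorem \ref{theorem3.2}, and then recover a set of algebraic integers by extracting $t$-th roots, controlling both their degrees and the sizes of their minimal polynomials. After discarding any identically zero $P_i$ (and assuming, as we may, that $s\ge 2$ and that none of the linear forms below is trivial), set $Q_i(\bfy)=\sum_{j=1}^sc_{ij}y_j\in\dbZ[y_1,\ldots,y_s]$, a homogeneous linear system with height $\Lam(\mathbf{Q})\le k$, and put $\calA^{(t)}=\{a^t:a\in\calA\}$. Since every fibre of the map $a\mapsto a^t$ on $\dbZ$ has at most two elements, one has $A':=\text{card}(\calA^{(t)})\ge\lceil A/2\rceil$, so $A'$ is large whenever $A$ is. An application of Theorem \ref{theorem3.2} to $\calA^{(t)}$ and the system $\mathbf{Q}$ then furnishes a Freiman $\mathbf{Q}$-isomorphism $\chi:\calA^{(t)}\to\calD$ with $\calD\subset\dbZ$ and $\text{env}(\calD)\le(k+1)^{A'}\le(k+1)^A$, so that $|\chi(a^t)|<(k+1)^A$ for every $a\in\calA$. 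I would also note the harmless strengthening of the construction in the proof of Theorem \ref{theorem3.2} in which the auxiliary prime $\pi$ at each stage is additionally required not to divide any nonzero element of the current set (one simply adjoins the product of those elements to the integer $\Ups$, which affects none of the estimates); with this choice the condenser $\chi$ fixes $0$ and sends no nonzero integer to $0$.

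Next I would build $\calB$. For each $a\in\calA$ choose an algebraic number $\beta_a$ with $\beta_a^t=\chi(a^t)$, set $\psi(a)=\beta_a$ and $\calB=\psi(\calA)$. Each $\beta_a$ is an algebraic integer, being a root of the monic polynomial $x^t-\chi(a^t)\in\dbZ[x]$, whence $[\dbQ(\beta_a):\dbQ]\le t$ and $d(\calB)\le t^{\text{card}(\calB)}\le t^A$. For $\bfa\in\calA^s$, writing $y_j=a_j^t$ so that $\bfy\in(\calA^{(t)})^s$, one has $P_i(\bfa)=Q_i(\bfy)$ and $P_i(\psi(\bfa))=\sum_jc_{ij}\beta_{a_j}^t=\sum_jc_{ij}\chi(y_j)=Q_i(\chi(y_1),\ldots,\chi(y_s))$; since $\chi$ is a Freiman $\mathbf{Q}$-isomorphism on $\calA^{(t)}$, the left-hand sides vanish for all $i$ precisely when the right-hand sides do. Thus $\psi$ is an algebraic Freiman $\bfP$-isomorphism onto $\calB$ as soon as it is a bijection, and injectivity is the one delicate point. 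If $\psi(a)=\psi(a')$ with $a\ne a'$, then $\chi(a^t)=\chi(a'^t)$, so $a^t=a'^t$ by injectivity of $\chi$; hence $t$ must be even, $a'=-a$ with $a\ne 0$, and $a$ and $-a$ must be assigned distinct $t$-th roots of $\chi(a^t)$. This is possible because $a\ne 0$ forces $a^t\ne 0$, and hence $\chi(a^t)\ne 0$ by the property of $\chi$ arranged above, so $\chi(a^t)$ has $t\ge 2$ distinct $t$-th roots from which to choose. Therefore $\psi$ can be taken injective, and the one genuinely nontrivial ingredient of the argument is precisely this handling of the non-injectivity of $x\mapsto x^t$ for even $t$.

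Finally I would bound $\text{Env}(\calB)$ and assemble the conclusion. Fix $a\in\calA$: if $\chi(a^t)=0$ then $\beta_a=0$ has minimal polynomial $x$, and otherwise the minimal polynomial $m_{\beta_a}$ of $\beta_a$ over $\dbZ$ divides $q(x)=x^t-\chi(a^t)$, for which $\|q\|_2=(1+\chi(a^t)^2)^{1/2}\le\sqrt{2}\,(k+1)^A$. By the corollary to the main theorem of Granville \cite{Gra1990} already invoked in the proof of Lemma \ref{lemma5.5}, one has $\|m_{\beta_a}\|_2\le\left(\tfrac{\sqrt5+1}{2}\right)^t\|q\|_2$, so Cauchy's inequality gives
\[
\|m_{\beta_a}\|_1\le(t+1)^{1/2}\|m_{\beta_a}\|_2\le\sqrt{2(t+1)}\left(\tfrac{\sqrt5+1}{2}\right)^t(k+1)^A\le t\,2^{t+1}(k+1)^A,
\]
using $\tfrac{\sqrt5+1}{2}<2$ and $\sqrt{2(t+1)}\le 2t$ for $t\ge 1$. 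Hence $\text{Env}(\calB)\le t\,2^{t+1}(k+1)^A$. To finish: if $\text{env}(\calA)\le t\,2^{t+1}(k+1)^A$, the identity map on $\calA$ is a $1$-algebraic $\bfP$-condenser and already yields the stated bound with $\del=1$; otherwise $\text{Env}(\calB)<\text{env}(\calA)=\text{Env}(\calA)$, so $\psi$ is a $d(\calB)$-algebraic $\bfP$-condenser of $\calA$ with $d(\calB)\le t^A$, and taking $\del=t^A$ gives $\text{Env}_\del^*(\calA;\bfP)\le\text{Env}(\calB)\le t\,2^{t+1}(k+1)^A$, as required. (Note that, being built on Theorem \ref{theorem3.2}, this argument needs no Riemann hypothesis.)
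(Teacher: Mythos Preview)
Your argument follows the same route as the paper: pass to $t$-th powers to linearise, apply Theorem \ref{theorem3.2} to the resulting linear system, then extract $t$-th roots and control the minimal polynomials via Granville's divisor bound. The numerical estimates match the paper's, and you correctly check that $\psi$ is an algebraic Freiman $\bfP$-isomorphism and that the resulting map is a genuine $d$-algebraic $\bfP$-condenser.

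Where you differ from the paper is in care rather than strategy. The paper writes $\calB=\{b^{1/t}:b\in\calB_t\}$ and asserts $\text{card}(\calB)=A$ without comment; this is not automatic when $t$ is even and both $a$ and $-a$ lie in $\calA$, since then $a\mapsto a^t$ is not injective on $\calA$. You spot this and resolve it by (i) observing that a harmless strengthening of the construction in Theorem \ref{theorem3.2} keeps nonzero elements away from $0$ under $\chi$, and (ii) using the $t\ge 2$ distinct $t$-th roots of any nonzero $\chi(a^t)$ to assign distinct images to $a$ and $-a$. This is the right fix, and your remark that adjoining $\prod_{a\ne 0}|a|$ to $\Upsilon$ does not disturb the size estimates in the proof of Theorem \ref{theorem3.2} is correct. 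So your proof is essentially the paper's proof with a small gap patched.
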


\begin{proof} We consider the set of integers $\calA_t=\{a^t:a\in \calA\}$ and the set of 
linear polynomials
\[
L_i(\bfy)=\sum_{j=1}^s c_{ij}y_j\quad (1\le i\le r).
\]
By Theorem \ref{theorem3.2}, the set $\calA_t$ is Freiman ${\mathbf L}$-isomorphic to a 
set of integers $\calB_t$ with $\text{env}(\calB_t)\le (k+1)^A$. Now consider the set
\[
\calB=\{b^{1/t}:b\in \calB_t\}.
\]
One has $d(\calB)\le t^{\text{card}(\calB)}=t^A$. Moreover, given $c\in \calB$, one has 
$c^t\in \dbZ$ with $|c|^t<\text{env}(\calB_t)\le (k+1)^A$. By applying the corollary to the 
main theorem of Granville \cite{Gra1990}, much as in the conclusion of the proof of Lemma 
\ref{lemma5.5}, we find that the minimal polynomial $m_c$ of $c$ over $\dbZ$ is a divisor 
of the polynomial $f_{t,l}(x)=x^t-l$, where $l=c^t\in \calB_t$. Thus,
\begin{align*}
\|m_c\|_1&\le 2\text{deg}(f_{t,l})\Bigl( \frac{\sqrt{5}+1}{2}\Bigr)^{\text{deg}(f_{t,l})}
\|f_{t,l}\|_1\\
&\le t2^{t+1}\text{env}(\calB_t)\\
&\le t2^{t+1}(k+1)^A.
\end{align*}
Thus $\text{Env}(\calB)\le t2^{t+1}(k+1)^A$, and $\calB$ is algebraic Freiman 
$\bfP$-isomorphic to $\calA$ with $d(\calB)\le t^A$. This completes the proof of the 
theorem.
\end{proof}

\section{Densifications of sets} We turn next to a discussion of the densification idea to 
which we alluded in the introduction. We begin with an analogue of the Freiman 
$\bfP$-isomorphism defined in Definition \ref{definition2.1} suitable for the discussion of 
cartesian products. In this context, when $P_1,\ldots ,P_r\in \dbZ[x_1,\ldots ,x_s]$ and 
$\calC \subset \dbZ$, we again write
\[
S(\calC;\bfP)=\{ \bfx\in \calC^s:\text{$P_i(\bfx)=0$ $(1\le i\le r)$}\}.
\]
Also, when $\bfx_1,\ldots ,\bfx_t\in \calC^s$, we have in mind the notational convention 
that
\[
\bfx_i=(x_{i1},\ldots ,x_{is}).
\]
Then, when $(\bfx_1,\ldots ,\bfx_t)\in \calC^s\times \ldots \times \calC^s$, it is convenient 
to abbreviate the $t$-tuple $(x_{1j},x_{2j},\ldots ,x_{tj})$ as $\bfx^{(j)}$.

\begin{definition}\label{definition6.1} Let $t\in \dbN$, and suppose that $\calA$ and 
$\calB$ are finite sets of integers with $|\calB|=|\calA|^t$. Suppose in addition that 
$P_1,\ldots ,P_r\in \dbZ[x_1,\ldots ,x_s]$. We say that a bijection 
$\ome:\calA^t\rightarrow \calB$ is a $t$-fold Freiman $\bfP$-isomorphism (from $\calA$ to 
$\calB$) if it is the case that
\[
(\bfx_1,\ldots ,\bfx_t)\in S(\calA;\bfP)^t
\]
if and only if
\[
\left(\ome(\bfx^{(1)}),\ldots ,\ome(\bfx^{(s)})\right)\in S(\calB ;\bfP).
\]
\end{definition}

As in the discussion of \S2, we emphasise that a $t$-fold Freiman 
$\bfP$-isomorphism is specific to a particular polynomial tuple $\bfP$, and maps a $t$-tuple 
of integers to an integer. This once again permits an iterative approach in which $t$-fold 
Freiman $\bfP$-isomorphisms are successively composed in the natural manner.\par

It may be useful to highlight the utility of such a definition. When $t>1$, the structure of 
the solutions of the system of polynomials
\begin{equation}\label{6.1}
P_i(\bfx)=0\quad (1\le i\le r),
\end{equation}
with $\bfx\in \calA^s$, both determines and is determined by
\[
S(\calA;\bfP)^t=S(\calA;\bfP)\times \ldots \times S(\calA;\bfP).
\]
When $\ome:\calA^t\rightarrow \calB$ is a $t$-fold Freiman $\bfP$-isomorphism, it 
follows from Definition \ref{definition6.1} that $S(\calA;\bfP)^t$ is in bijective 
correspondence with $S(\ome(\calA^t);\bfP)=S(\calB;\bfP)$. Thus, the structure of the 
solutions of the system (\ref{6.1}) with $\bfx\in \calA^s$ both determines and is 
determined by the structure of the solutions of the system (\ref{6.1}) with 
$\bfx\in \calB^s$. A particularly simple consequence of this observation is that, just as 
$|\calB|=|\calA|^t$, so too one has
\[
|S(\calB;\bfP)|=|S(\calA;\bfP)|^t.
\]
Provided that $\text{env}(\calB)$ is not too much larger than $\text{env}(\calA)$, then the 
solution set $S(\calA;\bfP)$ of a sparse set $\calA$ may be understood precisely in terms 
of a potentially denser set $\calB$ and its solution set $S(\calB;\bfP)$. This motivates the 
next definition.

\begin{definition}\label{definition6.2} We say that a mapping 
$\ome:\calA^t\rightarrow \calD$ is a {\it $t$-fold $\bfP$-densifier of $\calA$} if it is a 
$t$-fold Freiman $\bfP$-isomorphism having the property that 
$\text{env}(\calD)\le \text{env}(\calA)^t$. When the latter inequality is strict, we refer to 
$\ome$ as a strict $t$-fold $\bfP$-densifier of $\calA$. In either case, we refer to $\calD$ 
as being a $t$-fold $\bfP$-densification of $\calA$.
\end{definition}

Of particular interest are the $t$-fold $\bfP$-densifications $\calD_t$ of $\calA$ 
distinguished by the property that
\[
\frac{\log \text{env}(\calD_t)}{\log |\calD_t|}
\]
is particularly small.

\begin{definition}\label{definition6.3} Let $\calA$ be a finite set of integers, and suppose 
that $P_1,\ldots ,P_r\in \dbZ[x_1,\ldots ,x_s]$. We say that the set $\calA$ has 
{\it $\bfP$-densification exponent $\kap$} when
\[
\kap =\liminf_{t\rightarrow \infty}\left\{ \frac{\log \text{env}(\calD_t)}{\log |\calD_t|} :
\text{$\calD_t$ is a $t$-fold densification of $\calA$}\right\} .
\]
\end{definition}

It follows that when $\calA$ has finite $\bfP$-densification exponent $\kap$, then for each 
$\eps>0$ there is a natural number $t$ and a $t$-fold $\bfP$-densification $\calD_t$ of 
$\calA$ such that
\[
\text{env}(\calD_t)\le |\calD_t|^{\kap+\eps}.
\]
Suppose that, in addition, one has an estimate of the shape
\[
|S(\calB;\bfP)|\ll \text{env}(\calB)^\eps |\calB|^\tet,
\]
valid for all finite sets of integers $\calB$. Then we may infer that
\[
|S(\calA;\bfP)|^t=|S(\calD_t;\bfP)|\ll \text{env}(\calD_t)^\eps |\calD_t|^\tet 
<|\calD_t|^{\tet+2\kap \eps}\ll |\calA|^{t(\tet+2\kap\eps)},
\]
whence
\[
|S(\calA;\bfP)|\ll |\calA|^{\tet+2\kap\eps}.
\]
In this way, it should be apparent that the existence of finite $\bfP$-densification exponents 
would lead from conclusions such as Theorem \ref{theorem1.1} to the validity of 
conjectures of the shape of that recorded in Conjecture \ref{conjecture1.2}. We shall see 
in the next section that, while such objectives are attainable for linear systems $\bfP$, it 
would seem that for systems $\bfP$ of higher degree, currently accessible conclusions are 
necessarily weaker.

\section{Densifications for linear systems of equations} The polynomial systems most 
amenable to densification via the circle of ideas already presented in \S3 are systems of 
homogeneous linear equations. Since the results concerning such systems are both simple 
and instructive, we expend the bulk of this section on their analysis. In order to fix ideas, 
suppose that $s\ge 2$, $r\ge 1$ and for $1\le i\le r$ one has $c_{ij}\in \dbZ$ 
$(1\le j\le s)$. We again ignore the trivial situation in which for some index $i$ one has 
$c_{ij}=0$ for $1\le j\le s$. The system of polynomials initially of interest to us in this 
section is
\[
P_i(\bfx)=\sum_{j=1}^sc_{ij}x_j\quad (1\le i\le r).
\]
Next, when $\calA\subset \dbZ$ is a finite set of integers, we recall the notation of writing 
$S(\calA;\bfP)$ for the set of solutions of the system of equations $P_i(\bfx)=0$ 
$(1\le i\le r)$, with $\bfx\in \calA^s$. In accordance with the treatment of \S3, we define 
$\Lam=\Lam(\bfc)$ by putting
\[
\Lam=\max_{1\le i\le r}\sum_{j=1}^s|c_{ij}|.
\]

\begin{theorem}\label{theorem7.1} Consider a system $\bfP$ of linear polynomials as 
described in the preamble, and consider a finite set of integers $\calA$. Then provided that 
$A=\text{\rm card}(\calA)$ is sufficiently large in terms of $r$ and $s$, the set $\calA$ has 
a finite $\bfP$-densification exponent $\kap$ satisfying $\kap\le s$. In particular, whenever 
$\eps>0$, there exists a natural number $t$ and a $t$-fold Freiman $\bfP$-isomorphism 
$\ome:\calA^t\rightarrow \calD$ having the property that 
$\text{\rm env}(\calD)\le \text{\rm card}(\calD)^{s+\eps}$.
\end{theorem}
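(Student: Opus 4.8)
The plan is the following. Since the conclusion is of interest only when $\text{env}(\calA)$ is large, a first application of Theorem \ref{theorem3.2} lets us replace $\calA$ by a Freiman $\bfP$-isomorphic set $\calC$ with $\text{env}(\calC)\le (\Lam+1)^A=:M$, a quantity depending only on $A$ and $\Lam$. As $t$-fold Freiman $\bfP$-isomorphisms may be composed with the componentwise Freiman $\bfP$-isomorphism $\calA\rightarrow \calC$, it suffices to exhibit, for each $\eps>0$ and all sufficiently large $t$, a $t$-fold Freiman $\bfP$-isomorphism $\ome:\calC^t\rightarrow \calD$ with $\text{env}(\calD)\le A^{t(s+\eps)}=\text{card}(\calD)^{s+\eps}$; this will at once yield the final assertion, and it will yield $\kap\le s$ because when $\text{env}(\calA)\ge A^{s+\eps}$ the bound $\text{env}(\calD)\le A^{t(s+\eps)}\le \text{env}(\calA)^t$ shows $\ome$ to be a genuine $t$-fold $\bfP$-densifier of ratio at most $s+\eps$.

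We look for $\ome$ of the additive shape $\ome(c_1,\ldots ,c_t)=\lam_1c_1+\ldots +\lam_tc_t$ with positive integer weights $\lam_1,\ldots ,\lam_t$ to be chosen. For such a map, linearity of the $P_i$ gives, for $\bfx_1,\ldots ,\bfx_t\in \calC^s$,
\[
P_i\bigl( \ome(\bfx^{(1)}),\ldots ,\ome(\bfx^{(s)})\bigr)=\sum_{m=1}^t\lam_mP_i(\bfx_m)\quad (1\le i\le r).
\]
Hence $\ome$ is a $t$-fold Freiman $\bfP$-isomorphism from $\calC$ onto $\calD=\ome(\calC^t)$ as soon as $(\lam_1,\ldots ,\lam_t)$ is \emph{dissociated} with respect to each of the finite sets $W_0=\calC-\calC$ and $W_i=\{P_i(\bfa):\bfa\in \calC^s\}$ $(1\le i\le r)$, meaning that any relation $\sum_m\lam_mw_m=0$ with every $w_m$ drawn from the set in question forces all $w_m=0$: dissociation with respect to $W_0$ secures injectivity of $\ome$, while dissociation with respect to $W_i$ ensures that a vanishing image value of $P_i$ forces $P_i(\bfx_m)=0$ for all $m$.

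The crucial observation is that since $s\ge 2$, each of $W_0,W_1,\ldots ,W_r$ has at most $A^s$ elements. Choose $\lam_1,\ldots ,\lam_t$ independently and uniformly at random from $\{1,2,\ldots ,L\}$ with $L=\lceil 2(r+1)A^{st}\rceil$. For a fixed nonzero tuple $(w_1,\ldots ,w_t)$ from some $W_j^t$, picking an index $m_0$ with $w_{m_0}\ne 0$ and conditioning on the remaining weights shows $\Pr[\sum_m\lam_mw_m=0]\le 1/L$; as there are at most $(r+1)A^{st}$ such tuples over all $j$, a union bound shows that with probability at least $\tfrac12$ the tuple $(\lam_1,\ldots ,\lam_t)$ is simultaneously dissociated with respect to $W_0,W_1,\ldots ,W_r$. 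Fix such a tuple. Since $|c_m|<\text{env}(\calC)\le M$ for $c_m\in \calC$, we obtain $\text{env}(\calD)\le M\sum_m\lam_m+1\le 2(r+1)tMA^{st}+1$. Because $M$ and $r$ depend only on $A$, $s$ and $\Lam$, for all $t$ sufficiently large in terms of $\eps,A,s,r,\Lam$ the right-hand side is at most $A^{t\eps}A^{st}=A^{t(s+\eps)}$. Composing $\ome$ with the Freiman $\bfP$-isomorphism $\calA\rightarrow \calC$ then completes the argument.

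The one genuinely new idea, and the step I expect to be the crux, is the choice of the weights $\lam_m$. The obvious geometric choice $\lam_m=g^{m-1}$ requires the base $g$ to exceed the \emph{diameter} of the sets $W_j$, which is comparable to $\text{env}(\calC)\asymp M=(\Lam+1)^A$; that only gives $\text{env}(\calD)$ of size roughly $M^t$ and hence an exponent near $A\log(\Lam+1)/\log A$, far larger than $s$. What rescues the argument is that the $W_j$, although of possibly enormous diameter, contain only $O(A^s)$ elements, so that a merely \emph{random} weight vector of height $A^{s+o(1)}$ already avoids every one of the at most $A^{st}$ obstructing linear relations; it is this replacement of "diameter" by "cardinality" that brings the densification exponent down to $s$. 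The remaining points — the initial condensation, the composition of $t$-fold Freiman $\bfP$-isomorphisms, the verification that $\ome$ indeed satisfies Definition \ref{definition6.1}, and the precise dependence of $t$ on the parameters — are routine but should be carried out with some care.
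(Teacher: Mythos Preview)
Your argument is correct and establishes the substantive content of the theorem, but it proceeds along a genuinely different route from the paper. The paper does not invoke Theorem~\ref{theorem3.2} at all; instead it runs an \emph{iterative} densification directly on $\calA$. At each stage one has a set $\calD$ with $|\calD|=D$ and $\text{env}(\calD)=X+1$, and one chooses $D$ primes $\pi_1,\ldots ,\pi_D$ of size roughly $D^s\log X$, none of which divide the product $\Ups$ recording all nonzero differences and nonzero values $P_i(\bfd)$. The map
\[
\ome(\bfd)=\sum_{i=1}^D d_i\prod_{j\ne i}\pi_j
\]
is then shown, via reduction modulo each $\pi_k$, to be a $D$-fold Freiman $\bfP$-isomorphism, and a short computation gives a strict improvement of the ratio $\log\text{env}/\log\text{card}$ whenever that ratio exceeds $s(1+4\eps)$. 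Iterating drives the ratio down to $s+O(\eps)$.

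Your approach trades the CRT construction for a probabilistic one: after an initial condensation that caps $\text{env}(\calC)$ at $(\Lam+1)^A$, you pick random weights $\lam_m$ of height about $A^{st}$ and observe that only $O(A^{st})$ bad linear relations need to be avoided, because each value set $W_j$ has at most $A^s$ elements regardless of its diameter. This is a cleaner one-shot argument; the paper's iteration is explicit and avoids appealing to Theorem~\ref{theorem3.2}, but both ultimately rest on the same combinatorial point that cardinality, not diameter, governs the number of obstructions. One small remark: the densifier condition $\text{env}(\calD)\le \text{env}(\calA)^t$ in your argument is only verified when $\text{env}(\calA)\ge A^{s+\eps}$; the complementary case is trivial for the ``In particular'' clause, and the paper's proof is equally informal about it, so no real harm is done.
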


\begin{proof} Fix a small positive number $\eps$. We seek to apply an iterative strategy 
that, given a set $\calD$ that is $t$-fold Freiman $\bfP$-isomorphic to $\calA$, generates a 
new set $\calD'$ that is $t'$-fold Freiman $\bfP$-isomorphic to $\calD$ and satisfies
\begin{equation}\label{7.1}
\frac{\log \text{env}(\calD')}{\log \text{card}(\calD')}\le (1-\eps)
\frac{\log \text{env}(\calD)}{\log \text{card}(\calD)}.
\end{equation}
Notice that the composition of a $t$-fold Freiman $\bfP$-isomorphism from $\calA$ to 
$\calD$, and a $t'$-fold Freiman $\bfP$-isomorphism from $\calD$ to $\calD'$, gives a 
$tt'$-fold Freiman $\bfP$-isomorphism from $\calA$ to $\calD'$. Thus, the relation 
(\ref{7.1}) suggests an improvement in the densification exponent. Provided that we are 
able to iterate this process sufficiently many times, we find that a $\bfP$-densification 
$\calD$ of $\calA$ exists with
\[
\frac{\log \text{env}(\calD)}{\log \text{card}(\calD)}\le (1-\eps)^n
\frac{\log \text{env}(\calA)}{\log \text{card}(\calA)},
\]
with $n$ as large as is necessary. It transpires that when $\text{env}(\calD)>
\text{card}(\calD)^{s+\eps}$, further iteration is possible, and in this way we see that the 
$\bfP$-densification exponent of $\calA$ is at most $s$.\par

We now initiate the proof proper. We may suppose without loss of generality that 
$\Lam\ge 2$ and $s\ge 2$. We consider a finite set of integers $\calD$ that is $t$-fold 
Freiman $\bfP$-isomorphic to $\calA$, so that $|\calD|=A^t\ge A$. Write $D=|\calD|$ and 
$X=\text{env}(\calD)-1$. If one were to have
\[
X+1\le D^{s(1+4\eps)},
\]
then the desired conclusion would follow, since $\eps>0$ may be taken arbitrarily small. 
We may therefore suppose that $X+1>D^{s(1+4\eps)}$.\par

Next, in accordance with (\ref{3.1}), we define the natural number $\Ups$ by putting
\[
\Ups=\Biggl( \prod_{\substack{d_1,d_2\in \calD \\ d_1\ne d_2}}|d_i-d_j|\Biggr) \Biggl( 
\prod_{\substack{\bfd\in \calD^s\\ \bfd\not\in S(\calD;\bfP)}}\sum_{i=1}^r|P_i(\bfd)|
\Biggr) .
\]
Then one finds that
\[
1\le \Ups\le (2X)^{D^2}(r\Lam X)^{D^s}\le \tfrac{1}{3}(r\Lam X)^{2D^s}.
\]
Note that
\[
2\log (3\Ups)\le 4D^s\log (r\Lam X).
\]
Then provided that $Y\ge 4D^s\log (r\Lam X)$, it follows from the prime number theorem 
that in any interval $(Y,2Y)$, there exist at least $D$ prime numbers $\pi$ with 
$\pi\nmid \Ups$. Let $\pi_1,\ldots ,\pi_D$ be any $D$ such distinct prime numbers.\par

We next construct a map $\ome:\calD^D\rightarrow \dbZ$ as follows. When 
$\bfd=(d_1,\ldots ,d_D)\in \calD^D$, we define
\begin{equation}\label{7.2}
\ome(\bfd)=\sum_{i=1}^Dd_i\prod_{\substack{1\le j\le D\\ j\ne i}}\pi_j.
\end{equation}
Write $\calE=\ome(\calD^D)$. Then we claim that the mapping 
$\ome:\calD^D\rightarrow \calE$ is a $D$-fold Freiman $\bfP$-isomorphism from $\calD$ to 
$\calE$.\par

We first verify that $\ome:\calD^D\rightarrow \calE$ is a bijection, and for this it suffices 
to check that $\ome$ is injective. However, if $\bfd,\bfd'\in \calD^D$ and 
$\ome(\bfd)=\ome(\bfd')$, then it is apparent that
\[
\sum_{i=1}^Dd_i\prod_{\substack{1\le j\le D\\ j\ne i}}\pi_j\equiv \sum_{i=1}^Dd'_i
\prod_{\substack{1\le j\le D\\ j\ne i}}\pi_j\mmod{\pi_k}\quad (1\le k\le D),
\]
whence
\[
(d_k-d'_k)\prod_{\substack{1\le j\le D\\ j\ne k}}\pi_j\equiv 0\mmod{\pi_k}\quad 
(1\le k\le D).
\]
For each index $k$, however, one has
\[
\Biggl(\pi_k, \prod_{\substack{1\le j\le D\\ j\ne k}}\pi_j\Biggr) =1,
\]
and thus we deduce that $d_k\equiv d'_k\mmod{\pi_k}$ $(1\le k\le D)$. Recalling the 
definition of $\Ups$, however, one sees that $\pi_k\nmid (d_k-d'_k)$ whenever 
$d_k\ne d'_k$, and so we must have $d_k=d'_k$ $(1\le k\le D)$. In this way, we conclude 
that $\bfd=\bfd'$, whence $\ome: \calD^D\rightarrow \calE$ is indeed bijective.\par

Next, whenever $(\bfd_1,\ldots ,\bfd_D)\in S(\calD;\bfP)^D$, the linearity of the 
polynomials $\bfP$ ensures that for $1\le l\le r$, one has
\[
P_l\left(\ome(\bfd^{(1)}),\ldots ,\ome(\bfd^{(s)})\right) =\sum_{i=1}^D
P_l(d_{i1},\ldots ,d_{is})\prod_{\substack{1\le j\le D\\ j\ne i}}\pi_j=0.
\]
Thus $\left(\ome(\bfd^{(1)}),\ldots ,\ome(\bfd^{(s)})\right) \in S(\calE;\bfP)$. Also, when 
$(\bfd_1,\ldots ,\bfd_D)\not\in S(\calD;\bfP)^D$, then for some index $l$ with 
$1\le l\le r$, and some index $k$ with $1\le k\le D$, one has
\[
P_l(d_{k1},\ldots ,d_{ks})\ne 0.
\]
Meanwhile, if one were to have
\begin{equation}\label{7.3}
P_l\left(\ome(\bfd^{(1)}),\ldots ,\ome(\bfd^{(s)})\right)=0\quad (1\le l\le r),
\end{equation}
then in particular,
\[
\sum_{i=1}^DP_l(d_{i1},\ldots ,d_{is})\prod_{\substack{1\le j\le D\\ j\ne i}}\pi_j
\equiv 0\mmod{\pi_k}\quad (1\le l\le r).
\]
The latter congruences imply that
\[
P_l(d_{k1},\ldots ,d_{ks})\prod_{\substack{1\le j\le D\\ j\ne k}}\pi_j\equiv 0
\mmod{\pi_k}\quad (1\le l\le r),
\]
whence
\begin{equation}\label{7.4}
P_l(d_{k1},\ldots ,d_{ks})\equiv 0\mmod{\pi_k}\quad (1\le l\le r).
\end{equation}
But the definition of $\Ups$ ensures that when $\bfd_k\not \in S(\calD;\bfP)$, as we may 
assume, then
\[
\sum_{l=1}^r|P_l(\bfd_k)|\not\equiv 0\mmod{\pi_k}.
\]
Thus we have $P_l(\bfd_k)\not\equiv 0\mmod{\pi_k}$ for some index $l$ with 
$1\le l\le r$, and this contradicts the relation (\ref{7.4}). We therefore conclude that 
(\ref{7.3}) cannot hold. In consequence, when 
$(\bfd_1,\ldots ,\bfd_D)\not\in S(\calD;\bfP)^D$, one must have
\[
\left( \ome(\bfd^{(1)}),\ldots ,\ome(\bfd^{(s)})\right)\not\in S(\calE;\bfP).
\]
We have thus shown that $\ome:\calD^D\rightarrow \calE$ is a $D$-fold Freiman 
$\bfP$-isomorphism.\par

We next investigate the $\bfP$-densification exponent associated with the mapping 
$\ome:\calD^D\rightarrow \calE$. Observe first that the definition (\ref{7.2}) shows that
\[
\text{env}(\calE)\le D(2Y)^{D-1}\max_{1\le i\le D}|d_i|\le D(2Y)^{D-1}\text{env}(\calD).
\]
We take $Y=4D^s\log (r\Lam X)$, in which we recall that $X=\text{env}(\calD)-1$. Thus
\begin{align*}
\frac{\log \text{env}(\calE)}{\log |\calE|}&\le 
\frac{\log \text{env}(\calD)+\log D+(D-1)\log (2Y)}{D\log D}\\
&=\frac{1}{D}\left( \frac{\log \text{env}(\calD)}{\log D}\right) +\left( 1-\frac{1}{D}
\right) \left( \frac{\log (2Y)}{\log D}\right) +\frac{1}{D}.
\end{align*}
It follows that whenever
\begin{equation}\label{7.5}
\frac{\log (2Y)}{\log D}\le (1-2\eps)\frac{\log \text{env}(\calD)}{\log D},
\end{equation}
then one has
\begin{equation}\label{7.6}
\frac{\log \text{env}(\calE)}{\log |\calE|}\le 
(1-\eps)\frac{\log \text{env}(\calD)}{\log |\calD|}.
\end{equation}
This is the improving $\bfP$-densification argument outlined in the opening discussion of 
the proof.\par

Let us return to examine the condition (\ref{7.5}). This condition is satisfied provided that
\[
2Y\le (\text{env}(\calD))^{1-2\eps}=(X+1)^{1-2\eps},
\]
which is to say that
\[
8D^s\log (r\Lam X)\le (X+1)^{1-2\eps}.
\]
However, in the opening discussion of the proof, we were at liberty to suppose that 
$X+1>D^{s(1+4\eps)}$. Thus we have
 \[
\frac{(X+1)^{1-2\eps}}{\log (r\Lam X)}>D^{s(1+\eps)}>8D^s,
\]
and in consequence the condition (\ref{7.5}) is fulfilled. This justifies the conclusion 
(\ref{7.6}).\par

As we explained in the opening discussion of the proof, the upper bound (\ref{7.6}) 
permits an iterative approach to be employed that delivers a $t$-fold $\bfP$-densification 
$\calD$ of $\calA$ satisfying the property that
\begin{equation}\label{7.7}
\frac{\log \text{env}(\calD)}{\log \text{card}(\calD)}\le (1-\eps)^n
\frac{\log \text{env}(\calA)}{\log \text{card}(\calA)},
\end{equation}
with $n$ arbitarily large, provided only that 
$\text{env}(\calD)>\left( \text{card}(\calD)\right)^{s(1+4\eps)}$. Since for sufficiently 
large $n$, the bound (\ref{7.7}) contradicts the condition 
$\text{env}(\calD)>\left( \text{card}(\calD)\right)^{s(1+4\eps)}$, we are forced to 
conclude that such a $t$-fold $\bfP$-densification $\calD$ exists in which 
$\text{env}(\calD)\le \left( \text{card}(\calD)\right)^{s(1+4\eps)}$. By taking $\eps>0$ 
arbitrarily small, this shows that
\[
\liminf_{t\rightarrow \infty}\left\{ \frac{\log \text{env}(\calD_t)}{\log |\calD_t|}:
\text{$\calD_t$ is a $t$-fold $\bfP$-densification of $\calA$}\right\}\le s.
\]
This completes the proof of the theorem.
\end{proof}

The strategy underlying the proof of Theorem \ref{theorem7.1} can be generalised in 
some sense both to inhomogeneous systems, and also to systems of equations of degree 
exceeding $1$. In order to illustrate ideas, consider a system of homogeneous polynomials 
$P_i(\bfx)\in \dbZ[x_1,\ldots ,x_s]^r$, not necessarily linear. Suppose that these 
polynomials are of degree at most $k$, and that the sum of the absolute values of the 
coefficients in the polynomial $P_i(\bfx)$ is at most $\Lam$ for $1\le i\le r$. Let $\calD$ 
be a finite set of integers that is $t$-fold Freiman $\bfP$-isomorphic to $\calA$, and write 
$D=|\calD|$ and $X=\text{env}(\calD)$. Also, define the integer $\Ups$ now by putting
\[
\Ups=\Biggl( \prod_{\substack{d_1,d_2\in \calD \\ d_1\ne d_2}}|d_1-d_2|\Biggr) 
\Biggl( \prod_{i=1}^r\prod_{\substack{\bfd\in \calD^s\\ P_i(\bfd)\ne 0}}|P_i(\bfd)|\Biggr) .
\]
Then provided that $Y\ge 4rD^{2s}\log (\Lam X^k)$, it follows from the prime number 
theorem that in any interval $(Y,2Y)$, there exist at least $D$ prime numbers $\pi$ with 
$\pi\nmid \Ups$. Let $\pi_1,\ldots ,\pi_D$ be any $D$ such distinct prime numbers.\par

We again define a map $\ome:\calD^D\rightarrow \dbZ/(\pi_1\ldots \pi_D\dbZ)$ via 
(\ref{7.2}), and write $\calE=\ome (\calD^D)$. The map $\ome$ is a bijection from 
$\calD^D$ to $\calE$, just as in the analogous argument in the proof of Theorem 
\ref{theorem7.1}. We observe that for $1\le l\le r$, one has
\[
P_l\left(\ome(\bfd^{(1)}),\ldots ,\ome(\bfd^{(s)})\right) \equiv 
\sum_{i=1}^D P_l(d_{i1},\ldots ,d_{is})\Biggl( \prod_{\substack{1\le j\le D\\ j\ne i}}
\pi_j\Biggr)^{\text{deg}(P_l)}\mmod{\pi_1\ldots \pi_D}.
\]
If $(\bfd_1,\ldots ,\bfd_D)\not\in S(\calD;\bfP)^D$, then for some index $l$ with 
$1\le l\le r$, and some index $k$ with $1\le k\le D$, one has
\[
P_l(d_{k1},\ldots ,d_{ks})\ne 0.
\]
Since $\pi_j\nmid \Ups$ for $1\le j\le D$, one cannot have
\[
P_l(d_{k1},\ldots ,d_{ks})\equiv 0\mmod{\pi_k},
\]
and consequently
\[
P_l\left(\ome(\bfd^{(1)}),\ldots ,\ome (\bfd^{(s)})\right)\not\equiv 0
\mmod{\pi_1\ldots \pi_D}\quad (1\le l\le r).
\]
On the other hand, whenever $(\bfd_1,\ldots ,\bfd_D)\in S(\calD;\bfP)^D$, then for 
$1\le l\le r$ one must have
\[
P_l\left(\ome(\bfd^{(1)}),\ldots ,\ome(\bfd^{(s)})\right)\equiv 0\mmod{\pi_1\cdots \pi_D}.
\]
We thus perceive that the solution structure of $S(\calD;\bfP)^D$ is preserved by the map 
$\ome$ in a manner analogous to that in our discussion of densifications. One can now 
attempt to rectify the set $\ome(\calD^D)\subseteq \dbZ/(\pi_1\ldots \pi_D\dbZ)$ to obtain 
a new set $\calF\subset {\overline \dbQ}$ by means of the method of Grosu 
\cite{Gro2014}. In this way one perceives the possibility of a densification process for sets 
of algebraic numbers. However, in common with the method of Grosu, there is only weak 
control of the degree and other data associated with the field extension in which the 
elements of $\calF$ are embedded. This level of control would appear to be far too weak to 
facilitate useful densification conclusions.\par

We finish this section with some comments concerning the main conclusion of Theorem 
\ref{theorem7.1}. We are interested in understanding the set of solutions $S(\calA;\bfP)$ 
of a given system of polynomial equations $P_i(\bfx)=0\quad (1\le i\le r)$, with variables 
restricted to a set $\calA$. The conclusion of Theorem \ref{theorem7.1} shows that, in 
circumstances wherein the polynomials $P_i(\bfx)$ are both homogeneous and linear at 
least, this objective can be achieved by studying instead a related set of integers $\calD$ 
with $\text{env}(\calD)\le |\calD|^{s+\eps}$. While this polynomial dependence of 
$\text{env}(\calD)$ on $|\calD|$ may seem significantly superior to the exponential 
dependence available in the condensation results of \S3, one may interpret this 
nonetheless as a ``non-result''. If it is the case that $\calD$ is a typical set having roughly 
$X^{1/s-\eps}$ elements in a box of size $X$, then conventional heuristics suggest 
nothing more than that the number of solutions of the system $P_i(\bfx)=0$ 
$(1\le i\le r)$, with $\bfx\in \calD^s$, could be $O(1)$ or even $0$. In other words, the 
exponent $1/s$ is already small enough that in general little or nothing can be learned from 
the counting function for $|\calD\cap [1,X]|$ alone. Perhaps it is more illuminating to point 
out that more or less any solution behaviour can be encoded in a set $\calD$ for which 
$|\calD\cap [1,X]|\ll X^{1/s-\eps}$.

\section{Remarks on sets of real points} We now explore some consequences of work of 
Vu, Wood and Wood \cite[Theorem 1.1]{VWW2011}. Let $D$ be an integral domain of 
characteristic zero, such as the field of real numbers $\dbR$, and let $\calD$ be a finite 
subset of $D$. Consider a system of polynomials $P_i(\bfx)\in \dbZ[x_1,\ldots ,x_s]$ 
$(1\le i\le r)$. In this section, we are interested in the set $S(\calD;\bfP)$ of solutions 
$\bfx\in \calD^s$ of the simultaneous equations
\[
P_i(x_1,\ldots ,x_s)=0\quad (1\le i\le r).
\]
The structure of the solution set $S(\calD;\bfP)$ is determined by the hypergraph 
$\Gam(\calD;\bfP)$ defined just as in the analogous discussion of \S2.\par

Given a large prime number $p$, one may seek a ring homomorphism 
$\varphi_p:\dbZ[\calD]\rightarrow \dbF_p$ with the property that, whenever 
$(x_1,\ldots ,x_s)\in \calD^s$, then
\[
P_i(x_1,\ldots ,x_s)=0\quad (1\le i\le r)
\]
if and only if
\[
P_i(\varphi_p(x_1),\ldots ,\varphi_p(x_s))=0\quad (1\le i\le r).
\]
We emphasise here that the latter system of equations over $\dbF_p$ amount to a 
system of congruences. The conclusion of \cite[Theorem 1.1]{VWW2011} demonstrates 
that there exists an infinite sequence of primes with positive relative density having the 
property that such a ring homomorphism exists. This conclusion may not at first sight be 
obvious from \cite[Theorem 1.1]{VWW2011}. Of course, any ring homomorphism 
$\varphi_p:\dbZ[\calD]\rightarrow \dbF_p$ has the property that, whenever 
$(x_1,\ldots ,x_s)\in \calD^s$ satisfies $P_i(x_1,\ldots ,x_s)=0$ $(1\le i\le r)$, then 
\begin{equation}\label{8.1}
P_i(\varphi_p(x_1),\ldots ,\varphi_p(x_s))=\varphi_p(P_i(x_1,\ldots ,x_s))=\varphi_p(0)=0
\quad (1\le i\le r).
\end{equation}
Thus, the interesting feature for us is that whenever
\[
(x_1,\ldots ,x_s)\in \calD^s\quad \text{and}\quad P_i(x_1,\ldots ,x_s)\ne 0
\]
for some index $i$ with $1\le i\le r$, then
\[
P_i(\varphi_p(x_1),\ldots ,\varphi_p(x_s))=\varphi_p(P_i(x_1,\ldots ,x_s))\ne 0.
\]
The approach here is to define a set $L$ of all elements
\[
P_i(x_1,\ldots ,x_s)\in \dbZ[\calD],
\]
with $\bfx\in \calD^s$, having the property that $P(x_1,\ldots ,x_s)\ne 0$. The conclusion 
of \cite[Theorem 1.1]{VWW2011} guarantees that the ring homomorphisms $\varphi_p$, 
whose existence is asserted, may be constructed in such a manner that 
$0\not\in \varphi_p(L)$. This last assertion guarantees that the condition (\ref{8.1}) holds, 
and this ensures that the sought after ring homomorphisms $\varphi_p$ do indeed exist.
\par

Equipped with these ring homomorphisms $\varphi_p:\dbZ[\calD]\rightarrow \dbF_p$, we 
see that $\Gam (\calD;\bfP)$ is isomorphic as a hypergraph to 
$\Gam(\varphi_p(\calD);\bfP)$. Thus, the solution structure of $S(\calD;\bfP)$ may be 
faithfully embedded into appropriate finite fields $\dbF_p$. If the prime number $p$ has 
been chosen sufficiently large, then one may apply \cite[Theorem 1.3]{Gro2014} to 
obtain a faithful model of the finite field solution structure inside a number field $K$ 
with degree at most $\exp(\exp(c_\bfP |\calD|))$, for a suitable real number $c_\bfP$ 
depending at most on $\bfP$. For systems of linear equations, moreover, one can restrict 
to an integer model. In this way, one sees that linear problems involving sets of real 
points, for example, may be considered instead as linear problems involving sets of 
integers. For non-linear polynomial problems, we must instead work with sets of algebraic 
numbers of bounded algebraic enveloping radius. In both settings, the condensation and 
densification ideas of this paper become applicable.

\section{The conclusion of Theorem \ref{theorem1.1}}
As promised in the introduction, we briefly justify the conclusion of Theorem 
\ref{theorem1.1}. Suppose that $\calA\subset \dbZ$ is finite with $A=\text{card}(\calA)$, 
and define
\[
\gra_n=\begin{cases}1,&\text{when $n\in \calA$},\\
0,&\text{when $n\not\in \calA$}.\end{cases}
\]
Suppose first that $\varphi_j\in \dbZ[t]$ $(1\le j\le k)$ is a system of polynomials with 
\[
\text{\rm det}\biggl( \frac{{\rm d}^i\varphi_j(t)}{{\rm d}t^i}\biggr)_{1\le i,j\le k}\ne 0.
\]
Let $s$ and $k$ be natural numbers with $s\le k(k+1)/2$. Then for each $\eps>0$, the 
conclusion of \cite[Theorem 1.1]{Woo2019} shows that
\begin{align*}
\int_{[0,1)^k}\biggl| \sum_{|n|\le X}\gra_ne(\alp_1\varphi_1(n)+\ldots 
+\alp_k\varphi_k(n))\biggl|^{2s}\d\bfalp
&\ll X^\eps \biggl( \sum_{|n|\le X}|\gra_n|^2\biggr)^s\\
&\ll X^\eps A^s.
\end{align*}
Since for each $n\in \calA$, one has $|n|\le \text{env}(\calA)$, the first conclusion of 
Theorem \ref{theorem1.1} follows on setting $X=\text{env}(\calA)$.\par

The second conclusion of Theorem \ref{theorem1.1} follows on making use of the 
translation invariance property of the system of equations
\[
x_1^j+\ldots +x_s^j=x_{s+1}^j+\ldots +x_{2s}^j\quad (1\le j\le k).
\]
Put $m=\min \calA$, and observe that whenever $\bfx\in \calA^{2s}$ satisfies this system 
of equations, then as a consequence of the binomial theorem, one has
\[
(x_1-m)^j+\ldots +(x_s-m)^j=(x_{s+1}-m)^j+\ldots +(x_{2s}-m)^j\quad (1\le j\le k).
\]
Thus, if we put $\calB=\{a-m:a\in \calA\}$, then we have $J_{s,k}(\calA)=J_{s,k}(\calB)$. 
We therefore deduce from the special case $\varphi_j(t)=t^j$ $(1\le j\le k)$ of the first 
part of the theorem that
\[
J_{s,k}(\calA)\le (\text{env}(\calB))^\eps A^s=
(\max(\calA)-\min(\calA)+1)^\eps A^s=(\text{diam}(\calA))^\eps A^s.
\]
The second conclusion of Theorem \ref{theorem1.1} follows when $s\le k(k+1)/2$. When 
instead $s>k(k+1)/2$, we observe that a trivial estimate combines with orthogonality to 
show that
\begin{align*}
J_{s,k}(\calA)&\le A^{2s-k(k+1)}\int_{[0,1)^k}
\biggl| \sum_{|n|\le X}\gra_ne(\alp_1n+\ldots +\alp_kn^k)\biggl|^{k(k+1)}\d\bfalp \\
&\ll A^{2s-k(k+1)}\cdot (\text{diam}(\calA))^\eps A^{k(k+1)/2}.
\end{align*}
The desired conclusion is now immediate in this case, since $A^{2s-k(k+1)/2}>A^s$.

\bibliographystyle{amsbracket}
\providecommand{\bysame}{\leavevmode\hbox to3em{\hrulefill}\thinspace}

\end{document}